\documentclass[11pt,reqno]{amsart}
\usepackage{xifthen,setspace,bm}
\usepackage[utf8]{inputenc} 
\usepackage[T1]{fontenc} 
   \topmargin=0in
   \oddsidemargin=0in
   \evensidemargin=0in
   \textwidth=6.5in
   \textheight=8.5in
\usepackage{pkgfile}
\usepackage{cleveref}
\newcommand{\e}{\varepsilon}

\allowdisplaybreaks

\setcounter{tocdepth}{1}
\title[Large Deviations of the Spectral Edge in Sparse Random Graphs]{Spectral Edge in Sparse Random Graphs: Upper and Lower Tail Large Deviations}

\author[Bhattacharya]{Bhaswar B. Bhattacharya}
\address{B.\ B.\ Bhattacharya\hfill\break
	Department of Statistics\\ University of Pennsylvania\\ Philadelphia, PA 19104, USA.}
\email{bhaswar@wharton.upenn.edu}

\author[Bhattacharya]{Sohom Bhattacharya}
\address{S. \ Bhattacharya\hfill\break
	Department of Statistics\\ Stanford University\\ California, CA 94305, USA.}
\email{sohomb@stanford.edu}

\author[Ganguly]{Shirshendu Ganguly}
\address{S. Ganguly \hfill\break
	Department of Statistics\\ UC Berkeley \\ 
	Berkeley, California, CA 94720, USA.}
\email{sganguly@berkeley.edu}

\begin{document}

\maketitle

\begin{abstract} In this paper we consider the problem of estimating the joint upper and lower tail large deviations of the edge eigenvalues of an Erd\H{o}s-R\'enyi random graph $\cG_{n,p}$,
in the regime of $p$ where the edge of the spectrum is no longer governed by global observables, such as the number of edges, but rather by localized statistics, such as high degree vertices. Going beyond the recent developments in mean-field approximations of related problems, this paper provides a comprehensive treatment of the large deviations of the spectral edge in this entire regime, which notably includes the well studied case of constant average degree. In particular, for $r \geq 1$ fixed, we pin down the asymptotic probability that the top $r$ eigenvalues are jointly greater/less than their typical values by multiplicative factors bigger/smaller than $1$, in the regime mentioned above. The proof for the upper tail relies on a novel structure theorem, obtained by building on estimates in  \cite{KS}, followed by an iterative cycle removal process, which shows, conditional on the upper tail large deviation event, with high probability the graph admits a decomposition in to a disjoint union of stars and a spectrally negligible part. On the other hand, the key ingredient in the proof of the lower tail is a Ramsey-type result which shows that if the $K$-th largest degree of a graph is not atypically small (for some large $K$ depending on $r$), then either the top eigenvalue or the $r$-th largest eigenvalue is larger than that allowed by the lower tail event on the top $r$ eigenvalues, thus forcing a contradiction. The above arguments reduce the problems to developing a large deviation theory for the extremal degrees which could be of independent interest.
\end{abstract}


\section{Introduction}

Understanding asymptotic properties of spectral statistics arising from random matrices has been the subject of intense study in recent years. Within this theme, a particularly important research direction is in deriving large deviation principles (rare event  probabilities) for spectral functionals, such as the empirical spectral measure and the extreme eigenvalues. For Gaussian models, 
the large deviation of the spectral measure was derived in the seminal work \cite{freeentropy} and the analogous problem for the spectral norm was studied in \cite{aging}. For results in the related setting of Gaussian covariance or Wishart matrices see \cite{satya1,satya2}. However, the non-Gaussian cases presented significant new challenges. Among the relatively fewer results, major breakthroughs include Bordenave and Caputo's study LDP for empirical spectral measure for, Wigner matrices with stretch exponential tails in \cite{bordenave1}. The corresponding results for the spectral norm were obtained in \cite{augeri2}. Very recently, Guionnet and Husson \cite{guionnet1} established an LDP for the
largest eigenvalue of Rademacher matrices and more generally sub-Gaussian Wigner matrices. 

Another natural class of random matrix models are those arising from random graphs, which are also widely studied and have witnessed a slew of applications in a range of different fields. Recent developments in spectral graph theory and theoretical computer science, relating geometric properties of graphs to their spectrum, have made the study of the spectral properties of these random matrices interesting and important. Particularly noteworthy developments include, among others, establishing refined bulk and edge universality properties dictated by Gaussian ensembles, for the spectrum of random graphs of average degree at least logarithmic in the graph size  (cf. \cite{yau, erdHos2013spectral} and the references therein). For sparser graphs, which includes the case of constant average degree, however, there have been much fewer results, notwithstanding the breakthroughs in \cite{edge_knowles, bbk_dense, bbk, KS} which looked at the edge of the spectrum. Also relevant are the beautiful results of \cite{bordenave2017mean} and \cite{empirical_neighborhood}, which studied continuity properties of the limiting spectral measure and developed a large deviation theory of the related local limits, respectively.  

In this paper we will study the upper and lower tail large deviations of the extreme eigenvalues of the adjacency matrix of the sparse Erd\H os-R\'enyi  random graph in the regime where they  are governed by localized statistics, such as high degree vertices.  To this end, let $\cG_{n,p}$ denote the Erd\H os-R\'enyi random graph on $n$ vertices with edge probability $p \in (0, 1)$, and $\lambda_1(\cG_{n,p}) \geq \lambda_2(\cG_{n,p}) \geq \cdots \geq \lambda_n(\cG_{n,p})$ be the eigenvalues, arranged in non-increasing order. The typical behaviors of  the edge eigenvalues of $\cG_{n,p}$ are, in general, well-understood \cite{bbk,eigenvalues_random_matrices,KS,vu_spectral_norm}. In particular, the breakthrough result of Krivelevich and Sudakov \cite[Theorem 1.1]{KS} shows that, for any $p \in (0, 1)$, with high probability
\begin{align}\label{eq:lambda1_expectation}
\lambda_1(\cG_{n, p})= (1+o(1)) \max \left\{\sqrt{d_{(1)} (\cG_{n, p})}, np\right\},\
\end{align}
where $d_{(1)}(\cG_{n, p})$ is the maximum degree of the graph $\cG_{n, p}$. This dichotomy has several consequences as well in the behavior of eigenvectors, as elaborated later. This result has been substantially generalized to other edge eigenvalues and to inhomogeneous random graph models by Benaych-Georges, Bordenave,  and Knowles \cite{bbk_dense,bbk}. General concentration inequalities for the extreme eigenvalues of $\cG_{n, p}$ are also well-known \cite{AKV} (see \cite{ev_concentration} for a recent improvement for the largest eigenvalue). 

More recently, there has been several advances in the large deviations of spectral observables of $\cG_{n,p}$. This began with Chatterjee and Varadhan \cite{CV12} where, building on their seminal work \cite{CV11}, the authors proved a large deviation principle for the entire spectrum of $\cG_{n, p}$ at scale  $np$, in the dense case, where $p$ is fixed (does not depend on $n$). However, these techniques turned out to be inadequate to handle the sparse regime, $p=p(n) \rightarrow 0$. The first major breakthrough into the sparse regime was made by Chatterjee and Dembo \cite{CD16}, which sparked a series of developments in  understanding the large deviations principle for various functionals of interest for sparse random graphs, as well as related natural statistical mechanics models \cite{augeri1,austin,BM17,eldan,yan}. Here, a fundamental example of interest is the large deviations of counts of small subgraphs, such as cycles, in $\cG_{n, p}$. In a series of exciting developments \cite{augeri1,ab_rb,BGLZ,CV11,CD18,upper_tail_localization,LZ-dense,LZ-sparse}, the precise behavior of the upper tail rate function for cycle counts in $\cG_{n,p}$ have been pinned down. This has a natural implication in the large deviations of spectral statistics via the relation between spectral moments and cycle counts. This strategy was adopted in a previous work by the first and the third author in \cite{uppertail_eigenvalue} to obtain the precise asymptotics for the upper tail large deviations of the spectral norm. However such arguments only extended to $p$ going to zero at a rate slower than $1/\sqrt n$,  since for sparser graphs, atypical behavior of the spectral norm fails to have a discernible effect on the cycle statistics.

In fact, as is evident from \eqref{eq:lambda1_expectation}, there is a threshold of sparsity (made precise later in \eqref{eq:lambda1_N}), above which the spectral norm is governed by the total number of edges and a fully delocalized eigenvector, while below the threshold it is determined by the highest degree vertex and a fully localized eigenvector. The regime of focus in the present paper is the latter, which, in particular, includes the case of constant average degree and is of interest in several areas of statistical physics.  Addressing the inability of the current techniques in treating such sparse random graphs, we develop new methods to understand the large deviation behavior of the edge eigenvalues in both the upper and lower tail regimes which, unsurprisingly, have quite different asymptotics compared to those obtained in \cite{uppertail_eigenvalue,CV12,CD18}. The main results of the paper are the joint upper and lower tail large deviations for the extreme eigenvalues 
of $\cG_{n, p}$ in the entire regime where the largest eigenvalues are typically determined by the largest degrees in the graph. 
In the process, we develop a large deviations theory for the extreme degrees of $\cG_{n, p}$, a relatively simple, yet useful, result which might be of independent interest. Our proofs also unearth interesting structural results on the random graphs and the relevant eigenvectors under such large deviation events. The formal statement of the theorems and key ideas of the proofs are given below in Section \ref{sec:eigenvalues} and Section \ref{sec:pf_outline}, respectively.

\subsection{Statements of the Main Results}\label{sec:eigenvalues}  Let $\sG_n$ denote the set of all simple, undirected graphs on $n$ vertices labelled $[n]:=\{1, 2, \ldots, n\}$. For $G \in \sG_n$ denote by $A(G)=((a_{ij}))_{1 \leq i, j \leq n}$ the adjacency matrix of $G$, that is $a_{ij}=1$, if $(i, j)$ is an edge in $G$, and 0 otherwise.
For $F \in \sG_n$, since $A(F)$ is a self-adjoint matrix, denote by $\lambda_1(F) \geq \lambda_2(F) \geq \cdots \geq \lambda_n(F)$ its eigenvalues in non-increasing order, and let $||F||:=||A(F)||=\max\{|\lambda_1(F)|, |\lambda_n(F)|\}$ be the operator norm of $A$. Various properties of the eigenvalues of a graph are often closely related to its degrees. To this end, for a graph $F \in \sG_n$, denote by $d_{v}(F)$ the degree of the vertex $v \in [n]$. Moreover, let $d_{(1)}(F) \geq d_{(2)}(F) \geq \cdots \geq d_{(n)}(F)$ be the degrees of the graph $F$ in non-increasing order.  
 
In this paper we are interested in the large deviations of the edge eigenvalue of the sprase Erd\H os-R\'enyi random graph $\cG_{n, p}$, where $p=p(n) \in (0, 1)$ can depend on $n$. As mentioned above, Krivelevich and Sudakov \cite[Theorem 1.1]{KS} derived the typical value of the largest eigenvalue $\lambda_1(\cG_{n, p})$ in the entire range of $p$, (recall \eqref{eq:lambda1_expectation}). Using the asymptotics of the typical value of the maximum degree $d_{(1)} (\cG_{n, p})$ (this is discussed later in Section \ref{sec:degree_preliminaries}, also cf. \cite[Lemma 2.2]{KS} and \cite[Proposition 1.9]{bbk}), it follows from \eqref{eq:lambda1_expectation} above that $\lambda_{(1)}(\cG_{n, p})$ has a qualitative change in behavior in the threshold regime where $n p$ is comparable to $\sqrt{\log n/\log \log n }$. More precisely (cf. \cite[Proposition 1.9]{bbk}, and \cite[Theorem 1.1 and Lemma 2.1]{KS}),\footnote{For two nonnegative sequences $\{a_n\}_{n\geq 1}$ and $\{b_n\}_{n\geq 1}$, $a_n = o(b_n)$ means $\lim_{n \rightarrow \infty} a_n/b_n=0$, and $a_n = \omega(b_n)$ means $\lim_{n \rightarrow \infty} a_n/b_n= \infty$. Similarly,  $a_n \ll b_n$  means $a_n = o(b_n)$, and $a_n \gg b_n$ means $a_n=\omega(b_n)$. Moreover, for two sequence of positive random variables $\{X_n\}_{n\geq 1}$ and $\{Y_n\}_{n\geq 1}$ defined on the same probability space, we write $X_n= (1+o(1)) Y_n$ if $X_n/Y_n$ converges to 1 in probability.}   
\begin{equation}\label{eq:lambda1_N}
\lambda_{(1)}(\cG_{n, p}) =\left\{
\begin{array}{cc}
(1+o(1)) \sqrt{d_{(1)}(\cG_{n, p})}  &   \text{ if }  0 < p \ll \frac{1}{n}\sqrt{\frac{\log n}{\log \log n }}, \\ 
(1+o(1)) np  &   \text{ if }  \frac{1}{n}\sqrt{\frac{\log n}{\log \log n}} \ll p < 1. 
\end{array}
\right.
\end{equation}
This shows that the typical value of the largest eigenvalue is governed by the maximum degree of the random graph below the threshold, whereas above the threshold it is determined by the total number of edges in the graph. In this paper, we will study the upper and lower tail large deviations of the $r$ largest eigenvalues of $\cG_{n, p}$ below the threshold. More specifically, we will consider the following sparsity regime: 
\begin{equation}\label{eq:rangep}
\log n \gg \log (1/np) \quad \text{ and } \quad n p \ll \sqrt{\frac{\log n}{\log \log n}}.
\end{equation} 
It can be shown that throughout this regime, for $r \geq 1$ fixed and $1 \leq a \leq r$,
\begin{align}\label{eq:Lp} 
\lambda_{a}(\cG_{n, p})=(1+o(1)) \sqrt{L_p}, \text{ where } L_p=\frac{\log n}{\log \log n -\log (np)}.
\end{align}
This result is proved in \cite[Proposition 1.9, Corollary 1.13]{bbk} with the first condition in \eqref{eq:rangep} replaced by $np \gtrsim 1$. The slightly weaker constraint $\log n \gg \log (1/np)$, which can be rewritten as $p \gg \frac{1}{n^{1+o(1)}}$,  allows a wider range of sparsity. (The validity of \eqref{eq:Lp} in this wider range is verified below in Lemma \ref{lem:degreep}.) Moreover, the edge eigenvalues remain bounded with high probability whenever 
$\log n \lesssim \log (1/np)$, whereas they diverge for $\log n \gg \log (1/np)$ (see Lemma \ref{lem:degreep}). Hence, the regime in \eqref{eq:rangep} covers the complete range of $p$ where the typical values of the edge eigenvalues are unbounded and governed by the largest degrees. 
We formally state our results for the joint upper and lower tail large deviations  for the $r$ largest eigenvalues in Section \ref{sec:eigenvalue_largest}. The corresponding results for the $r$ largest degrees are given in Section \ref{sec:degree_largest}. We discuss the analogous questions for the largest eigenvalue in the denser regime $np \gg \sqrt{\log n/\log \log n}$ in Section \ref{sec:lambda1}. This discussion includes an observation that a recent result of Cook and Dembo \cite{CD18}, regarding the lower tail large deviation of $\lambda_1(\cG_{n,p})$, in fact, holds in greater generality (see Proposition  \ref{ppn:lt_lambda1}). Several future research directions are discussed as well.

\subsubsection{\emph{\textbf{Joint Large Deviation of the Edge Eigenvalues}}}
\label{sec:eigenvalue_largest}

For $r \geq 1$, define the {\it upper tail event for the top $r$ eigenvalues} as
\begin{align}\label{eq:ut_delta}
\mathrm{UT}_r(\bm \delta) =\{F \in \sG_n: \lambda_1(F) \geq (1+\delta_1)  \sqrt{L_p}, \ldots, \lambda_r(F) \geq (1+\delta_r)  \sqrt{L_p}\},
\end{align}
where $\bm \delta = (\delta_1, \delta_2, \cdots, \delta_r)$ for some $\delta_1 \geq \delta_2 \geq \ldots \geq \delta_r >0$. The following theorem gives the precise asymptotics of the log-probability of the upper tail event in the regime of interest.  The proof is given in Section \ref{sec:ut_pf}. 

\begin{thm}\label{thm:1ut}
For $p$ as in \eqref{eq:rangep}, $r \geq 1$ fixed, and $\delta_1 \geq \delta_2 \geq \ldots \geq \delta_r>0$, 
	\begin{align}\label{eq:1ut}
			\lim_{n \rightarrow \infty}\frac{-\log \P( \cG_{n, p} \in \mathrm{UT}_r(\bm \delta)  ) }{\log n}  = \sum_{a=1}^{r}(2\delta_a+\delta^2_a), 
	\end{align}
where $\mathrm{UT}_r(\bm \delta)$ is as defined above in \eqref{eq:ut_delta}. 
\end{thm}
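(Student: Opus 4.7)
The guiding principle is that, since \eqref{eq:Lp} gives $\lambda_a(\cG_{n,p}) \approx \sqrt{d_{(a)}(\cG_{n,p})}$ typically, the cheapest realization of $\mathrm{UT}_r(\bm\delta)$ should be produced by forcing the $a$-th largest degree up to $(1+\delta_a)^2 L_p$ for each $a$. The plan is therefore to sandwich $\log \P(\mathrm{UT}_r(\bm\delta))$ between a planted lower bound that explicitly creates $r$ disjoint stars and an upper bound obtained from a structure theorem showing that the only cheap way to produce the spectral event is via disjoint large stars. Both bounds reduce to the joint large deviations of the ordered degrees of $\cG_{n,p}$, which I would develop first.

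\textbf{Planting lower bound and degree LDP.} For the lower bound, fix $r$ centers $v_1,\ldots,v_r \in [n]$ and pairwise disjoint leaf sets $\Lambda_a \subset [n]\setminus\{v_1,\ldots,v_r\}$ of size $k_a := \lceil (1+\delta_a)^2 L_p \rceil$, and condition on the event $\mathcal{E}$ that every edge $\{v_a,w\}$ with $w \in \Lambda_a$ is present while no other edges exist among $V^\star := \{v_1,\ldots,v_r\} \cup \bigcup_a \Lambda_a$. Cauchy interlacing applied to the principal submatrix of $A(\cG_{n,p})$ indexed by $V^\star$ yields $\lambda_a(\cG_{n,p}) \geq \sqrt{k_a} \geq (1+\delta_a)\sqrt{L_p}$. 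To evaluate $-\log\P(\mathcal{E})$, and in parallel the joint degree LDP, set $\beta_n := \log\log n - \log(np)$ so that $L_p = \log n/\beta_n$ and $\log(L_p/(np)) = \beta_n - \log\beta_n$; throughout the range \eqref{eq:rangep} one has $\beta_n \to \infty$, giving the key identity $L_p \log(L_p/(np)) = (1+o(1))\log n$ and hence $-\log\P(\mathrm{Bin}(n-1,p) \geq k_a) = (1+o(1))(1+\delta_a)^2 \log n$ by standard binomial tail estimates. Summing over $r$-tuples of vertices, using near-independence of the marginal degrees and the $\log\binom{n}{r} = O(\log n)$ correction, yields the rate $\sum_a (2\delta_a+\delta_a^2)\log n$ both for $-\log \P(\mathcal{E})$ and for the joint degree upper tail.

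\textbf{Structure theorem upper bound and main obstacle.} The matching upper bound on $\P(\mathrm{UT}_r(\bm\delta))$ is the crux. The plan is to show that, conditionally on $\mathrm{UT}_r(\bm\delta)$, with probability $1-o(1)$ there exist distinct vertices $u_1,\ldots,u_r \in [n]$ with $d_{u_a}(\cG_{n,p}) \geq (1+\delta_a - o(1))^2 L_p$, after which the joint degree LDP immediately delivers the claimed rate. The starting ingredient is the Krivelevich--Sudakov type bound $\lambda_1(F) \leq (1+o(1))\sqrt{d_{(1)}(F)}$, valid in the regime \eqref{eq:rangep} for typical sparse graphs, which extracts $u_1$. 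For $a \geq 2$, one deletes $u_1$ together with its incident edges and invokes Cauchy interlacing to transfer a $\lambda_a$-constraint on $\cG_{n,p}$ into a $\lambda_{a-1}$-constraint on the $(n-1)$-vertex residual graph, and iterating $r$ times extracts $u_2,\ldots,u_r$. The hardest part, and the technical heart of the argument, is the iterative cycle-removal step: one has to show that the candidate high-degree vertices $u_1,\ldots,u_r$ and their high-degree neighborhoods are essentially vertex-disjoint and support no short cycles, since a single dense microstructure --- for instance a short cycle through two high-degree vertices or a near-regular dense block --- could in principle produce several large eigenvalues more cheaply than planting several independent stars, breaking the reduction. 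Demonstrating that every such overlap configuration costs strictly more in $-\log \P$ than the target $\sum_a (2\delta_a+\delta_a^2)\log n$, so that these events can be discarded without affecting the rate, is the main obstacle, and is precisely where the cycle-removal process built on the estimates from \cite{KS} enters.
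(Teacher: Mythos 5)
Your overall strategy mirrors the paper's: a structure theorem via cycle removal, together with a reduction to the joint large deviations of the ordered degrees, with planting for the lower bound. However, there are two genuine issues.

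\textbf{Lower bound miscalculation.} As you define it, $\cE$ pins down the \emph{specific} leaf sets $\Lambda_a$, so $\P(\cE) = p^{\sum_a k_a}(1-p)^{\binom{|V^\star|}{2}-\sum_a k_a}$, and hence $-\log\P(\cE) = (1+o(1))\sum_a (1+\delta_a)^2\,L_p\log(1/p)$. In the regime \eqref{eq:rangep} one has $L_p\log(1/p) = \frac{\log n\,(\log n - \log(np))}{\log\log n - \log(np)}\gg\log n$, so this is super-polynomial and does \emph{not} match the target rate $\sum_a(2\delta_a+\delta_a^2)\log n$. The quantity $-\log\P(\mathrm{Bin}(n-1,p)\geq k_a)=(1+o(1))(1+\delta_a)^2\log n$ that you invoke already implicitly sums over all choices of leaves for vertex $v_a$, and conflating it with $\P(\cE)$ for a fixed $\Lambda_a$ is the source of the error. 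The fix is not to fix the leaves: condition on the degree upper-tail event $\mathrm{degUT}_r(\overline{\bm\delta})$ and show that, conditionally, a good structural event (the disjoint-star decomposition on the high-degree vertices) holds with probability $1-o(1)$, so that the degree event and the eigenvalue event have asymptotically the same probability. This is precisely what the paper does.

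\textbf{The ultrasparse regime.} The reduction you sketch — extract a high-degree vertex via $\lambda_1(F)\leq (1+o(1))\sqrt{d_{(1)}(F)}$, delete it, interlace, repeat — requires that bound to be available on the conditional event. The paper proves a disjoint-star decomposition (Lemma~\ref{lem:decompose}) only in the regime $np\geq e^{-(\log\log n)^2}$; there the star spectrum gives exactly $\lambda_1(K_{1,s})=\sqrt{s}$ and your reduction can be made to work (the paper instead uses the star-forest spectrum directly, which avoids the iteration). But in the ultrasparse regime $(\log\log n)^2\leq\log(1/np)\ll\log n$, the cycle-removal estimates (Lemmas~\ref{lem:largecycle} and \ref{lem:clem}) are no longer available, one only gets a decomposition into a forest plus a negligible part, and for a general tree $T$ the best deterministic bound is $\lambda_1(T)\leq 2\sqrt{d_{(1)}(T)-1}$. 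Applying that here would give $d_{(1)}\gtrsim (1+\delta_1)^2L_p/4$, and the factor $1/4$ wrecks the rate. The paper replaces the degree reduction in this regime by an argument on \emph{tree component sizes}: the bound $\lambda_1(T)\leq\sqrt{|V(T)|-1}$ plus the trace identity $\sum_i\lambda_i^2(T)=2(|V(T)|-1)$ and the symmetry of tree spectra (Lemma~\ref{lem:dlem} and the partition arguments that follow) are used to show that the top $r$ eigenvalues must come from $r$ disjoint trees of sizes roughly $(1+\delta_a)^2L_p$, whose probability is bounded directly via a BK-inequality and tree counting, bypassing degrees entirely. Your proposal does not address this regime; in particular, cycle removal alone cannot yield the $(1+o(1))$ factor you rely on there.

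A minor further point: the paper requires the good structural event $\cM$ to hold with probability $1-e^{-\omega(\log n)}$, not merely $1-o(1)$, because one is conditioning on events of polynomial probability; your phrase \textquotedblleft valid for typical sparse graphs\textquotedblright\ elides this quantitative requirement.
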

Note that the above shows that the upper tail probabilities decay as polynomials in $n$, unlike for $1/\sqrt{n} \ll p \ll 1$, where the results of \cite{uppertail_eigenvalue} imply that for $r=1,$ the upper tail  probabilities are of the form $e^{-(1+o(1)) c(\delta_1) n^2p^2\log(1/p))}$, for some constant $c(\delta_1) > 0$, depending on $\delta_1$. Discussions about the intermediate regime $\sqrt{\log n /\log \log n }/n \ll p \ll 1/\sqrt n$ appear in Section \ref{sec:lambda1}.

Next, we consider the lower tail probability.  To this end, for $0 < \delta_1 \leq \delta_2 \leq \cdots \leq \delta_r < 1$, denote the {\it lower tail event for the top $r$ eigenvalues} as, 
\begin{align}\label{eq:lt_delta}
\mathrm{LT}_r(\bm \delta) =\{F \in \sG_n: \lambda_1(F) \leq (1-\delta_1)  \sqrt{L_p},  \ldots, \lambda_r(F) \leq (1-\delta_r)  \sqrt{L_p}\}.
\end{align}
The lower tail asymptotics, which happens in log-log-scale, is given in the theorem below. The proof is given in Section \ref{sec:lt_pf}. 

\begin{thm}\label{thm:lt}
For $p$ as in \eqref{eq:rangep}, $r \geq 1$ fixed, and $0 < \delta_1 \leq \delta_2 \leq \cdots \leq \delta_r < 1$, 
	\begin{equation}\label{eq:1lt}
		\lim_{n \rightarrow \infty}\frac{1}{\log n} \left( \log \log \frac{1}{\P( \cG_{n, p} \in \mathrm{LT}_r(\bm \delta)  )} \right) =  2\delta_r - \delta_r^2,
	\end{equation}
where $\mathrm{LT}_r(\bm \delta)$ is as defined above in \eqref{eq:lt_delta}. 	
\end{thm}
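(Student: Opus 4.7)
The plan is to reduce \Cref{thm:lt} to a large deviations statement for the extremal degrees of $\cG_{n,p}$, obtaining matching upper and lower bounds at scale $\P(\cG_{n,p}\in \mathrm{LT}_r(\bm\delta))=\exp(-n^{2\delta_r-\delta_r^2+o(1)})$.

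For the lower bound, since $\delta_1\leq\delta_r$, the inclusion $\{\lambda_1(F)\leq (1-\delta_r)\sqrt{L_p}\}\subseteq \mathrm{LT}_r(\bm\delta)$ is trivial, so it suffices to lower-bound $\P\bigl(\lambda_1(\cG_{n,p})\leq (1-\delta_r)\sqrt{L_p}\bigr)$. I would intersect the event $\{d_{(1)}(\cG_{n,p})\leq (1-\delta_r-\eta)^2 L_p\}$, for an arbitrarily small $\eta>0$, with the typical event on which $\lambda_1\sim\sqrt{d_{(1)}}$ holds deterministically (as in Krivelevich--Sudakov \eqref{eq:lambda1_N}). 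The lower-tail side of the extremal-degree LDP developed in \Cref{sec:degree_largest} then gives $\P(d_{(1)}(\cG_{n,p})\leq (1-\delta_r-\eta)^2 L_p)\geq \exp(-n^{2(\delta_r+\eta)-(\delta_r+\eta)^2+o(1)})$, and sending $\eta\downarrow 0$ closes the bound.

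The upper bound is the substantive direction. Fixing a large constant $K=K(r)$, I would aim to establish the containment
\begin{align*}
\mathrm{LT}_r(\bm\delta)\ \subseteq\ \bigl\{d_{(K)}(\cG_{n,p})\leq (1-\delta_r)^2 L_p(1+\eta)\bigr\}\ \cup\ \mathcal{E},
\end{align*}
where $\mathcal{E}$ is a super-exponentially rare event capturing atypical local overlaps. The deterministic engine is the following Ramsey-type dichotomy: any graph $F$ with $K$ vertices of degree at least $d=(1-\delta_r)^2 L_p(1+\eta)$ admits either (i) $r$ such vertices with essentially disjoint neighborhoods, producing an induced vertex-disjoint star-forest with minimum star size $(1-o(1))d$, so that Cauchy interlacing forces $\lambda_r(F)\geq\sqrt{(1-o(1))d}>(1-\delta_r)\sqrt{L_p}$; or (ii) heavily overlapping high-degree neighborhoods, yielding a bouquet subgraph whose top eigenvalue, computed via a shared-neighbor quadratic-form bound (e.g.\ $\lambda_1\geq\sqrt{d+s}$ for a pair sharing $s$ common neighbors), exceeds $(1-\delta_1)\sqrt{L_p}$. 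Either alternative contradicts $\mathrm{LT}_r(\bm\delta)$, and the containment reduces the upper bound to the upper-tail side of the extremal-degree LDP.

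The main obstacle is quantitatively tuning $K$ so that the Ramsey-type dichotomy is sharp on both sides: the disjoint-stars branch must lose only $o(d)$ per star under pruning of shared leaves, while the bouquet branch must accumulate enough common-neighbor surplus across several pairs of high-degree vertices to bridge the gap between $\sqrt{d}\approx (1-\delta_r)\sqrt{L_p}$ and $(1-\delta_1)\sqrt{L_p}$. A secondary hurdle, delegated to the extremal-degree LDP, is controlling binomial tail probabilities and the weak dependence among vertex degrees in $\cG_{n,p}$ in the sparsity regime \eqref{eq:rangep}.
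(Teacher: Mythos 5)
Your high-level scheme matches the paper's architecture: reduce the lower bound to the degree lower tail via a structural event, and reduce the upper bound to the degree upper tail via a Ramsey-type dichotomy on high-degree vertices. But there are two concrete gaps. For the lower bound, you intersect the degree event $\{d_{(1)}\leq(1-\delta_r-\eta)^2 L_p\}$ with a typical structural event $\mathcal{S}$ on which $\lambda_1\lesssim\sqrt{d_{(1)}}$ and conclude $\P(\mathrm{LT}_r)\geq\P(\text{degree event}\cap\mathcal{S})$. But $\P(\mathcal{S})\to1$ does not yield $\P(\text{degree event}\cap\mathcal{S})=(1+o(1))\P(\text{degree event})$: the degree lower-tail event is stretch-exponentially rare, and a generic high-probability event can be essentially disjoint from it. Some positive-correlation argument is mandatory. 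The paper supplies it by showing both the star-decomposition event $\mathcal{D}$ and $\mathrm{degLT}_r$ are \emph{decreasing} events and invoking FKG to get $\P(\mathcal{D}\mid\mathrm{degLT}_r)\geq\P(\mathcal{D})\to1$; without such a step your lower bound does not close.

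For the upper bound, your dichotomy is incomplete. Even when $r$ high-degree vertices have essentially disjoint neighborhoods, the \emph{induced} subgraph on those vertices and their neighborhoods need not be a star forest: there can be many edges between the leaf sets of different stars (cross-edges between neighborhoods). These cross-edges do not appear in your two alternatives, yet they enter the Rayleigh quotient off-diagonally and can destroy the lower bound on $\lambda_r$ coming from $r$ disjointly supported test vectors; interlacing does not save you because the induced subgraph is not a disjoint union of stars. The paper's Lemma \ref{lm:r_I} handles this with a two-stage argument: it first rules out large \emph{vertex} overlaps via a test-vector bound (Lemma \ref{lm:overlap_I}, which crucially accumulates overlap over many pairs to reach $\lambda_1\geq 2\sqrt{L_p}$, a much stronger threshold than your $(1-\delta_1)\sqrt{L_p}$), and then applies Ramsey to an auxiliary graph whose edges encode \emph{scarcity of cross-edges} between the already-nearly-disjoint neighborhoods. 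An independent set in that auxiliary graph means many cross-edges and again forces $\lambda_1\geq 2\sqrt{L_p}$, while a clique means few cross-edges and is precisely what certifies $\lambda_r\geq(1-\delta_r+\varepsilon/2)\sqrt{L_p}$ via $r$ disjointly supported test vectors. Your single-stage dichotomy needs this third branch, and the pairwise $\lambda_1\geq\sqrt{d+s}$ bouquet bound is too weak to reach even $\sqrt{L_p}$ once $\delta_r$ is not small, since $d\approx(1-\delta_r)^2 L_p$.
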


Note that the results above show, while the upper tail probabilities decay as polynomials in $n$, the lower tail probabilities decay as stretch exponentially $n$.  This is consistent with many other natural settings where upper tail events lead to localized effects, whereas lower tail events force a more global change and, hence, are typically much more unlikely. Perhaps, the most well known example of this is in the tail behavior of the Tracy-Widom distribution function $F_{TW}(\cdot)$, which arises as the scaling limit of the largest eigenvalue of a Gaussian Unitary Ensemble (GUE) matrix. Here, the logarithm of the right tail of the Tracy-Widom distribution function $-\log(1-F_{TW}(x))$ decays as $x^{3/2}$, while the logarithm of the left tail $-\log F_{TW}(x)$ decays as  $x^3$.

\subsubsection{\emph{\textbf{Joint Large Deviation of the Largest Degrees}}}
\label{sec:degree_largest}

As alluded to before, a simple but useful ingredient in the above proofs is the large deviation of the $r$ largest degrees of $\cG_{n, p}$. This is what is developed next. To this end, given $\varepsilon_1 \geq \varepsilon_2 \geq \cdots \geq \varepsilon_r >0$, denote the {\it upper tail event for the $r$ largest degrees} by 
\begin{align}\label{eq:degree_ut_delta}
\mathrm{degUT}_r(\bm \varepsilon):=\{F \in \sG_n: d_{(1)}(F) \geq (1+\varepsilon_1)  L_p,  \ldots, d_{(r)}(F) \geq (1+\varepsilon_r)  L_p\}.
\end{align}
Similarly, for $0 < \varepsilon_1 \leq \varepsilon_2 \leq \cdots \leq \varepsilon_r < 1$, denote the {\it lower tail event for the $r$ largest degrees} by 
\begin{align}\label{eq:degree_lt_delta}
\mathrm{degLT}_r(\bm \varepsilon):=\{F \in \sG_n: d_{(1)}(F) \leq (1-\varepsilon_1)  L_p,  \ldots, d_{(r)}(F) \leq (1-\varepsilon_r)  L_p\}.
\end{align}
 
The following two propositions state the asymptotics of the probabilities of the upper and lower tail events respectively.

\begin{ppn}\label{ppn:degree_ut}
For $\log n \gg \log (1/np)$ and $n p \ll \log n$, $r \geq 1$ fixed, and $\varepsilon_1\geq \ldots \geq \varepsilon_r >0$, 
	\begin{equation}\label{eq:degree_ut}
	\lim_{n \rightarrow \infty} \frac{-\log \P(\cG_{n, p} \in \mathrm{degUT}_r(\bm \varepsilon) )}{\log n}= \sum_{a=1}^{r}\varepsilon_a. 
	\end{equation}
\end{ppn}

The proof is given in Section \ref{sec:dutpf}. The upper bound on the tail probability, is obtained by a union bound and an application of Chernoff's inequality for tails of the binomial distribution. For the matching lower bound, we fix a small $\kappa \in (0, 1)$, and split the vertex set of $\cG_{n, p}$ into two parts of sizes $n- \ceil{\kappa n}$ and $\ceil{\kappa n}$, and consider the degrees of the vertices from the larger part in to the smaller part. This has the advantage of making the degrees independent, hence, the resulting probabilities can be explicitly computed.

\begin{ppn}\label{ppn:deglt} 
For $\log n \gg \log (1/np)$ and $n p \ll \log n$, $r \geq 1$ fixed, and $0< \varepsilon_1\leq \ldots \leq \varepsilon_r <1$, 
	\begin{equation}\label{eq:deg_lt}
	\lim_{n \rightarrow \infty} \frac{1}{\log n} \left( \log\log \frac{1}{\P(\cG_{n, p} \in \mathrm{degLT}_r(\bm \varepsilon) )} \right)= \varepsilon_r,
	\end{equation}
where $\mathrm{degLT}_r(\bm \varepsilon)$ is as defined in \eqref{eq:degree_lt_delta}.	
\end{ppn}

The proof of this proposition is given in Section \ref{sec:degltpf}. As before, the upper  bound on the probability is obtained by a union bound and tail estimates of the binomial distribution. For the lower bound, we observe that the lower tail event for the degrees is a decreasing event (see Lemma \ref{lem:degfkg}), which allows us to invoke the  Fortuin-Kasteleyn-Ginibre (FKG) inequality, which implies that the smallness of individual degrees are positively correlated.

\begin{remark}\label{remark:degree_range} 
{\em Note that in both the above results we consider $n p \ll \log n$, instead of $n p \ll \sqrt{\log n/\log \log n}$ as in \eqref{eq:rangep}. This is because, even  though the typical value of the largest eigenvalue undergoes a transition from $\sqrt{L_p}$ to $np$, when $n p$ is comparable to $\sqrt{\log n/\log \log n}$, the largest degrees continue to be determined by $L_p$ as long as $n p \ll \log n$ (see Lemma \ref{lem:degreep} below), and the results above give their joint large deviations probabilities in this entire regime. } 
\end{remark}

\subsection{Proof Ideas}
\label{sec:pf_outline}

Before proceeding to the technical details, we sketch here the key ideas involved in the proofs of Theorem \ref{thm:1ut} and Theorem \ref{thm:lt}.  \\ 

\noindent \textsf{The Upper Tail}: The proof of the upper tail is split in two cases: (a) where  $e^{-(\log \log n)^2} \leq n p \ll \sqrt{\log n/\log \log n}$ and (b) for  $(\log \log n)^2  \leq \log(1/n p)  \ll \log n$. Building on the proof of Sudakov and Krivelevich \cite{KS}, the key technical work here goes into obtaining a structure theorem for the graph, conditioned on the large deviation event. Below we outline the key steps in the former case, and then explain the additional arguments needed to treat the latter case. 

\begin{itemize}

\item The first step is to observe that it is unlikely for the random graph to have `large' cycles because this probability is super-polynomially small in $n$ (Lemma \ref{lem:largecycle}), making this atypical even under the upper tail event which has probability polynomial in $n$. We then prove that no vertex is incident on too many edge-disjoint `small' cycles (Lemma \ref{lem:blem}). Hence, by iteratively removing cycles from the graph one can decompose the graph into a forest and a subgraph with a small spectral norm.  Moreover, conditioned on the upper tail event, the graph is unlikely to have many paths of length two between high degree vertices, and, hence, can be pruned again to obtain a disjoint union of high-degree star graphs and a graph with negligible spectral contribution (Lemma \ref{lem:decompose}). Finally, since the spectrum of a star graph is explicit, this allows us to conclude that the upper tail event for the top $r$ eigenvalues induces $r$ vertices with higher than typical degrees.  

\item In the second case, the argument takes a slightly different route, because few estimates used in the previous case cease to hold. However, in this case, one can directly remove all cycles and decompose the graph in to a forest and a subgraph with small spectral norm. Then we use estimates on the sizes of the tree components (Lemma \ref{lem:dlem}), as well as properties of the spectrum of trees to argue that the the upper tail event for the $r$ extreme eigenvalues induces $r$ disjoint trees of specific sizes, and the leading eigenvalues all come from the different tree components. 
\end{itemize}
In both the cases above, matching lower bounds are obtained via appropriate constructions which relies on the structural inputs from the arguments in the upper bound.  We also discuss,  in Section \ref{sec:pf_concentration}, another potential approach to proving the upper bound on the upper tail large probability using the strong concentration results of spectral norm of non-homogeneous sparse graphs proved in \cite{vershynin}. While several details remain to be verified, to illustrate the promise of this method we provide a proof sketch for the case $r=1$ and $p=c/n$. \\ 

\noindent \textsf{The Lower Tail}: The upper bound on the lower tail probability relies on an interesting Ramsey-type result for the extreme eigenvalues by relating them to the degrees: Namely, given $\varepsilon \in (0, 1)$ small enough (depending polynomially on $1/r$), we show that if the $K$-th largest degree (for some $K$ which is exponentially large in a polynomial in $1/\varepsilon$, arising from the bounds on the classical Ramsey-type functions) of the graph is bigger than $(1-\delta_r + \varepsilon)^2 L_p$, then either the largest eigenvalue $\lambda_1(\cG_{n, p}) > \sqrt{2 {L_p}}$ or the $r$-th largest eigenvalue $\lambda_r (G) \geq (1-\delta_r + \varepsilon/2) \sqrt{L_p}$ (see Lemma \ref{lm:r_I} for the precise statement). This contradicts the assumption of the lower tail event on the first $r$ eigenvalues, and shows that the lower tail probability is bounded above by the probability that $d_{(K)} \leq (1-\delta_r + \varepsilon)^2 L_p$. Since the lower probabilities of events are stretch exponentially small, the fact that our argument controls $d_{(K)}(\cG_{n, p})$, and not $d_{(r)}(\cG_{n, p})$, does not worsen our probability estimates in the leading order. The arguments use a variety of spectral graph theoretic observations coupled with an application of the classical Ramsey's theorem, and could be potentially useful in other applications. The lower bound on the lower tail probability relies on a construction which invokes the FKG inequality similar to previous applications (Section \ref{sec:lt_I}).

\section{Large deviations of the largest degrees: Proofs of Propositions \ref{ppn:degree_ut} and \ref{ppn:deglt}}
\label{sec:degree_pf}

In this section we prove the joint large deviation of the $r$ largest degrees of $\cG_{n, p}$. The upper tail (Proposition \ref{ppn:degree_ut}) is proved in Section \ref{sec:dutpf} and the lower tail (Proposition \ref{ppn:deglt}) is proved in Section \ref{sec:degltpf}. We begin with some technical preliminaries in Section \ref{sec:degree_preliminaries}.

\subsection{Preliminaries}
\label{sec:degree_preliminaries}

We begin by recording some standard asymptotic notation. For two positive sequences $\{a_n\}_{n\geq 1}$ and $\{b_n\}_{n\geq 1}$, $a_n = O(b_n)$ means $a_n \leq C_1 b_n$, $a_n = \Omega(b_n)$ means $a_n \geq C_2 b_n$, and $a_n = \Theta(b_n)$ means $C_2 b_n \leq a_n \leq C_1 b_n$, for all $n$ large enough and positive constants $C_1, C_2$. Similarly, $a_n \lesssim b_n$ means $a_n=O(b_n)$, and $a_n \gtrsim b_n$ means  $a_n=\Omega(b_n)$, and subscripts in the above notation,  for example $O_{\square}$ or $\lesssim_\square$,  denote that the hidden constants may depend on the subscripted parameters. 

We begin with an useful lemma which relates the second condition in \eqref{eq:rangep} with the condition $np \ll \sqrt{L_p}$, where $L_p$ is defined in \eqref{eq:Lp}. The proof is given in Appendix \ref{sec:sqrtLp_pf}. 

\begin{lem}\label{equivalence} 
The condition $n p \ll \sqrt{\log n/\log \log n}$ implies the condition $np \ll \sqrt{L_p}$. Moreover, if $n p \ll  {\log n}$, then the condition $np \ll \sqrt{L_p}$ implies $n p \ll \sqrt{\log n/\log \log n}$. Finally, the condition $n p \ll  {\log n}$ implies the condition $np \ll {L_p}.$ 
\end{lem}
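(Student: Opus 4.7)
The plan is to introduce the auxiliary parameter $T_n := \log n / (np)$ and reformulate each of the three conditions as a clean inequality in $T_n$ and $\log n$. Since $\log T_n = \log \log n - \log(np)$, one has $L_p = \log n / \log T_n$. Under any of the hypotheses in the lemma one has $np \ll \log n$, so $T_n \to \infty$. A short calculation shows that $np \ll \sqrt{\log n / \log \log n}$ is equivalent to $T_n^2 \gg \log n \log \log n$, that $np \ll \sqrt{L_p}$ is equivalent to $T_n^2 \gg \log n \log T_n$, and that $np \ll L_p$ is equivalent to $(\log T_n)/T_n \to 0$. The last of these is immediate from $T_n \to \infty$, settling the third assertion of the lemma.

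For the first assertion, the plan is to split into the cases $np \leq 1$ and $np > 1$. When $np \leq 1$, a direct estimate suffices:
\[
\frac{(np)^2}{L_p} \;=\; \frac{(np)^2 \log \log n}{\log n} + \frac{(np)^2 |\log(np)|}{\log n} \;\leq\; \frac{\log \log n}{\log n} + \frac{1}{2e \log n} \;\to\; 0,
\]
using $(np)^2 \leq 1$ and $\sup_{x \in (0,1]} x^2 |\log x| = 1/(2e)$. When $np > 1$, one has $T_n \leq \log n$ and hence $\log T_n \leq \log \log n$; then the hypothesis $T_n^2 \gg \log n \log \log n$ directly implies $T_n^2 \gg \log n \log T_n$, i.e., $np \ll \sqrt{L_p}$.

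For the converse assertion, assume $np \ll \log n$ and $np \ll \sqrt{L_p}$, so that $T_n \to \infty$ and $T_n^2 \geq f_n \log n \log T_n$ for some sequence $f_n \to \infty$. The plan is to argue by contradiction: suppose that along some subsequence $T_n^2 \leq M \log n \log \log n$ for a constant $M$. Combining the two bounds on $T_n^2$ forces $f_n \log T_n \leq M \log \log n$, whence $\log T_n = o(\log \log n)$. On the other hand, since $T_n \to \infty$ we eventually have $\log T_n \geq 1$, and then $T_n^2 \geq f_n \log n \log T_n \geq f_n \log n$; taking logarithms yields $\log T_n \geq (\tfrac{1}{2} + o(1)) \log \log n$, contradicting $\log T_n = o(\log \log n)$.

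The only mildly delicate step is the boundary regime where $np$ is close to $\sqrt{\log n / \log \log n}$ and the lower-order gap between $\log T_n$ and $\log \log n$ must be tracked carefully; the substitution $T_n = \log n/(np)$ is chosen precisely so that this gap is captured by a single variable, reducing the whole lemma to routine comparisons among $\log n$, $\log T_n$, and $\log \log n$.
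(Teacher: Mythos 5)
Your proof is correct. Your central device is the substitution $T_n = \log n/(np)$, which turns $L_p$ into $\log n/\log T_n$ and reduces all three implications to comparisons among $T_n^2$, $\log n\log T_n$, and $\log n\log\log n$; this is a cleaner unification than the paper's argument, which uses a different ad hoc substitution for each part. In detail: for part (1) the paper splits on $np \gtrless 1/\log n$ and computes $L_p = \Theta(\log n/\log\log n)$ or $\Theta(\log n/\log(1/np))$ in the two cases, while you split on $np \gtrless 1$, using the explicit calculus bound $\sup_{x\in(0,1]} x^2|\log x| = 1/(2e)$ for the small-$np$ case and the monotone inequality $\log T_n \leq \log\log n$ for the large-$np$ case; for part (2) the paper argues directly that $L_p = o(\log n)$ forces $\log(np) \leq \tfrac12\log\log n$ and hence $L_p \lesssim \log n/\log\log n$, whereas you argue by contradiction, deriving incompatible growth rates of $\log T_n$; for part (3) the paper sets $np = c\log n$ and reduces to $c\log(1/c)\to 0$, while you reduce to the standard fact $\log T_n/T_n \to 0$. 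Both arguments are sound; your version buys a uniform bookkeeping variable at the mild cost of a contradiction argument in part (2), and is arguably easier to verify.
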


An important step towards understanding the large deviation probabilities of the largest degrees and edge eigenvalues is to understand their corresponding typical values in the regime of interest. In this regard, it is  well-known \cite[Lemma 2.2]{KS} that the maximum degree is $d_{(1)}(\cG_{n, p})=(1+o(1))\Delta_p$, where  
\begin{align}\label{eq:degreep}
\Delta_p := \max \left\{s: n \binom{n-1}{s}p^s(1-p)^{n-s} \geq 1 \right\}. 
\end{align} 
Note that $\Delta_p$ is the largest $s$ for which the expected number of vertices with degree $s$ is at least $1$.

The following lemma shows that 
\begin{equation}\label{relation345}
\Delta_p=(1+o(1))L_p
\end{equation} and for any fixed $r,$ $d_{(a)}(\cG_{n, p})=(1+o(1)) L_p$, for $1\leq a \leq r$, where $L_p$ is defined in \eqref{eq:Lp}, whenever $\log n \gg \log (1/np)$ and $n p \ll  \log n$. It also lists other properties of the largest degrees and eigenvalues, which we will often rely on in our proofs. 

\begin{lem}\label{lem:degreep} Let $\Delta_p$ be as defined in \eqref{eq:degreep}. Then the following hold: 
\begin{enumerate}
\item[(a)] For $\log n \lesssim \log (1/np)$, $\Delta_p=O(1)$.
\item[(b)] For $\log n \gg \log (1/np)$ and $n p \ll  \log n$, $\Delta_p=(1+o(1)) L_p$. 
\item[(c)] For $\log n \gg \log (1/np)$ and $n p \ll  \log n $, with high probability, $d_{(a)}(\cG_{n, p})=(1+o(1)) L_p$, for $1\leq a \leq r$. 
\item[(d)]  For $\log n \gg \log (1/np)$ and $n p \ll  \sqrt{\log n/\log \log n} $,  with high probability, $\lambda_a(\cG_{n, p})=(1+o(1)) \sqrt{L_p}$, for $1\leq a \leq r$. 
\end{enumerate}
\end{lem}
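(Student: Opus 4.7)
The heart of the argument is a sharp asymptotic expansion of the expected count
\[
\mu_s := n\binom{n-1}{s}p^{s}(1-p)^{n-s},
\]
whose threshold in $s$ defines $\Delta_p$ in \eqref{eq:degreep}. Using Stirling's formula for $s\ll n$ and $(1-p)^{n-s}=e^{-(n-s)p}(1+o(1))$ (valid since $np\ll\log n$ forces $np^{2}=o(1)$), one obtains
\[
\log\mu_s \;=\; \log n \;-\; s\log\!\tfrac{s}{enp} \;-\; np \;+\; o(\log n),
\]
uniformly for $s$ in a shrinking neighbourhood of $L_p$. Writing $T:=\log\log n-\log(np)$ so that $L_p=\log n/T$ and $\log L_p -\log(np)-1 = T-\log T-1$, one checks that, for $\varepsilon_n\to 0$,
\[
s\log\tfrac{s}{enp}\Big|_{s=(1\pm\varepsilon_n)L_p} \;=\; (1\pm\varepsilon_n)\log n \;\pm\; \varepsilon_n L_p \;-\; L_p\log T \;+\; O(\varepsilon_n^{2}L_p).
\]
Under the hypothesis $\log n\gg\log(1/np)$ one has $T\to\infty$, so $L_p\log T=o(\log n)$; therefore $\varepsilon_n\to 0$ can be chosen slowly enough that $\mu_{(1+\varepsilon_n)L_p}\to 0$ and $\mu_{(1-\varepsilon_n)L_p}\to\infty$. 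Combined with monotonicity of $s\mapsto s\log(s/(enp))$ for $s>np$ (which holds since Lemma~\ref{equivalence} gives $L_p\gg np$), this pins down $\Delta_p=(1+o(1))L_p$, establishing (b). Part (a) is an immediate corollary: if $np\leq n^{-c}$ for fixed $c>0$, then $\mu_s\leq n(enp/s)^{s}\leq n\cdot n^{-cs}(e/s)^{s}$ falls below $1$ as soon as $s\geq\lceil 1/c\rceil+1$, so $\Delta_p=O(1)$.

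For (c), the upper bound $d_{(1)}\leq(1+o(1))L_p$ w.h.p.\ follows from the union bound $\mathbb{P}(d_{(1)}\geq s)\leq n\,\mathbb{P}(\mathrm{Bin}(n-1,p)\geq s)=(1+o(1))\mu_s$, using the standard point-tail comparison $\mathbb{P}(\mathrm{Bin}\geq s)=(1+o(1))\mathbb{P}(\mathrm{Bin}=s)$ for $s\gg np$; the above expansion makes this $n^{-\varepsilon+o(1)}$ at $s=(1+\varepsilon)L_p$. For the matching lower bound $d_{(r)}\geq(1-o(1))L_p$, I will apply the second moment method to $X_s:=\#\{v\in[n]:\, d_v(\cG_{n,p})\geq s\}$ at $s=\lceil(1-\varepsilon_n)L_p\rceil$; part (b) gives $\mathbb{E} X_s\to\infty$, and for distinct $u,v$, conditioning on the presence of the edge $\{u,v\}$ and using independence of the remaining edges yields
\[
\mathrm{Cov}(\mathbf{1}\{d_u\geq s\},\mathbf{1}\{d_v\geq s\}) \;=\; p(1-p)\,\mathbb{P}(\mathrm{Bin}(n-2,p)=s-1)^{2},
\]
which after routine comparison between the binomial point mass at $s-1$ and the tail at $s$ gives $\mathrm{Var}(X_s)=o((\mathbb{E} X_s)^{2})$ and hence $X_s\geq r$ w.h.p.

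Part (d) combines the preceding pieces. The upper bound is immediate from the Krivelevich--Sudakov result \eqref{eq:lambda1_expectation}: $\lambda_a(\cG_{n,p})\leq\lambda_1(\cG_{n,p})=(1+o(1))\max\{\sqrt{d_{(1)}(\cG_{n,p})},np\}=(1+o(1))\sqrt{L_p}$, using (c) together with $np\ll\sqrt{L_p}$ from Lemma~\ref{equivalence}. For the matching lower bound, (c) supplies $r$ vertices $v_1,\ldots,v_r$ with $d_{v_i}(\cG_{n,p})\geq(1-o(1))L_p$. A short union bound shows that the expected number of common neighbours of distinct $v_i$'s is $O(r^{2}np^{2})=o(1)$ and the expected number of edges inside $\bigcup_i N(v_i)$ is $O(r L_p^{2}p)=O((\log n)^{3}/n)=o(1)$, since $p=O(\log n/n)$. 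Hence w.h.p.\ the principal submatrix of $A(\cG_{n,p})$ on $\{v_1,\ldots,v_r\}\cup\bigcup_i N(v_i)$ agrees with the adjacency matrix of a disjoint union $H$ of $r$ stars with leaf counts at least $(1-o(1))L_p$; Cauchy interlacing then gives $\lambda_a(\cG_{n,p})\geq\lambda_a(H)=(1-o(1))\sqrt{L_p}$ for $1\leq a\leq r$.

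\textbf{Main obstacle.} The subtlest step is the variance computation in (c): because $\varepsilon_n$ must drift to $0$, one needs the multiplicative (rather than additive) covariance bound provided by the explicit edge-coupling above, as the crude estimate $\mathrm{Cov}=O(p(\mathbb{E} X_s/n)^{2})$ is not sharp enough in that regime. Everything else reduces to careful bookkeeping around the asymptotic expansion of $\mu_s$ together with the input $L_p\gg np$ from Lemma~\ref{equivalence}.
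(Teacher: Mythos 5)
Your parts (a) and (b) follow essentially the same strategy as the paper (direct estimation of $\mu_s$ via Stirling, with a slowly vanishing $\varepsilon_n$), so there is nothing to compare there. Parts (c) and (d), however, take a genuinely different route. For (c) the paper simply invokes \cite[Proposition 1.13]{bbk} when $np\gtrsim 1$ and, when $np\ll1$, combines \cite[Lemma 2.2]{KS} (upper bound) with the Chernoff/FKG bound from the proof of Proposition~\ref{ppn:deglt} (lower bound); you instead run a self-contained second moment argument on $X_s=\#\{v:d_v\ge s\}$, and your explicit edge-coupling identity $\mathrm{Cov}=p(1-p)\,\P(\mathrm{Bin}(n-2,p)=s-1)^2$ is correct and does the job across the whole range of $p$ without outsourcing to \cite{bbk}. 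For (d) the paper again cites \cite[Proposition 1.9]{bbk} for $np\gtrsim1$ and, for $np\ll1$, uses the global structural event that $\cG_{n,p}$ is a forest with component sizes $(1+o(1))\Delta_p$ to force the $r$ high-degree vertices into distinct tree components; your alternative is a local argument — the neighbourhoods of the $r$ high-degree vertices are essentially disjoint stars — followed by Cauchy interlacing. Your approach is more elementary and works uniformly without the case split, which is a genuine advantage.

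One caveat in (d): the bookkeeping is not quite right as written. You estimate the expected number of common neighbours of distinct $v_i$'s as $O(r^2np^2)$ and the expected number of internal edges as $O(rL_p^2p)$, treating the $v_i$'s as if they were fixed vertices. But the $v_i$'s are chosen to be high-degree, and conditioning on $d_{v_i}\gtrsim L_p$ inflates the neighbour overlaps: the relevant quantity for a pair of vertices with degree $\approx L_p$ is of order $L_p^2/n$, not $np^2$. The correct way to run the argument is to union-bound over all candidate tuples $(v_1,\dots,v_r,w)$ (resp.\ $(v_1,\dots,v_r,w,w')$), incorporating both the degree events $\{d_{v_i}\ge(1-\e)L_p\}$ and the incidence events into the probability; one then gets bounds of the shape $n^{2\e-1+o(1)}(np)^2$ and $n^{2\e-1+o(1)}(np)^3$, which are $o(1)$ for $\e<\tfrac12$ since $np$ is polylogarithmic. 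The conclusion you want is therefore still correct, but the justification needs this more careful union bound rather than a fixed-vertex expectation count.
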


The proof of the lemma is given in Appendix \ref{sec:degreep_pf}. In fact, the results in Lemma \ref{lem:degreep}(b), (c), and (d) are proved in \cite[Corollary 1.13]{bbk} in the regime $1 \lesssim n p \ll  \log n $. The condition $\log n \gg \log (1/np)$ above allows for a slightly larger range of $p$. Note that Lemma \ref{lem:degreep}(a) and (b), combined show that $\log n \gg\log (1/np)$ is the threshold where the largest degrees transitions from being bounded to unbounded, since $L_p\gg 1$ precisely at this threshold. As mentioned before,  \eqref{eq:rangep} covers the full range where typically the largest eigenvalue is unbounded and governed by the maximum degree, and Lemma \ref{lem:degreep}(d) shows that throughout this regime the largest eigenvalue is asymptotically $\sqrt{L_p}$.

\subsection{Proof of Proposition \ref{ppn:degree_ut}}
\label{sec:dutpf}

We begin with the upper bound on $\P(\cG_{n, p} \in \mathrm{degUT}_r(\bm \varepsilon))$. Label the vertices of the graph $\cG_{n, p}$ by $\{1, 2, \ldots, n\}$. Note that 
$$\{\cG_{n, p} \in \mathrm{degUT}_r(\bm \varepsilon) \} \subseteq \left\{ \exists \{i_1, \ldots, i_r\} \subset [n]: d_{i_1}(\cG_{n, p}) \geq (1+\varepsilon_{i_1}) L_p, \ldots,  d_{i_r}(\cG_{n, p}) \geq (1+\varepsilon_{i_r}) L_p\right\}.$$
Then, by a union bound, 
\begin{equation*}
\P(\cG_{n, p} \in \mathrm{degUT}_r(\bm \varepsilon)) \leq n^r \P(d_{1}(\cG_{n, p}) \geq (1+\varepsilon_1) L_p, \ldots, d_{r}(\cG_{n, p}) \geq (1+\varepsilon_r)  L_p ).  
	\end{equation*}
For $1 \leq a \leq r$, denote by $d_{a}^+(\cG_{n, p})$ the number of edges from  the vertex labelled $a$ to the set of vertices $\{r+1,\ldots, n\}$. Note that $d_{a}(\cG_{n, p}) \geq (1+\varepsilon_a) L_p$ implies $d_{a}^+(\cG_{n, p}) \geq (1+\varepsilon_a) L_p-(r-1)$. Moreover, $d_{1}^+(\cG_{n, p}), d_{2}^+(\cG_{n, p}), \ldots, d_{r}^+(\cG_{n, p})$ are independent and $d_{a}^+(\cG_{n, p}) \sim \dBin(n-r, p)$ for $1\le a\le r$. Therefore,
\begin{align}\label{eq:degutpf_I}
\P(\cG_{n, p} \in  \mathrm{degUT}_r(\bm \varepsilon) ) & \leq  n^r \prod_{a=1}^{r} \P(d_{a}^+(\cG_{n, p})  \geq(1+\varepsilon_a) L_p-(r-1)) \nonumber \\ 
& \leq n^r \prod_{a=1}^{r}\P(\dBin(n, p)\geq(1+\varepsilon_a) L_p-r ).
\end{align}
Now, by Chernoff's bound, for $q > p$, $\P(\dBin(n, p) \geq n q) \leq e^{-n I_p(q)}$, where $I_p(x) := x \log\frac{x}{p}+ (1-x) \log\frac{1-x}{1-p}$ is the relative entropy function (cf. \cite[Lemma 4.7.2]{ash}). Using this in \eqref{eq:degutpf_I} above gives,  
\begin{align}\label{eq:degutpf_II}
\P(\cG_{n, p} \in  \mathrm{degUT}_r(\bm \varepsilon) ) \leq  n^r  \exp\left\{- n \sum_{a=1}^r I_{p}(p + g_p(\varepsilon_a)) \right\},  
\end{align} 		
where $g_p(\varepsilon_a)=\frac{(1+\varepsilon_a)L_p-r}{n}-p$. Using $L_{p}\gg 1$ (which follows from the assumption $\log n \gg \log(1/np)$) and $L_p \gg n p$ (which follows by Lemma \ref{equivalence}), we get $g_p(\varepsilon_a) \gg p$. Therefore, by estimates on the relative entropy function, see \cite[Lemma 3.3]{LZ-sparse}, 
\begin{align}\label{eq:h}
I_p(p+g_p(\varepsilon_a))= (1+o(1)) g_p(\varepsilon_a) \log(g_p(\varepsilon_a)/p). 
\end{align}
{Moreover, since $g_p(\varepsilon_a)=(1+o(1)) \frac{(1+\varepsilon_a)L_p}{n}$ and 
\begin{align}\label{eq:LplogLp}
 L_p \log((1+\varepsilon_a)L_p/np)&= \frac{\log(1+\varepsilon_a)+ \log\log n -\log(np) - \log(\log\log n -\log(np))}{\log\log n -\log(np)} \log n \nonumber \\
&= (1+o(1)) \log n, 
\end{align} 
where the last step uses $\log\log n -\log(np) \rightarrow \infty$,} since $np \ll \log n$ by  hypothesis. Then by \eqref{eq:degutpf_II} and  \eqref{eq:h},  
\begin{align*}
\P(\cG_{n, p} \in  \mathrm{degUT}_r(\bm \varepsilon) ) & \leq n^r \exp\left\{-\sum_{a=1}^{r}(1+\varepsilon_a)\log n+o(\log n) \right\} \nonumber \\ 
&=\exp\left\{-\left(\sum_{a=1}^r \varepsilon_a \right) \log n +o(\log n) \right\},
\end{align*}
which gives the upper bound on the log-probability as in \eqref{eq:degree_ut}.

We now proceed to lower bound $\P(\cG_{n, p} \in \mathrm{degUT}_r(\bm \varepsilon))$. To this end, fix $\kappa \in (0, 1)$ and  let 
\begin{align}\label{eq:V12}
V_1=\{1, 2, \ldots, n-\ceil{\kappa n} \} \quad  \text{and}  \quad V_2=[n]\backslash V_1,
\end{align} 
and consider the bipartite subgraph $F_\kappa$ of $\cG_{n, p}$ with vertex partition $V_1$ and $V_2$. Partition $V_1$ in to $r$ sets of approximately equal size, that is, 
$$V_1=V_{1, 1} \bigcup \cdots \bigcup V_{1, r},$$
where $V_{1, a} \bigcap V_{1, b} =\emptyset$, for $1 \leq a < b \leq r$, $|V_{1, a}|= \left\lceil\frac{n-\ceil{\kappa n}}{r} \right \rceil$, for $1 \leq a \leq r-1$. Define, for $1 \leq a \leq r$,  
$$d_{(V_{1, a})}(F_\kappa):=\max_{v \in V_{1, a}} d_{v}(F_\kappa),$$
the maximum degree in the graph $F_\kappa$ of a vertex in $V_{1, a}$. Then note that 
$$\bigcap_{a=1}^r \left\{d_{(V_{1, a})}(F_\kappa)\geq (1+ \varepsilon_a) L_p \right\} \subseteq 
\{\cG_{n, p} \in \mathrm{degUT}_r(\bm \varepsilon)\}.$$ 
Therefore, since the events $\{d_{(V_{1, 1})}(F_\kappa)\geq (1+ \varepsilon_1) L_p \}, \{d_{(V_{1, 2})}(F_\kappa)\geq (1+ \varepsilon_2) L_p \}, \ldots, \{d_{(V_{1, r})}(F_\kappa)\geq (1+ \varepsilon_r) L_p \}$ are independent, 
\begin{equation*}
\P( \cG_{n, p} \in \mathrm{degUT}_r(\bm \varepsilon) ) \geq \prod_{a=1}^r \P(d_{(V_{1, a})}(F_\kappa)\geq (1+ \varepsilon_a) L_p ).
\end{equation*}
The lower bound for $\P(\cG_{n, p} \in \mathrm{degUT}_r(\bm \varepsilon))$, as in \eqref{eq:degree_ut}, now follows from the lemma below, completing the proof of Proposition \ref{ppn:degree_ut}. \hfill $\Box$ 

\begin{lem} For any $\kappa \in (0, 1)$ and $1 \leq a \leq r$, 
$$\P(d_{(V_{1, a})}(F_\kappa)\geq (1+ \varepsilon_a) L_p ) = e^{-\varepsilon_a \log n + o(\log n)},$$
where $F_\kappa$ is as defined above.
\end{lem}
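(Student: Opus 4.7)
The plan is to exploit that the bipartite structure of $F_\kappa$ makes the degrees in $V_{1,a}$ mutually independent. Concretely, for every $v \in V_{1,a}$ the degree $d_v(F_\kappa)$ counts only edges from $v$ to $V_2$, so $\{d_v(F_\kappa)\}_{v \in V_{1,a}}$ are i.i.d.\ $\dBin(\lceil \kappa n \rceil, p)$ random variables. Writing the maximum in complementary form,
\[
\P\bigl(d_{(V_{1,a})}(F_\kappa) \geq (1+\varepsilon_a) L_p\bigr) = 1 - (1-q_a)^{|V_{1,a}|}, \qquad q_a := \P\bigl(\dBin(\lceil\kappa n\rceil, p) \geq (1+\varepsilon_a) L_p\bigr),
\]
and recalling $|V_{1,a}| = \Theta(n)$, the problem reduces to establishing $q_a = n^{-(1+\varepsilon_a) + o(1)}$.

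The upper bound on $q_a$ is a verbatim repetition of the Chernoff/relative-entropy calculation already executed between \eqref{eq:degutpf_II} and \eqref{eq:LplogLp}: replacing $n$ by $\lceil \kappa n \rceil$ only shifts the argument of the relevant logarithm by the $O(1)$ additive term $\log(1/\kappa)$, which after multiplication by $L_p = o(\log n)$ contributes $o(\log n)$ to the exponent and is absorbed into the error. For the matching lower bound I would bound $q_a$ from below by the single-point probability $\P(\dBin(\lceil\kappa n\rceil, p) = k)$ at $k = \lceil (1+\varepsilon_a) L_p \rceil$, expand $\binom{\lceil \kappa n\rceil}{k} p^k (1-p)^{\lceil \kappa n\rceil - k}$ with Stirling's formula, and invoke \eqref{eq:LplogLp} once more; the Stirling error $O(\log k) = O(\log \log n)$ and the term $(\lceil \kappa n\rceil - k) \log(1-p) = -\kappa n p + O(n p^2)$ are both $o(\log n)$ because $k, np \ll \log n$.

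With $q_a = n^{-(1+\varepsilon_a) + o(1)}$ in hand, the elementary asymptotic $1-(1-x)^N = (1+o(1)) N x$ (valid whenever $N x \to 0$) completes the argument, since $|V_{1,a}| q_a = n^{-\varepsilon_a + o(1)} \to 0$ by $\varepsilon_a > 0$, and hence
\[
\P\bigl(d_{(V_{1,a})}(F_\kappa) \geq (1+\varepsilon_a) L_p\bigr) = (1+o(1)) |V_{1,a}| \, q_a = e^{-\varepsilon_a \log n + o(\log n)}.
\]

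The only delicate point is bookkeeping: ensuring the $O(1)$ and $O(L_p)$ corrections arising from replacing $n$ with $\kappa n$, from Stirling's formula applied at $k \asymp L_p$, and from $(1-p)^{\lceil \kappa n\rceil - k}$ all remain $o(\log n)$. Each of these hinges on the single fact that $L_p = o(\log n)$ under the hypothesis $np \ll \log n$, which is implicit in the proof of Lemma \ref{lem:degreep}(b) and so requires no new input.
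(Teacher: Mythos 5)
Your proposal is correct and follows essentially the same route as the paper: both exploit the bipartite independence to reduce the maximum over $V_{1,a}$ to $1-(1-q_a)^{|V_{1,a}|}$ with $q_a$ a binomial tail probability, both establish $q_a = n^{-(1+\varepsilon_a)+o(1)}$, and both close with $1-(1-x)^N = (1+o(1))Nx$ when $Nx \to 0$. The only cosmetic difference is that the paper lower-bounds $q_a$ by citing the ready-made inequality $\P(\dBin(N,p) \geq K) \geq e^{-NI_p(K/N)}/\sqrt{8K(1-K/N)}$ from Ash, while you propose to reprove the equivalent fact directly from the point mass at $k = \lceil(1+\varepsilon_a)L_p\rceil$ via Stirling; these are the same estimate under the hood. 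Your bookkeeping of the error terms ($O(\log L_p)$ from Stirling's prefactor, $O(\kappa np)$ from $(1-p)^{\lceil\kappa n\rceil - k}$, the $\log(1/\kappa)$ shift) is sound, each being $o(\log n)$ precisely because $L_p = o(\log n)$ under the standing hypothesis $np \ll \log n$. You are also slightly more explicit than the paper about the upper bound on $q_a$: the paper's final display only writes the $\geq$ direction and leaves the matching Chernoff upper bound implicit in the parenthetical justification of $|V_{1,a}|\theta_p \ll 1$.
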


\begin{proof} Throughout the proof we denote $N:=\ceil{\kappa n}$. Recalling the definition of $V_2$ from \eqref{eq:V12}, note that $\{d_v(F_\kappa), v \in V_{1, a} \}$ are independent $\dBin(N, p)$ random variables. This implies, 
\begin{equation}\label{eq:dV_I}
\P(d_{(V_{1, a})}(F_\kappa)\geq (1+ \varepsilon_a) L_p ) = \P\left(\max_{v \in V_{1, a}} d_v(F_\kappa) \geq (1+\varepsilon_a)  L_p\right) = 1- (1- \theta_p)^{|V_{1, a}|},
\end{equation}
where $\theta_p=\P(d_1(F_\kappa) \geq (1+\varepsilon_a)  L_p)=\P(\dBin( N, p) \geq (1+\varepsilon_a)  L_p)$. {Now, using the inequality (cf. \cite[Lemma 4.7.2]{ash}), 
$$\P(\dBin(N,p) \geq K) \geq \frac{e^{-NI_p(K/N)} }{\sqrt{8K(1-K/N)}}, \quad \text{for } K > N p,$$ gives
\begin{align}
\theta_p  & \geq \frac{ e^{- N I_{p}\left( (1+ \varepsilon_a )L_p/N\right)} }{\sqrt{8(1+\varepsilon_a)L_p(1- (1+\varepsilon_a)L_p/N)}}  \tag*{ (since $L_p \gg np$ by Lemma \ref{equivalence})}\nonumber \\ 
&\geq  e^{-(1+\varepsilon_a)L_p \log\left((1+\varepsilon_a)L_p/N \right) + o(\log n) } \tag*{(using \eqref{eq:h} and $L_{p}\ll \log n$)} \nonumber \\ 
\label{eq:dV_II} &= e^{- (1+ \varepsilon_a) \log n + o(\log n)}, 
\end{align} 
where the last step uses \eqref{eq:LplogLp}.  Using \eqref{eq:dV_I}, \eqref{eq:dV_II}, and $1-(1-\theta_p)^{|V_{1, a}| } = (1+o(1)) |V_{1, a}|  \theta_p$ (since $|V_{1, a}|  \theta_p \leq n\theta_p \ll 1$, by Chernoff's inequality) gives, 
$$\P(d_{(V_{1, a})}(F_\kappa)\geq (1+ \varepsilon_a) L_p )  \geq (1+o(1)) |V_{1, a}| \theta_p  = 
e^{-  \varepsilon_a \log n + o(\log n)},$$ 
where the last step uses $|V_{1, a}| =\Theta_{\kappa, r}(n)$ and \eqref{eq:dV_II}.} 
\end{proof}

\subsection{Proof of Proposition \ref{ppn:deglt}}
\label{sec:degltpf}

We start with the trivial bound
\begin{align}\label{eq:degree_r_I}
\P(\cG_{n, p} \in \mathrm{degLT}_r(\bm \varepsilon)) \leq   \P(d_{(r)}(\cG_{n, p}) \leq (1-\varepsilon_r) L_p). 
\end{align} 
Note that $$\{d_{(r)}(\cG_{n, p}) \leq (1-\varepsilon_r) L_p\} \subseteq \left\{ S \subset [n]: |S|=r-1 \text{ and } \max_{v \in [n]\backslash S} d_{v}(\cG_{n, p}) \leq (1-\varepsilon_r) L_p \right\}.$$ Therefore, by \eqref{eq:degree_r_I} and a union bound, 
\begin{align}\label{eq:deglt_pf}
\P(\cG_{n, p} \in \mathrm{degLT}_r(\bm \varepsilon)) & \leq {n \choose r-1} \P\left(\max_{v \in \{ r , \ldots, n \} } d_{v}(\cG_{n, p}) \leq (1-\varepsilon_r) L_p \right)  \nonumber \\ 
& \leq n^r \P\left(\max_{v \in \{ r, \ldots, n \} } d_{v}(\cG_{n, p}) \leq (1-\varepsilon_r)  L_p \right). 
\end{align}
Now,  fix $\kappa \in (0, 1)$ and consider any $V_1 \subset \{r, \ldots, n\}$ with $|V_1|=n-\ceil{\kappa n}$. For each $v \in V_1$, denote by $d_v^+(\cG_{n, p})$ the number of edges from $v$ to the set $[n]\backslash V_1$. Clearly, 
$$\left\{ \max_{v \in \{ r, \ldots, n \} } d_{v}(\cG_{n, p}) \leq (1-\varepsilon_r)  L_p \right\} \subseteq \left\{ \max_{v \in V_1} d_v^+(\cG_{n, p}) \leq (1-\varepsilon_r) L_p \right\}.$$
This implies,  since $\{d_v^+(\cG_{n, p})  : v \in V_1\}$ is a collection of independent $\dBer(\ceil{\kappa n}, p)$ random variables, 
\begin{align*}
\P\left(\max_{v \in \{r, \ldots, n\}} d_{v}(\cG_{n, p}) \leq (1-\varepsilon_r) L_p \right) &\leq \P\left( \max_{v \in V_1} d_v^+(\cG_{n, p}) \leq (1-\varepsilon_r) L_p \right) \nonumber \\ 
& \leq \left(1- \P\left(\dBin(\ceil{\kappa n}, p) > (1-\varepsilon_r)  L_p \right) \right)^{n-\ceil{\kappa n}} \nonumber \\
& \leq \left(1- e^{- (1-\varepsilon_r )\log n +o(\log n) } \right)^{n-\ceil{\kappa n}},  
\end{align*}
by \eqref{eq:dV_II} (with  $1+\varepsilon_a$ replaced by $1-\varepsilon_r$). {This implies, from \eqref{eq:deglt_pf}, 
\begin{align*} 
\P(\cG_{n, p} \in \mathrm{degLT}_r(\bm \varepsilon))  \leq  n^r \left(1-\frac{1}{n^{1-\varepsilon_r +o(1)}}\right)^{n-\ceil{\kappa n}}  \leq  e^{-  (1+o(1)) (1 - \kappa) n^{ \varepsilon_r + o(1)} }, 
\end{align*}
which completes the proof of upper bound, by taking logarithm twice and limit as $n \rightarrow \infty$ followed by $\kappa \rightarrow 0$.} \\

For the lower bound $\P(\cG_{n, p} \in \mathrm{degLT}_r(\bm \varepsilon))$, fix a vertex $v \in [n]$ and note that 
\begin{align}
\P(\cG_{n, p} \in \mathrm{degLT}_r(\bm \varepsilon)) & \geq \P(d_v(\cG_{n, p}) \leq (1- \varepsilon_r) L_p)^n \tag*{(by FKG inequality, see Lemma \ref{lem:degfkg})} \nonumber\\
& \geq {\P( \dBin(n, p) \leq (1-\varepsilon_r) L_p)^n}  \nonumber \\
\label{eq:deg_r_pf}&\geq \left(1- e^{- n I_{p}(p+h_p(\varepsilon_r)) } \right)^n,  
\end{align}
by Chernoff's inequality, since $L_p \gg n p$, where $h_p(\varepsilon_r):=(1-\varepsilon_r) L_p/n-p$. Now, as in \eqref{eq:h} and \eqref{eq:LplogLp}, using $h_p(\varepsilon_r) \gg p$, it follows that 
$$n I_{p}(p+h_p(\varepsilon_r)) =(1+o(1)) n h_p(\varepsilon_r) \log (h_p(\varepsilon_r)/p)=(1-\varepsilon_r) \log n + o(\log n).$$ This implies, from \eqref{eq:deg_r_pf}, 
\begin{align*} 
\P(\cG_{n, p} \in \mathrm{degLT}_r(\bm \varepsilon))  \geq  \left(1-\frac{1}{n^{1-\varepsilon_r +o(1)}}\right)^n  \geq e^{- (1+o(1))n^{\varepsilon_r + o(1)}}. 
\end{align*}
This completes the proof of Proposition \ref{ppn:deglt}. \hfill $\Box$ \\

We finish the discussion with a justification of the first inequality in \eqref{eq:deg_r_pf} above. As mentioned, this is a consequence of the well-known FKG inequality for product measures between two decreasing events \cite[Chapter 2]{inequality}. In our context, an event $\cD \subseteq \sG_n$ is said to be {\it decreasing} if for two graphs $F_1=([n], E(F_1)), F_2=([n], E(F_2)) \in \sG_n$, with $E(F_2) \subset E(F_1)$, $F_1 \in \cD$ implies $F_2 \in \mathcal D$. Then the FKG inequality states that if $\cD_1, \cD_2 \subseteq \sG_n$ are two decreasing events, $\P(\cD_1| \cD_2) \geq \P(\cD_1)$.

\begin{lem}\label{lem:degfkg} For any $k>0$ fixed,
$$\P\left( d_{(1)}(\cG_{n, p}) \leq k \right) \geq \P( d_1(\cG_{n, p}) \leq k)^n.$$
\end{lem}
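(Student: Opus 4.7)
The plan is to write the event $\{d_{(1)}(\cG_{n,p}) \leq k\}$ as an intersection of $n$ single-vertex degree events and invoke the FKG inequality stated just above the lemma. Concretely, for each $i \in [n]$ set $\cD_i := \{F \in \sG_n : d_i(F) \leq k\}$; then
\begin{equation*}
\{d_{(1)}(\cG_{n,p}) \leq k\} = \bigcap_{i=1}^n \cD_i,
\end{equation*}
since the maximum degree is at most $k$ exactly when each individual degree is at most $k$.

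First I would verify that each $\cD_i$ is a decreasing event in the sense defined after the lemma. If $F_1, F_2 \in \sG_n$ with $E(F_2) \subseteq E(F_1)$, then clearly $d_i(F_2) \leq d_i(F_1)$ for every vertex $i$, so $F_1 \in \cD_i$ implies $F_2 \in \cD_i$. Consequently all the $\cD_i$ are decreasing subsets of $\sG_n$ under the natural product measure inherited from $\cG_{n,p}$ (each edge present independently with probability $p$).

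Next, I would iterate the two-event FKG statement. For $n \geq 2$, the intersection $\bigcap_{i=2}^n \cD_i$ is itself a decreasing event (an intersection of decreasing events is decreasing), so the stated form of FKG yields
\begin{equation*}
\P\!\left(\bigcap_{i=1}^n \cD_i\right) = \P\!\left(\cD_1 \cap \bigcap_{i=2}^n \cD_i\right) \geq \P(\cD_1)\,\P\!\left(\bigcap_{i=2}^n \cD_i\right).
\end{equation*}
Induction on $n$ then gives $\P(\bigcap_{i=1}^n \cD_i) \geq \prod_{i=1}^n \P(\cD_i)$. By the vertex-exchange symmetry of $\cG_{n,p}$, every $\P(\cD_i)$ equals $\P(d_1(\cG_{n,p}) \leq k)$, which produces the desired bound
\begin{equation*}
\P(d_{(1)}(\cG_{n,p}) \leq k) \geq \P(d_1(\cG_{n,p}) \leq k)^n.
\end{equation*}

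There is essentially no obstacle here; the only small point to be careful about is that the ambient product structure on which FKG is applied is the one coming from the $\binom{n}{2}$ independent edge indicators, not any product over vertices, and each $\cD_i$ is genuinely a decreasing function of those edge indicators (since removing an edge incident to $i$ can only decrease $d_i$ and removing one not incident to $i$ leaves $d_i$ unchanged). This verification, combined with the inductive use of the two-event FKG inequality quoted in the paper, completes the argument.
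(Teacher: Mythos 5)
Your argument is correct and is essentially the same as the paper's: both write the maximum-degree event as an intersection of the single-vertex decreasing events $\cD_i$, peel off one factor at a time via the two-event FKG inequality, and use vertex-exchange symmetry to identify all the factors. Your explicit check that each $\cD_i$ is decreasing with respect to the $\binom{n}{2}$ independent edge indicators is exactly the verification the paper leaves implicit.
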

\begin{proof} 
Note that for every $1 \leq s \leq n$, the events $\{\max_{a \in [s]}d_{a}(\cG_{n, p}) \leq k \}$ and $\{d_s(\cG_{n, p}) \leq k\}$ are decreasing events. 
Therefore, iterated application of the FKG inequality gives, 
\begin{align*}
\P\left( d_{(1)}(\cG_{n, p}) \leq k \right) &\geq \P\left( \max_{a \in [n-1]}d_a(\cG_{n, p}) \leq k \right)\P( d_n(\cG_{n, p}) \leq k) \geq \P( d_1(\cG_{n, p}) \leq k)^{n},
\end{align*}
completing the proof of the lemma. 
\end{proof}

\section{Proof of Theorem \ref{thm:1ut}}
\label{sec:ut_pf} 

In this section we prove Theorem \ref{thm:1ut}. The section is organized as follows:   We start by recalling some elementary facts about the largest eigenvalue of a graph in Section  \ref{sec:degree}. 
In Section \ref{geom123} we prove some key structural results, including properties of sub-graphs induced by high degree vertices and the number of edge-disjoint cycles passing through various vertices. The proof of Theorem \ref{thm:1ut} is then presented in Section \ref{sec:ut_pf_I}. It has two cases: (1) where  $e^{-(\log \log n)^2} \leq n p \ll \sqrt{\log n/\log \log n}$ and (2) for  $(\log \log n)^2  \leq \log(1/n p)  \ll \log n$.

\subsection{Preliminaries}
\label{sec:degree}

We begin by recalling the following lemma from \cite{KS} which list some elementary properties of the maximum eigenvalue of a graph. 

\begin{lem}\label{lem:gfact}\cite[Proposition 3.1]{KS}
For any graph $F=(V(F), E(F))$ the following hold: 
\begin{enumerate}
\item[(a)] The maximum eigenvalue $\lambda_1(F)$ satisfies
$$\max\left\{\sqrt{d_{(1)}(F)}, \frac{2|E(F)|}{|V(F)|} \right\}\leq \lambda_1(F)\leq d_{(1)}(F),$$
where $d_{(1)}(F)$ is the maximum degree of $F$. 

\item[(b)] If $E(F) =\bigcup_{s=1}^K E(F_s)$, then $\lambda_1(F)\leq \sum_{s=1}^K \lambda_1(F_s)$. 

\item[(c)] If $F$ is a tree, then  $$\lambda_1(F)\leq \min\left\{2\sqrt{d_{(1)}(F)-1},\sqrt{|V(F)|-1} \right\}.$$ In particular, if $F$ is a star-graph with $d_{(1)}(F)+1$ vertices, then $\lambda_1(F)=\sqrt{d_{(1)}(F)}$.
\end{enumerate}
\end{lem}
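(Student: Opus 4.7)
My plan is to treat the three parts separately, since each reduces to a standard technique: part (a) to the Rayleigh quotient, part (b) to Perron–Frobenius monotonicity combined with triangle inequality for the spectral norm, and part (c) to classical tree-spectral bounds.

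\textbf{Part (a).} For the lower bound $\lambda_1(F)\ge 2|E(F)|/|V(F)|$, I would plug in the all-ones test vector into the Rayleigh quotient: $\mathbf 1^\top A(F)\mathbf 1/(\mathbf 1^\top\mathbf 1)=2|E(F)|/|V(F)|$. For the bound $\lambda_1(F)\ge\sqrt{d_{(1)}(F)}$, let $v$ be a vertex of maximum degree $d=d_{(1)}(F)$ and take the test vector $x$ with $x_v=\sqrt d$, $x_u=1$ for each neighbor $u$ of $v$, and $0$ elsewhere. A direct computation gives $x^\top A(F)x\ge 2\sqrt d\cdot d$ and $x^\top x=2d$, so $\lambda_1(F)\ge\sqrt d$. (Equivalently, one can observe that the star $K_{1,d}$ is a subgraph of $F$ supported on a set of $d+1$ vertices and invoke the tight spectrum of the star from part (c).) For the upper bound $\lambda_1(F)\le d_{(1)}(F)$, one uses $\lambda_1(A)\le \|A\|_\infty=\max_v\sum_u|A_{vu}|=d_{(1)}(F)$, valid for any symmetric matrix.

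\textbf{Part (b).} Each edge of $F$ appears in at least one $F_s$, so entrywise $A(F)\le\sum_{s=1}^K A(F_s)$ (all matrices being nonnegative). By Perron–Frobenius monotonicity, $\lambda_1(A(F))\le\lambda_1\bigl(\sum_s A(F_s)\bigr)$. Since $\lambda_1$ coincides with the operator norm for symmetric nonnegative matrices, the triangle inequality gives $\lambda_1\bigl(\sum_s A(F_s)\bigr)\le\sum_s\lambda_1(A(F_s))$, completing the proof.

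\textbf{Part (c).} For a tree $F$ on $n=|V(F)|$ vertices with maximum degree $d=d_{(1)}(F)$, the bound $\lambda_1(F)\le\sqrt{n-1}$ follows from Hong's inequality $\lambda_1(F)^2\le 2|E(F)|-|V(F)|+1$, valid for any connected graph, together with $|E(F)|=n-1$. The bound $\lambda_1(F)\le 2\sqrt{d-1}$ is the classical tree spectral radius estimate: one approach is to note that any finite tree of maximum degree $d$ embeds into the $d$-regular infinite tree, whose adjacency operator has spectral radius $2\sqrt{d-1}$, and then compare Rayleigh quotients; alternatively, one can count closed walks via the recursion $\lambda_1^{2k}\le\sum_v(A^{2k})_{vv}$, bounded by the number of walks of length $2k$ in the infinite $d$-regular tree, which grows like $(2\sqrt{d-1})^{2k}$ up to polynomial factors. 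Finally, for the star $K_{1,d}$ the adjacency matrix has rank $2$, and direct computation (or solving the characteristic polynomial of its $2\times 2$ nontrivial block) yields eigenvalues $\pm\sqrt d$ and $n-2$ zeros.

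The main obstacle is establishing the two tree-specific bounds in (c) cleanly without invoking too much external machinery. The bound $\lambda_1\le\sqrt{n-1}$ requires Hong's inequality (or an equivalent argument using Cauchy–Schwarz and $\sum_vd_v^2\le 2|E|(n-1)/(n-1)$-type manipulations), and the bound $2\sqrt{d-1}$ requires either the walk-counting argument or a universal-cover comparison, both of which are standard but not one-line.
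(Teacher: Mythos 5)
The paper does not give its own proof of this lemma; it is cited verbatim from Krivelevich and Sudakov \cite[Proposition 3.1]{KS}, so there is no internal argument to compare against. Your arguments are correct and are essentially the standard ones: Rayleigh-quotient test vectors for the two lower bounds in (a) (the all-ones vector for $2|E(F)|/|V(F)|$, and the weighted star indicator for $\sqrt{d_{(1)}(F)}$), the row-sum bound $\lambda_1 \le \|A\|_\infty = d_{(1)}(F)$ for the upper bound in (a), entrywise domination plus the triangle inequality for the spectral norm in (b), and the classical tree bounds in (c).

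One small streamlining of (c) is worth noting. Since a tree is bipartite, its spectrum is symmetric about zero, so $\lambda_1(F) = -\lambda_{|V(F)|}(F)$ and hence $2\lambda_1(F)^2 \le \sum_i \lambda_i(F)^2 = \mathrm{tr}(A(F)^2) = 2|E(F)| = 2(|V(F)|-1)$; this yields $\lambda_1(F) \le \sqrt{|V(F)|-1}$ without appealing to Hong's inequality and is closer in spirit to the moment computations used elsewhere in the paper (see the argument around \eqref{eq:pi}). The other bound $\lambda_1(F) \le 2\sqrt{d_{(1)}(F)-1}$ does genuinely require the universal-cover embedding into the $d$-regular infinite tree or the closed-walk counting argument you describe; note that it fails vacuously when $d_{(1)}(F)=1$ (a single edge has $\lambda_1 = 1 > 0$), but that degenerate case never arises in the paper's applications where the relevant maximum degrees diverge.
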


\subsection{Geometric Properties}\label{geom123} As alluded to in Section \ref{sec:pf_outline}, the proof hinges on crucial geometric statements proved relying on estimates from \cite{KS}. To this end, let $X$ be the set of vertices in $\cG_{n, p}$ with degree larger than $np(1+1/\log\log n)+\Delta_p^{1/3}$, where $\Delta_p$ is defined in \eqref{eq:degreep}. Then define the following two events: 
\begin{align}\label{eq:A1}
\cA_1=\{\exists \text{ a cycle } C_L  \text{ in } \cG_{n, p}, \text{ with }  L \geq \log^{\frac{5}{6}}n,\text{ such that } |V(C_L) \cap X| \geq L/2 \}.
\end{align}
and
\begin{align}\label{eq:A2}
\cA_2=\{\exists~v \in [n] \text{ such that the number of neighbors of } v \text{ in } X \text{ is greater than } \Delta_p^{7/8}\}.
\end{align}
In the regime $p \geq e^{-(\log \log n)^2}/ n$, Krivelevich and Sudakov \cite[Lemma 2.3]{KS} showed, the probability of the event $\cA_2$ converges to zero. For the same range of $p$, they also showed that the event that $\cG_{n, p}$ has a cycle which intersects $X$ in more than half of its vertices has probability going to zero. Following their proof it easy to see that the event $\cA_2$ has probability converging to 0 at a super polynomial rate, that is, it is negligible in the upper tail large deviations scale. A straightforward modification of their proof also shows the same for the event $\cA_1$. (Note that in $\cA_1$ we only consider `long cycles' intersecting $X$ in more than half its vertices, unlike all cycles in \cite[Lemma 2.3]{KS}). This is summarized in the following lemma. The proof is given in Appendix \ref{sec:largecycle_pf}.\footnote{The choices of the exponents in the definitions of the events $\cA_1$ and $\cA_2$,  which might seem mysterious to the reader, are simply guided by their appearances in \cite{KS}. In fact, as will be evident from the proofs, there is significant slack in their choices. }

\begin{lem}\label{lem:largecycle} For $p \geq e^{-(\log \log n)^2}/n$  and $\cA_1$ and $\cA_2$ as defined above, 
$$\P\left(\cA_1 \bigcup \cA_2 \right)=  e^{-\Omega(\log^{\frac{17}{16}} n)}.$$ 
\end{lem}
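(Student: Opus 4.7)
\medskip
\noindent\textbf{Proof plan.} The approach is to follow the strategy of \cite[Lemma 2.3]{KS}: a first-moment (union) bound combined with a Chernoff estimate on the binomial distribution. The core quantitative input is a sharp upper bound on the probability that a fixed vertex lies in $X$: applying the multiplicative Chernoff bound to $d_v \sim \dBin(n-1,p)$ exceeding $np(1 + 1/\log\log n) + \Delta_p^{1/3}$, and using $\Delta_p = (1+o(1))L_p$ from Lemma \ref{lem:degreep}(b) together with the hypothesis $p \geq e^{-(\log\log n)^2}/n$, one obtains a stretched-exponentially small bound
\[
q \;:=\; \max_{v\in [n]}\P(v \in X) \;\leq\; \exp\bigl(-\Omega\bigl((\log n)^{1/3+o(1)}\bigr)\bigr),
\]
where the exponent $1/3$ mirrors the power $\Delta_p^{1/3}$ in the definition of $X$.

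\medskip
\noindent The bound on $\P(\cA_2)$ is then exactly the content of \cite[Lemma 2.3]{KS}: one union-bounds over $v \in [n]$ and subsets $S \subseteq [n]\setminus\{v\}$ of size $k \geq \Delta_p^{7/8}$, and estimates $\P(S \subseteq N(v) \cap X) \leq p^k q_{+}^{k}$, where $q_{+} = (1+o(1))q$ absorbs the deterministic $+1$ shift in the degree of each $u \in S$ caused by conditioning on $u \sim v$. Summation over $k$ yields the claimed bound $\exp(-\Omega((\log n)^{17/16}))$. For $\cA_1$ I would adapt this to long cycles by decomposing $\cA_1 \subseteq \bigcup_{L \geq (\log n)^{5/6}} \cA_1^{(L)}$, where $\cA_1^{(L)}$ is the event restricted to cycles of length exactly $L$, and union-bounding within each slice. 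For each $L$ there are at most $n^L/(2L)$ labeled cycles, each present in $\cG_{n,p}$ with probability $p^L$. Conditioning on the $L$ cycle edges, the remaining degrees of the cycle vertices (counting only edges outside the cycle) are \emph{independent} binomial random variables, while the cycle edges contribute only a deterministic $+2$ per vertex, an $O(1)$ shift absorbed into the Chernoff bound. Therefore the conditional probability that at least $\lceil L/2 \rceil$ vertices of the cycle lie in $X$ is at most $\binom{L}{\lceil L/2 \rceil} q_{+}^{\lceil L/2 \rceil} \leq (2\sqrt{q_{+}})^{L}$, giving
\[
\P\bigl(\cA_1^{(L)}\bigr) \;\leq\; \frac{(2np\sqrt{q_{+}})^{L}}{2L}.
\]
Since $np \ll \sqrt{\log n/\log\log n}$ and $q_{+}$ satisfies the same bound as $q$, the sum over $L \geq (\log n)^{5/6}$ is dominated by its first term and bounded by $\exp(-\Omega((\log n)^{5/6 + 1/3 + o(1)})) = \exp(-\Omega((\log n)^{7/6 + o(1)}))$, comfortably stronger than the required $\exp(-\Omega((\log n)^{17/16}))$.

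\medskip
\noindent The main technical subtlety lies in extracting the stretched-exponential bound on $q$ uniformly across the whole range $e^{-(\log\log n)^2}/n \leq p \ll \sqrt{\log n/\log\log n}/n$: the relative sizes of $np/\log\log n$ and $\Delta_p^{1/3}$ in the threshold change character as $p$ varies, so one must calibrate Chernoff differently depending on whether $np$ is bounded or grows. This calibration is already carried out in \cite[Lemma 2.3]{KS}; the new ingredient required here is simply to wrap a cycle-length union bound around their estimate and to note that the additional combinatorial constraint (at least $L/2$ vertices of a cycle of length $L \geq (\log n)^{5/6}$ lying in $X$) generates enough extra geometric decay to absorb the $(np)^L$ factor while preserving a super-polynomial bound.
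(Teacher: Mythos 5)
Your overall strategy — union bound over cycle length and over labeled cycles, then estimating the conditional probability that at least half the cycle vertices lie in $X$ — matches the paper's strategy. However, there is a genuine gap in the key step of the $\cA_1$ bound. You assert that, conditional on the presence of the $L$ cycle edges, ``the remaining degrees of the cycle vertices (counting only edges outside the cycle) are \emph{independent} binomial random variables.'' This is false. Two cycle vertices $v_i$ and $v_j$ that are non-adjacent on the cycle share the potential chord edge $\{v_i, v_j\}$, which contributes to both degrees. Consequently the indicators $\{v_i \in X\}$ are \emph{positively} associated, so the product bound $\binom{L}{\lceil L/2\rceil} q_+^{\lceil L/2\rceil}$ is only a \emph{lower} bound for the probability of the joint event, not an upper bound. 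Alternatively, if ``edges outside the cycle'' is read as ``edges to non-cycle vertices'' (which are indeed independent), then you have dropped the chord contribution to each degree, and since $d_{v_i}$ strictly exceeds the quantity you are bounding, you have again underestimated the probability that $d_{v_i}$ clears the threshold. Either reading leaves the upper bound unjustified.

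The paper avoids this by never conditioning the degrees individually: for a set $T$ of $t$ cycle vertices, the identity $\sum_{v\in T} d_v = 2|E(\cG_{n,p}[T])| + |E(T, V\setminus T)|$ shows that if every $v\in T$ has degree at least some threshold $d_0$, then one of $|E(\cG_{n,p}[T])|$ or $|E(T, V\setminus T)|$ is at least $d_0 t/3$; each of these two quantities is a single binomial random variable (counts of disjoint families of potential edges) whose tail can be bounded directly by Chernoff. This cut-versus-induced-edges dichotomy is exactly what handles the dependence you've swept under the rug, and it also makes the remark ``conditioning on the $\leq 2t$ cycle edges does not change the exponent'' painless, since it shifts the two edge counts by at most $2t$. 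If you adopt this device in place of the independence claim, the rest of your slice-by-cycle-length union bound goes through essentially as you wrote it, and in fact it is then very close to the paper's argument (including the two-case split according to whether $np \leq \log^{1/4}n$ or $np \geq \log^{1/4}n$, which determines whether $\Delta_p^{1/3}$ or $np/\log\log n$ dominates the threshold and hence which Chernoff regime to calibrate).
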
 

Next, we show, the probability that there are many vertex joint cycles passing through a vertex is also negligible. To this end, let $M=\log^{5/12} n$ and define $\cB$ to be the event that there exists a vertex in $\cG_{n, p}$ with $M$ edge disjoint cycles, each of size at most $\log^{5/6}n$, passing through that vertex. Also, denote by $\cB_0$ the event that there exists a vertex in $\cG_{n, p}$ with $M_0=L_{p}^{5/12}$ edge disjoint cycles passing through that vertex, without any restriction on the cycle size.
 
\begin{lem}\label{lem:blem}  For $n p \leq \log^{\frac{1}{2}} n$, $\P(\cB)=e^{-\Omega(\log^\frac{17}{12}n)}$. Moreover, for $np < 1$, $\P(\cB_0)=e^{-\omega(\log n)}$.
\end{lem}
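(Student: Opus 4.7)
The plan is a first moment bound on the number of ordered $M$-tuples (respectively $M_0$-tuples) of edge-disjoint cycles through a generic vertex $v$, combined with a union bound over $v \in [n]$. Fix $v$, let $Y_v$ denote the maximum number of pairwise edge-disjoint cycles of length at most $\ell := \log^{5/6} n$ through $v$, and let $T_v^{\mathrm{ord}}$ count the ordered $M$-tuples of such cycles in $\cG_{n, p}$. Since $\{Y_v \geq M\} \subseteq \{T_v^{\mathrm{ord}} \geq M!\}$ (every $M$ edge-disjoint cycles yield $M!$ orderings), Markov's inequality gives $\P(Y_v \geq M) \leq \mathbb{E}[T_v^{\mathrm{ord}}]/M!$.

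The key estimate is that for fixed lengths $k_1, \ldots, k_M \in [3, \ell]$, the number of ordered tuples of cycle-vertex-sequences through $v$ is at most $\prod_i n^{k_i-1}/2$, and edge-disjointness forces all $\sum_i k_i$ edges to be distinct, so the joint probability of appearance in $\cG_{n, p}$ is exactly $p^{\sum_i k_i}$. Summing over lengths gives
\[
\mathbb{E}[T_v^{\mathrm{ord}}] \;\leq\; \frac{1}{(2n)^M}\left(\sum_{k=3}^{\ell}(np)^k\right)^{\!M}.
\]
For $np \leq \log^{1/2} n$, the inner sum is bounded by $\ell\,(np)^\ell + 2(np)^3 = e^{O(\log^{5/6}n\,\log\log n)}$ (using the geometric tail when $np<1$ and the last term when $np\geq 1$). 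Combining with the union bound over $v$ and $\log M! = \Theta(M\log M)$ via Stirling gives
\[
\log \P(\cB) \;\leq\; \log n - M\log n + O(M\ell\log\log n) + O(M\log M).
\]
With $M = \log^{5/12} n$, the leading term is $-M\log n = -\log^{17/12} n$, while the two error terms are $O(\log^{5/4}n\,\log\log n)$ and $O(\log^{5/12}n\,\log\log n)$, both $o(\log^{17/12}n)$. Hence $\P(\cB) = e^{-\Omega(\log^{17/12}n)}$. For $\cB_0$ under $np<1$ (bounded away from $1$), the analogous sum is $\sum_{k\geq 3}(np)^k = (np)^3/(1-np) = O(1)$, yielding $\log\P(\cB_0) \leq -(1-o(1))M_0 \log n = -\omega(\log n)$, since $M_0 = L_p^{5/12} \to \infty$ under $\log n \gg \log(1/np)$ by Lemma \ref{lem:degreep}.

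The main obstacle is the tightness of the $17/12$ exponent for $\cB$: the error $M\ell \cdot \tfrac{1}{2}\log\log n = \tfrac{1}{2}\log^{5/4}n\,\log\log n$ arising from $\ell\,(np)^\ell$ must be absorbed by the leading $-M\log n = -\log^{17/12}n$, which succeeds because $5/4 = 15/12 < 17/12$ combined with $\log\log n$ being sub-polynomial in $\log n$. This leaves little slack and explains the specific choices $M = \log^{5/12}n$ and $\ell = \log^{5/6}n$. A secondary subtlety in $\cB_0$ when $np \to 1^-$ is the implicit requirement $\log(1/(1-np)) = o(\log n)$ to preserve the conclusion, but this is automatic in the sparsity regime of interest.
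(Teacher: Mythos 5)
Your proof is correct and follows essentially the same first-moment argument as the paper: bound the expected number of (ordered) tuples of edge-disjoint short cycles through a fixed vertex by $n^{-M}\bigl(\sum_k (np)^k\bigr)^M$, estimate the inner sum using $np\le\log^{1/2}n$, and finish with a union bound over $v\in[n]$. The extra refinements you include — dividing by $M!$, the factor $1/2$ per cycle, and the explicit note that $\log(1/(1-np))$ must be $o(\log n)$ for the $\cB_0$ geometric-sum bound (which indeed holds automatically in the regime of application where $np\le e^{-(\log\log n)^2}$) — are not used in the paper but only tighten the bound, so there is no substantive difference in approach.
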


\begin{proof} To begin with suppose $n p \leq \log^{1/2}n$. Note that the probability that a given vertex $v \in [n]$ has $M=\log^{5/12} n$ edge disjoint cycles passing through it, each of size at most $\log^{5/6}n$ is bounded by 
\begin{align*}
\left(\sum_{w=1}^{\log^{\frac{5}{6}}n} n^{w-1} p^{w} \right)^M & \leq \frac{1}{n^{M}} \left(\sum_{w=1}^{\log^{\frac{5}{6}}n} \log^{w/2}n \right)^M \tag*{(using $n p \leq \log^{\frac{1}{2}} n $)}\nonumber \\ 
& \lesssim \frac{(\log n)^{\frac{M}{2} \log^{\frac{5}{6}}n}}{n^{M}} =  \frac{(\log n)^{\frac{1}{2}\log^{\frac{15}{12}}n}}{n^{\log^{\frac{5}{12}} n}} = e^{-\Omega(\log^\frac{17}{12}n)} ,
\end{align*}
(To see the first term, note the number of $M$ tuples of edge disjoint cycles passing through a given vertex $v,$  of sizes $w_1,w_2,\ldots w_M,$ respectively, is at most $n^{\sum_{i=1}^M (w_i- 1)}$. The probability of any given tuple occurring in the graph $\cG_{n,p},$ owing to edge disjointness of the cycles is $p^{\sum_{i=1}^M w_i}.$ Therefore, summing over all possible values of $w_1,w_2,\ldots w_M,$ between $1$ and $M$ yields the first term.) 
Then, by a union bound over the choice of the vertex $v\in [n]$, it follows that $\P(\cB)=n e^{-\Omega(\log^\frac{17}{12}n)} =e^{-\Omega(\log^\frac{17}{12}n)}$. 

Next, suppose $n p:=c < 1$. Then, as above, the probability that a given vertex $v \in [n]$ has $M_0=L_p^{5/12}$ edge disjoint cycles (each has size trivially at most $n$) passing through it is at most 
\begin{align*}
\left(\sum_{w=1}^{n} n^{w-1} p^{w} \right)^{M_0} & \leq \frac{1}{n^{M_0}} \left(\sum_{w=1}^{n} c^{w} \right)^{M_0}  \lesssim \frac{1}{n^{M_0}} = e^{-\omega(\log n)}.
\end{align*} 
since $L_p\gg 1.$
Hence, by a union bound $\P(\cB_0)=n e^{-\omega(\log n)} =e^{-\omega(\log n)}$. 
\end{proof}

Next, denote by $\cC$ the event that $\cG_{n, p}$ contains a vertex which has at least $\Delta_p^{1/3}$ other vertices of $\cG_{n, p}$, each with degree greater than $\Delta_p^{3/4}$, within distance at most two. The proof of \cite[Lemma 2.4]{KS} shows that the probability $\cC$ is also negligible in the upper tail large deviations scale. 

\begin{lem}\label{lem:clem}\cite[Lemma 2.4]{KS}   For $e^{-(\log \log n)^2} \leq n p \leq \log ^{\frac{1}{2}} n $, $\P(\cC)=e^{-\Omega(\log^{\frac{25}{24}}n)}$. 
\end{lem}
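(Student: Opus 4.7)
The plan is to adapt the argument of \cite[Lemma 2.4]{KS} to the present regime via explicit Chernoff estimates combined with \eqref{eq:h} and \eqref{eq:LplogLp}. Set $k := \lceil \Delta_p^{1/3}/2 \rceil$ and split $\cC \subseteq \cC_{\mathrm{I}} \cup \cC_{\mathrm{II}}$, where $\cC_{\mathrm{I}}$ is the event that some $v \in [n]$ has at least $k$ vertices of degree greater than $\Delta_p^{3/4}$ among its neighbors, and $\cC_{\mathrm{II}}$ the event that some $v$ has at least $k$ such vertices at graph distance exactly two. The key pointwise input is
\[
\theta := \P\bigl(d_v(\cG_{n, p}) > \Delta_p^{3/4}\bigr) \leq \exp\bigl(-\Omega(\Delta_p^{-1/4} \log n)\bigr),
\]
obtained from Chernoff together with the identity $\Delta_p \log(\Delta_p/np) = (1+o(1)) \log n$ (from Lemma \ref{lem:degreep}(b) and the computation of \eqref{eq:LplogLp}) and a direct comparison showing $\Delta_p^{3/4}\log(\Delta_p^{3/4}/np) = \Omega(\Delta_p^{-1/4}\log n)$.

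For $\cC_{\mathrm{I}}$, I would apply a union bound over $v$ and a size-$k$ subset $S \subset [n]\setminus\{v\}$. On the event, $v$ is adjacent to every $u \in S$ and each such $u$ has $d_u > \Delta_p^{3/4}$. After conditioning on all edges inside $\{v\} \cup S$, the residual degrees of the vertices of $S$ to $[n] \setminus (\{v\} \cup S)$ are independent binomials, and since $|S|+1 \ll \Delta_p^{3/4}$ the additive shift in the Chernoff estimate is absorbed into $1+o(1)$ factors. This yields
\[
\P(\cC_{\mathrm{I}}) \leq n \binom{n-1}{k} p^k \theta^k \leq n\,(np\cdot \theta)^k.
\]
Because $np \leq \log^{1/2} n$, the factor $np$ is dominated by $\theta$, so the bound becomes $n \exp(-\Omega(\Delta_p^{1/12} \log n))$. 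The hypothesis $np \geq e^{-(\log\log n)^2}$ gives $L_p \geq \log n/(2(\log\log n)^2)$, hence $\Delta_p^{1/12}\log n \geq \log^{25/24} n$ for all large $n$.

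For $\cC_{\mathrm{II}}$, the argument is analogous, but one additionally records for each $u$ a common neighbor $w_u$ of $v$ and $u$. A union bound over the assignments $u \mapsto w_u$ contributes an extra factor of $n^k$ and an extra $p^k$ for the edges $\{w_u u\}$, producing
\[
\P(\cC_{\mathrm{II}}) \leq n\binom{n-1}{k} n^k p^{2k} \theta^k \leq n \bigl((np)^2 \theta\bigr)^k.
\]
Since $(np)^2 \leq \log n$, this is again swallowed by $\theta$ and yields the same overall bound.

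The main technical obstacle I anticipate is the delicate bookkeeping of the relative entropy estimate that converts the heuristic $\theta \approx n^{-c/\Delta_p^{1/4}}$ into the precise exponent $25/24$: one must verify that the ratio $\log(\Delta_p^{3/4}/np)/\log(\Delta_p/np)$ stays bounded away from zero uniformly for $e^{-(\log\log n)^2} \leq np \leq \log^{1/2} n$, with the sub-case $np < 1$ (where $-\log(np)$ is positive and may even dominate $\log\Delta_p$) requiring a separate but easier argument in which $-\log(np)$ itself drives the tail. Everything else reduces to applying the now-standard template of \cite{KS}.
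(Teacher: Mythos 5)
Note first that the paper does not supply its own proof of Lemma~\ref{lem:clem}; it is a direct citation of \cite[Lemma 2.4]{KS}, with the surrounding text merely asserting that the Krivelevich--Sudakov argument gives a super-polynomially small probability. So you are reconstructing the proof, and your decomposition $\cC\subseteq\cC_{\mathrm I}\cup\cC_{\mathrm{II}}$, the tail estimate $\theta\leq\exp(-\Omega(\Delta_p^{-1/4}\log n))$, and the $\cC_{\mathrm I}$ bound are all sound. The obstacle you flag also resolves: throughout $e^{-(\log\log n)^2}\leq np\leq\log^{1/2}n$ one has $\log\Delta_p=(1+o(1))\log\log n$ and $\log(np)\leq\tfrac12\log\log n$, so the ratio $\log(\Delta_p^{3/4}/np)/\log(\Delta_p/np)$ is at least $\tfrac14-o(1)$ for $np\geq 1$ and at least $\tfrac34$ for $np<1$ (both numerator and denominator then being sums of nonnegative terms), hence uniformly bounded away from zero.

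The gap is in the $\cC_{\mathrm{II}}$ estimate. The inequality $\P(\cC_{\mathrm{II}})\leq n\binom{n-1}{k}n^kp^{2k}\theta^k$ is not a valid union bound: the factor $p^{2k}$ tacitly assumes the $k$ length-two paths from $v$ through $w_u$ to $u$ are edge-disjoint, which fails as soon as two of the chosen common neighbours $w_u$ coincide. (If all $w_u$ equal one vertex $w$, the witness uses only $k+1$ edges and the corresponding term should be $p^{k+1}\theta^k$, so the displayed bound can in fact \emph{underestimate} $\P(\cC_{\mathrm{II}})$.) The natural repair is to group by the number $j$ of distinct values among $(w_u)_{u\in S}$: such tuples number at most $\binom{n}{j}\,j^k$ and carry probability $p^{j+k}\theta^k$ (the $k$ edges $\{w_u,u\}$ are always distinct since necessarily $w_u\notin S\cup\{v\}$), giving
\[
\P(\cC_{\mathrm{II}})\;\leq\; n\binom{n-1}{k}\,p^k\theta^k\sum_{j=1}^{k}\frac{(np)^{j}\,j^{k}}{j!}.
\]
The sum is $e^{O(k\log\log n)}=e^{O(\Delta_p^{1/3}\log\log n)}$ (both $\log k$ and $\max(\log(np),0)$ are $O(\log\log n)$ in this range), which is negligible against $\theta^k=e^{-\Omega(\Delta_p^{1/12}\log n)}$ because $\Delta_p^{1/4}\log\log n\ll\log n$, so the conclusion $e^{-\Omega(\log^{25/24}n)}$ survives. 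It is worth noting that a cruder dichotomy on a single threshold for the number of distinct $w_u$ does \emph{not} suffice here: the many-distinct branch needs the threshold to exceed $\Delta_p^{1/4}$ for a super-polynomial exponent, while the pigeonhole branch needs it below $\Delta_p^{1/12}$, and these are incompatible. The bookkeeping over $j$ is therefore genuinely load-bearing, not just a delicacy to be absorbed into $1+o(1)$ factors.
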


The exponents $17/16, 17/12$, and $25/24$ obtained in the above lemmas are useful only to the extent that they are bigger than one, rendering the above events much more unlikely compared to the upper tail event in Theorem \ref{thm:1ut}. 
Therefore, to compute the upper tail probability it suffices to restrict ourselves to the complement of the union of the above events. To this end, denoting $\cA=\cA_1\bigcup \cA_2$, we define  
\begin{equation}\label{eq:utgood}
\cM :=\overline \cA \bigcap \overline \cB \bigcap \overline \cC,
\end{equation}
where $\overline S$ denotes the complement of a set $S$. In other words, the event $\cM$ is the collection of graphs $F \in \sG_n$ satisfying the following four properties: 
\begin{enumerate}
\item There is no cycle of length $L \geq \log^{5/6}n$ in $F$ which intersects $X$ in at least $L/2$ vertices. 
\item The maximum degree of each vertex of $F$ in $X$ is at most $\Delta_p^{7/8}$. 
\item For every vertex in $F$ there is at most $\log^{5/12}n$ edge disjoint cycles, each of size at most $\log^{\frac{5}{6}}n$, passing through that vertex. 
\item There is no vertex which has at least $\Delta_p^{1/3}$ other vertices of $F$, each with degree greater than $\Delta_p^{3/4}$, within distance at most two.
\end{enumerate}

\subsection{Proof of Theorem \ref{thm:1ut}}
\label{sec:ut_pf_I}

We can now use the properties above to prove our main structure theorem, which shows that any graph $F \in \cM$ can decomposed into two components, one consisting of a disjoint union of stars and another component which has negligible spectral norm.\footnote{For two graphs $F_1=([n], E(F_1))$ and $F_2=([n], E(F_2))$ on $[n]:=\{1, 2, \ldots, n\}$ vertices, denote by $F_1\bigcup F_2=([n], E(F_1)\bigcup E(F_2))$, the graph obtained by taking the union of the edges in the two graphs.} The idea behind the proof is to partition  the vertices of $F$ based on degrees, and then iteratively remove cycles incident on the `high' degree vertices to decompose $F$ in to a forest and another component which has spectral norm $o(\sqrt{L_p})$  (which can be ensured by Lemma \ref{lem:largecycle} and \ref{lem:blem}). Then Lemma \ref{lem:clem} can be used to argue that the component of the forest incident on the high degree vertices must be a disjoint union of stars.  The formal proof is presented in two cases: (1) $e^{-(\log \log n)^2} \leq n p \ll \sqrt{\log n/\log \log n}$, and (2)  $(\log \log n)^2  \leq \log(1/n p)  \ll \log n$. We begin with the first case.\\

\noindent {\it Case} 1: $e^{-(\log \log n)^2} \leq n p \ll \sqrt{\log n/\log \log n}$. The following lemma is our key structural result formalizing the above discussion.
\begin{lem}\label{lem:decompose} Suppose  $e^{-(\log \log n)^2} \leq n p \ll \sqrt{\log n/\log \log n}$. Then any graph $F \in \cM$ can be decomposed as $F=F_1 \bigcup F_2$ such that $F_1$ is a disjoint union of stars, the spectral norm $||F_2||= o(\sqrt{L_p})$, and maximum degree $d_{(1)}(F_2)=o(L_p)$. \end{lem}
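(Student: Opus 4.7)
The plan is to construct the decomposition in three stages: a cycle-removal step reducing $F$ to a spanning forest modulo a spectrally negligible residue, a star-extraction step identifying $F_1$ inside the forest, and a verification of the spectral and degree bounds on $F_2$.

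\emph{Cycle removal.} I first process long cycles (length $\geq \log^{5/6} n$) and short cycles (length $< \log^{5/6} n$) separately. For each long cycle $C$ of length $L$, $\overline{\cA_1}$ gives $|V(C) \cap X| < L/2$, so strictly more than $L/2$ of its vertices lie outside $X$; by pigeonhole, two consecutive cycle-vertices both lie outside $X$, and I remove that edge. Iterating, the set $E_{\mathrm{long}}$ of removed long-cycle edges consists only of non-$X$ to non-$X$ edges, whose maximum degree is at most $np(1 + 1/\log\log n) + \Delta_p^{1/3} = o(\sqrt{L_p})$ by Lemma~\ref{equivalence}. For short cycles I appeal to $\overline{\cB}$: a greedy removal, prioritizing at each step an edge lying on the most remaining short cycles, should yield a set $E_{\mathrm{short}}$ with per-vertex degree $O(M) = O(\log^{5/12} n) = o(\sqrt{L_p})$. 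Setting $E_{\mathrm{cyc}} := E_{\mathrm{long}} \cup E_{\mathrm{short}}$ and $T := F \setminus E_{\mathrm{cyc}}$, Lemma~\ref{lem:gfact}(a) yields $||E_{\mathrm{cyc}}|| \leq d_{(1)}(E_{\mathrm{cyc}}) = o(\sqrt{L_p})$, and $T$ is a spanning forest of $F$.

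\emph{Star extraction.} Set $Y := \{v \in [n] : d_v(F) > \Delta_p^{3/4}\}$, the ``very high degree'' vertices. For each $v \in Y$, let $S_v$ denote the star consisting of $v$ together with those $T$-neighbors $w$ satisfying $w \notin Y$ and $w$ not $T$-adjacent to any other $u \in Y$. These stars are pairwise vertex-disjoint by construction, so I set $F_1 := \bigcup_{v \in Y} S_v$.

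\emph{Bounding $F_2$.} Let $F_2 := F \setminus F_1$ and decompose $F_2$ into four pieces: (a) $E_{\mathrm{cyc}}$; (b) $T$-edges with both endpoints in $Y$; (c) $T$-edges from $v \in Y$ to a ``shared'' non-$Y$ vertex $w$ (one adjacent in $T$ to some $u \in Y \setminus \{v\}$); and (d) $T$-edges with both endpoints outside $Y$. By Lemma~\ref{lem:gfact}(b), $||F_2|| \leq \sum_i ||F_2^{(i)}||$. Piece (a) contributes $o(\sqrt{L_p})$ by Stage~1. Piece (b) is a sub-forest of $T$ with maximum degree $\leq \Delta_p^{1/3}$ (by $\overline{\cC}$, applied at distance one), hence norm $o(\sqrt{L_p})$. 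Piece (c) is a sub-forest of $T$; since distinct $Y$-vertices share at most one common $T$-neighbor (as $T$ is a forest), $\overline{\cC}$ bounds the $Y$-side maximum degree by $\Delta_p^{1/3}$, while $\overline{\cA_2}$ bounds the non-$Y$-side maximum degree by $\Delta_p^{7/8}$, so Lemma~\ref{lem:gfact}(c) yields $||F_2^{(c)}|| = O(\Delta_p^{7/16}) = o(\sqrt{L_p})$. Piece (d) is a sub-forest of $T$ with maximum degree $\leq \Delta_p^{3/4}$ by definition of $Y$, so Lemma~\ref{lem:gfact}(c) gives $||F_2^{(d)}|| = O(\Delta_p^{3/8}) = o(\sqrt{L_p})$. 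The same case analysis also yields $d_{(1)}(F_2) = O(\Delta_p^{7/8}) = o(L_p)$.

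\emph{The main obstacle} is the short-cycle removal in Stage~1: $\overline{\cB}$ controls only \emph{edge-disjoint} short cycles through each vertex, while many overlapping short cycles can in principle each force a separate removal incident to a common vertex, so the greedy algorithm must be set up so as to exploit $\overline{\cB}$'s edge-disjointness constraint (via an LP-duality or greedy hitting-set argument) to achieve the per-vertex $O(M)$ bound on $E_{\mathrm{short}}$.
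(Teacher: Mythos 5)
Your overall architecture—cycle removal, then star extraction, then piece-by-piece spectral bounding—mirrors the paper's proof closely, and your star extraction and the spectral/degree bounds on the four pieces of $F_2$ are sound. Your long-cycle handling is actually a clean alternative to the paper's: the paper avoids long cycles altogether by first partitioning $F$ into induced/bipartite pieces ($G_1$ on $X\times X$, $G_3$ on $X_1\times Y$, $G_4$ on $X_2\times Y$) in which every cycle has at least half its vertices in $X$, so $\overline{\cA_1}$ outright forbids long cycles in those pieces; you instead observe that a long cycle avoiding $\cA_1$ must contain a non-$X$-to-non-$X$ edge (your pigeonhole argument is correct) and delete it, which lands those edges inside the already-small-degree induced graph on $[n]\setminus X$.

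The genuine gap is exactly where you flag it, and it is not a small one to wave away with ``LP-duality or greedy hitting-set.'' You propose removing \emph{individual edges} greedily so as to hit all short cycles, and then try to deduce a per-vertex $O(M)$ bound on $E_{\mathrm{short}}$ from $\overline{\cB}$. But $\overline{\cB}$ only caps the maximum number of pairwise \emph{edge-disjoint} short cycles through a vertex, and there is no reason an edge-minimal or degree-minimizing hitting set for the family of short cycles through $v$ should have size $O(M)$: the family can be far larger than $M$ when the cycles overlap, and bounding the hitting number by the packing number requires an Erd\H{o}s--P\'osa-type argument that you neither supply nor cite. The paper sidesteps this entirely by removing \emph{whole cycles one at a time}: at each step pick any remaining cycle and delete all of its edges. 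The deleted cycles are then automatically pairwise edge-disjoint (each is chosen from the graph after all previous deletions), so $\overline{\cB}$ applies verbatim to say at most $M$ of them pass through any vertex, each contributing at most $2$ to that vertex's degree in the removed set, giving $d_{(1)}(E_{\mathrm{short}}) \leq 2M = O(\log^{5/12} n)$ with no optimization or duality argument needed. Replacing your greedy single-edge deletion with this iterative whole-cycle deletion closes the gap and makes the rest of your proof go through.
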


The proof of this lemma is given below. We first show how this can be used to complete the proof of Theorem \ref{thm:1ut}. Throughout the proof we denote $\overline {\bm \delta}=(\bm 1+\bm \delta)^2-\bm 1$, where $\bm 1=(1, 1, \ldots, 1)$ is a vector of length $r$ with all entries 1. \\ 

We start with the lower bound. 
Using $\P(\cG_{n, p} \in \cM)=1-e^{-\omega(\log n)}$ (which follows by combining Lemma \ref{lem:largecycle}, \ref{lem:blem}, and \ref{lem:clem}) and Proposition \ref{ppn:degree_ut} (which gives $\P(\cG_{n, p} \in \mathrm{degUT}_r(\overline {\bm \delta}))=e^{-\Theta(\log n)}$) note that, 
\begin{equation}\label{eq:utMlb}
\P(\cG_{n, p} \in \cM|\cG_{n, p} \in \mathrm{degUT}_r(\overline {\bm \delta})) \rightarrow 1. 
\end{equation}
Now, by Lemma \ref{lem:decompose}, if $F \in \cM \bigcap \mathrm{degUT}_r(\overline {\bm \delta})$, then $F=F_1\bigcup F_2$, where 
\begin{itemize}
\item  $F_2$ satisfies $\lambda_1(F_2)= o(\sqrt{L_p})$ and $d_{(1)}(F_2)=o(L_p)$, and 
\item $F_1$ is a {disjoint union of $\ell$ star graphs} isomorphic to $K_{1, s_1}, K_{1, s_2}, \ldots, K_{1, s_\ell}$, for some $s_1 \geq s_2 \geq \ldots \geq s_\ell $ and $\ell \geq r$. Moreover, $s_a \geq ((1+\delta_a)^2-o(1)) L_p$, for $1 \leq a \leq r$.   
\end{itemize}
The second conclusion follows from the fact that $\cG_{n, p} \in \mathrm{degUT}_r(\overline {\bm \delta})$ while $d_{(1)}(F_2)=o(L_p).$
Therefore, by Lemma \ref{lem:gfact}(c) $\lambda_a(F_1) = \sqrt{s_a} \geq (1+\delta_a+o(1)) \sqrt{L_p}$, and by Weyl's inequality,  
$$\lambda_a(F) \geq \lambda_a(F_1)+\lambda_n(F_2) \geq (1+\delta_a+o(1)) \sqrt{L_p} + o(\sqrt{L_p})=(1+\delta_a + o(1)) \sqrt{L_p},$$
for all $1 \leq a \leq r$. This shows, by adjusting $\overline{\bm \delta}$ by an $o(1)$ term, $$\cM \bigcap \mathrm{degUT}_r((1+o(1)) \overline{\bm \delta}) \subseteq \mathrm{UT}_r( \bm \delta).$$
Hence,
\begin{align}\label{eq:ub_M}
\P(\cG_{n, p} \in \mathrm{UT}_r(\bm \delta )) & \geq \P\left( \cG_{n, p} \in \cM \bigcap \mathrm{degUT}_r( (1+o(1))\overline{\bm \delta})\right) \nonumber \\ 
& = \P(\cG_{n, p} \in \cM| \cG_{n, p} \in \mathrm{degUT}_r( (1+o(1)) \overline{\bm \delta}))  \P(\cG_{n, p} \in \mathrm{degUT}_r( (1+o(1)) \overline{\bm \delta})) \nonumber \\ 
& =(1+o(1))\P(\cG_{n, p} \in \mathrm{degUT}_r( (1+o(1)) \overline{\bm \delta})),
\end{align}
where the last step uses \eqref{eq:utMlb}. 

To obtain a matching upper bound on $\P(\cG_{n, p} \in \mathrm{UT}_r(\bm \delta ))$, note that \eqref{eq:ub_M} along with Proposition \ref{ppn:degree_ut}  shows that $\P(\cG_{n, p} \in \mathrm{UT}_r(\bm \delta)) \geq e^{-\Theta(\log n)}$, which implies, as in \eqref{eq:utMlb}, 
\begin{align}\label{eq:utMub}
\P(\cG_{n, p} \in \cM| \cG_{n, p} \in \mathrm{UT}_r(\bm \delta)) \rightarrow 1. 
\end{align} 
{Moreover, if $F \in \cM  \bigcap \mathrm{UT}_r(\bm \delta)$, then by Lemma \ref{lem:decompose},  $F=F_1\bigcup F_2$, where $F_2$ satisfies $\|F_2\|= o(\sqrt{L_p})$ and $d_{(1)}(F_2)=o(L_p)$, and $F_1$ is a disjoint union of $\ell$ star graphs isomorphic to $K_{1, s_1}, K_{1, s_2}, \ldots, K_{1, s_\ell}$, for some $s_1 \geq s_2 \geq \ldots \geq s_\ell $ and $\ell \geq 0$. Since the spectrum of $F_1$ is the disjoint union of the spectra of its connected components and $\|F_2\|=o(\sqrt{L_p}),$ a simple application of Weyl's inequality along with $F \in \cM  \bigcap \mathrm{UT}_r(\bm \delta)$ implies that
\begin{align}\label{eq:lambdaF1}
\lambda_{a}(F_1) &\geq \lambda_a(F) + o(\sqrt{L_p}) \geq (1+\delta_a)\sqrt{L_p}+o(\sqrt{L_p}).
\end{align} 
Moreover, since the spectrum of a $s$-star $K_{1,s}$ has only  two non-zero eigenvalues $-\sqrt s$ and $\sqrt s$, by \eqref{eq:lambdaF1}, 
$$d_{(a)}(F) \geq d_a(F_1) + o(L_p) \geq (1+\delta_a)^2 L_p + o(L_p),$$ for $1 \leq a \leq r$. Thus, it follows that $F \in \mathrm{degUT}_r(1+o(1)) \overline{\bm \delta})$. Therefore, $\cM  \bigcap \mathrm{UT}_r(\bm \delta) \subseteq  \mathrm{degUT}_r((1+o(1)) \overline{\bm \delta})$, and 
\begin{align}\label{eq:lb_M}
\P(\cG_{n, p} \in \mathrm{UT}_r(\bm \delta)) & = (1+o(1)) \P\left(\cG_{n, p} \in  \cM  \bigcap \mathrm{UT}_r(\bm \delta) \right) \nonumber \\ 
& \leq  (1+o(1))\P(\cG_{n, p} \in \mathrm{degUT}_r((1+o(1)) \overline{\bm \delta})), 
\end{align}
where the first equality uses \eqref{eq:utMub}. The claim in Theorem \ref{thm:1ut} now follows by combining \eqref{eq:ub_M} and \eqref{eq:lb_M} with Proposition \ref{ppn:degree_ut}.  
We finish the proof in Case 1, with the details of the proof of Lemma \ref{lem:decompose}.
\subsubsection*{\textbf{Proof of Lemma \ref{lem:decompose}}} We begin by partitioning the vertices of a graph $F \in \cM$ in to three components $[n]=X_1 \bigcup X_2 \bigcup Y$ as follows: 
\begin{itemize}
\item $X_1$ is the set of vertices in $F$ with degree at least $\Delta^{3/4}_p$ (the `high' degree vertices). 

\item $X_2$ is the set of vertices in degree between  $np(1+1/\log\log n)+\Delta^{1/3}_p$ and $\Delta^{3/4}_p$ (the `moderate' degree vertices). Recall, from the discussion above \eqref{eq:A1}, that $X=X_1 \bigcup X_2$. 

\item $Y=[n]\backslash X$ denotes the remaining set of vertices (the `low' degree vertices). 

\end{itemize}
Let $G_1$ be the induced subgraph of $F$ on the set $X$, $G_2$ the induced subgraph of $F$ on the set $Y$, $G_3$ the bipartite subgraph of $F$ with edges between $X_1$ and $Y$, and $G_4$ the the bipartite subgraph of $F$ with edges $X_2$ and $Y$,  respectively (as shown in  Figure \ref{fig:graph_decomposition}).

\begin{figure}[h]
\centering
\begin{minipage}[l]{1.0\textwidth}
\centering
\includegraphics[width=4.95in]
    {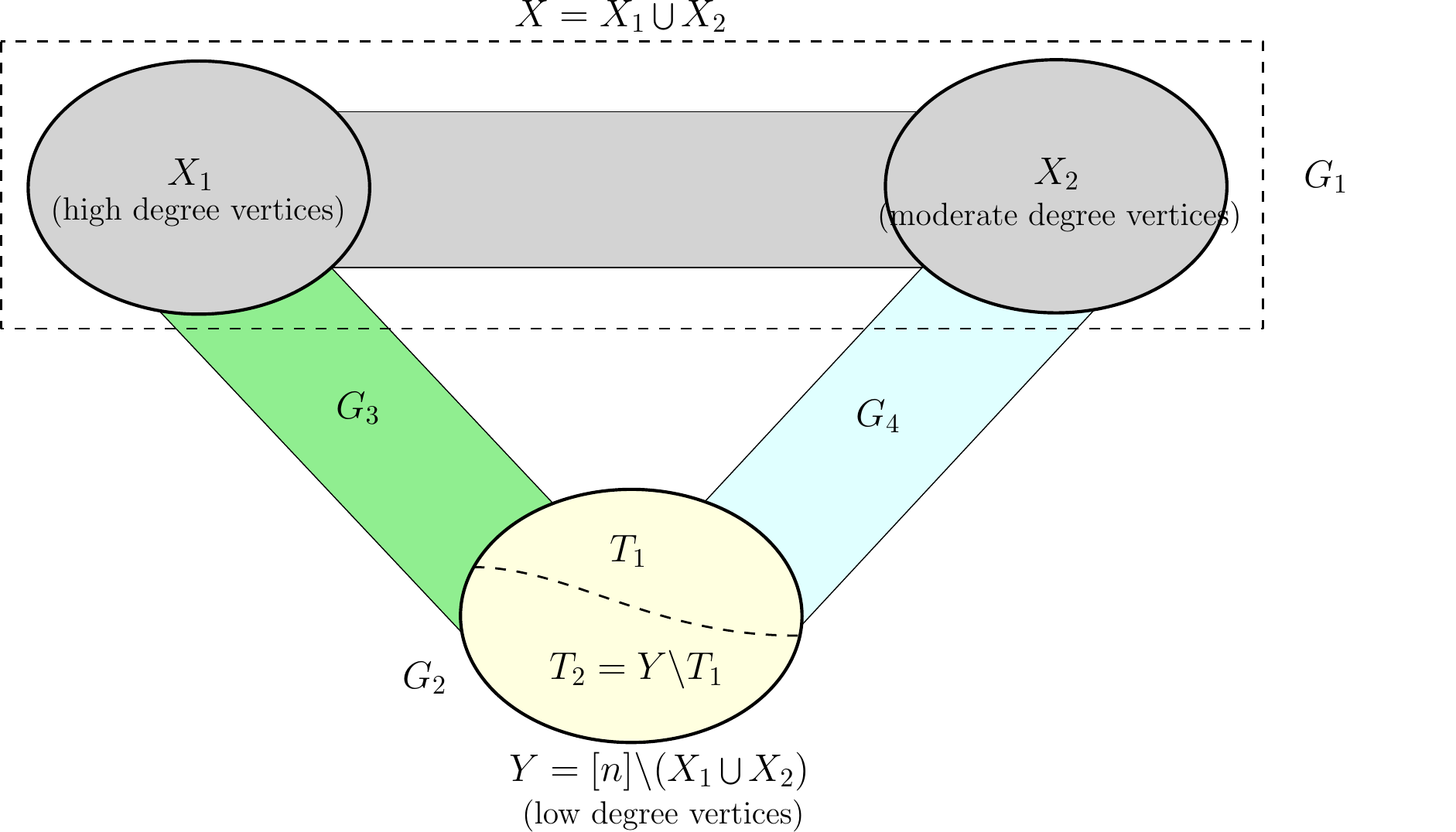}\\
\end{minipage}
\caption{\small{The partition of the vertices and the edges of the graph $F$ as in the proof of Lemma \ref{lem:decompose}. Here, $T_1$ denotes the set of vertices in $Y$ with degree 1 in $X_1$ after removing cycles from the graph $G_3$.}}
\label{fig:graph_decomposition}
\end{figure}

Now, by recalling the current regime of $p$ and Lemma \ref{equivalence},  
\begin{align}\label{eq:NN}
np \ll \sqrt{L_p}\overset{\eqref{relation345}}=(1+o(1)) \sqrt{\Delta_p} \quad \text{ and } \quad L_p\ge \frac{\log n}{(\log\log n)^2}.
\end{align}
Thus $d_{(1)}(G_2)\le np(1+1/\log\log n)+\Delta^{1/3}_p=o(\sqrt{L_p}).$ Applying Lemma \ref{lem:gfact}  this implies  
\begin{align}\label{eq:G2}
||G_2|| \leq d_{(1)}(G_2)=o(\sqrt{L_p}). 
\end{align}
We now iteratively remove cycles from $G_1$, in arbitrary order, until $G_1$ is cycle free.  Denote by $H_1$ the graph obtained by the union of the edges removed from $G_1$. (Note that, by construction, the edges of $H_1$ can be partitioned into disjoint union of cycles.) Then iteratively remove cycles from $G_4$ until $G_4$ is cycle free, and denote the removed graph by $H_4$. Finally, iteratively remove cycles from $G_3$ until $G_3$ is cycle free, and denote the removed graph by $H_3$. Observe that each vertex in every cycle removed from $G_1$ has degree greater than $\Delta_p^{1/3}$ in $F$. Moreover, because $G_3$ and $G_4$ are bipartite graphs, at least half of the vertices in every cycle removed from $G_3$ and $G_4$ has degree greater than $\Delta_p^{1/3}$ in $F$. Therefore, by  property (1) of the set $\cM$ (listed right after \eqref{eq:utgood}) there are no cycles of length greater than $\log^{5/6} n$ in $G_1$, $G_3$, or $G_4$. Then by  property (3) of the set $\cM$, 
$$\max\left\{d_{(1)}(H_1), d_{(1)}(H_3), d_{(1)}(H_4) \right\} = O(\log^{5/12} n) = o(\sqrt{L_p}),$$ 
where the final conclusion follows from the lower bound on $L_p$ in \eqref{eq:NN}. This implies, 
\begin{align} 
||G_4||  \leq  ||G_4 \backslash H_4|| + ||H_4|| \nonumber & \leq 2 \sqrt{d_{(1)}(G_4)} + d_{(1)}(H_4) \tag*{(using Lemma \ref{lem:gfact})}\nonumber \\
\label{eq:G4} &=O(L_p^{3/8})+o(\sqrt{L_p})=o(\sqrt{L_p}),
\end{align}
where the second inequality uses $ ||G_4 \backslash H_4||=\lambda_1(G_4 \backslash H_4)$, and  Lemma \ref{lem:gfact}(c), since $G_4 \backslash H_4$ is a forest. Similarly, since $G_1 \backslash H_1$ is a forest, using property (2) of $\cM,$
\begin{align}\label{eq:G1}
||G_1|| \leq ||G_1 \backslash H_1|| + ||H_1|| & \leq  2 \sqrt{d_{(1)}(G_1)} + d_{(1)}(H_1) \nonumber \\ 
& \leq O(\Delta_p^{7/16}) + O(\log^{5/12} n)= o(\sqrt{L_p}).
\end{align}
Now, partition the vertices of $Y$ into two parts $Y_1=T_1\cup T_2$, where $T_1$ is the set of the vertices in $Y$ with degree 1 in the graph $G_3\backslash H_3$, and $T_2= Y \backslash T_1$. Note that for every $v \in X_1$, the number of neighbors of $v$ in $T_2$ in the graph  $G_3\backslash H_3$ is at most $\Delta_p^{1/3}.$ (To see this, list the neighbors of $v$ in $T_2$ in the graph  $G_3\backslash H_3$ as $u_1,u_2,\ldots, u_K$, for some $K \geq 1$. Since $u_i \in T_2$ each of them have at least another neighbor  $v_i  \in X_1,$ for $1 \leq i \leq K$. Now,  because $G_3\backslash H_3$ is a forest all the vertices $v_1, v_2, \ldots, v_K$ must be distinct and at distance two from $v.$ Thus, $K \le \Delta_p^{1/3}$  by property (4) of the set $\cM$.)  Then we split the graph $G_3\backslash H_3$ in to two parts $F_1$ and $G_{3, 2}$, where $F_1$ is the set of all edges from $X_1$ to $T_1$ and $G_{3, 2}$ is the set of edges from $X_1$ to $T_2$. Note that the maximum degree of the graph $G_{3, 2}$ is $np(1+1/\log\log n)+\Delta^{1/3}_p=o(\sqrt{L_p}).$ This is because the maximum degree of any vertex in $Y$ is at most this and so is the maximum degree of any vertex in $X_1$ in to $T_2$ by the above discussion. Hence 
\begin{align}\label{eq:G3_II}
||G_{3, 2}|| \leq d_{(1)}(G_{3, 2})=o(\sqrt{L_p}).
\end{align}
Therefore, setting $F_2=G_1\bigcup G_2 \bigcup G_4\bigcup G_{3, 2}$, and combining \eqref{eq:G1}, \eqref{eq:G2},  \eqref{eq:G4}, and \eqref{eq:G3_II}, and Lemma \ref{lem:gfact}(b), it follows that $||F_2||=o(\sqrt{L_p})$.  This completes the proof since, by construction, $F_1$ is a disjoint union of stars. \hfill $\Box$  \\

\noindent {\it Case} 2: $(\log \log n)^2  \leq \log(1/n p)  \ll \log n$. First we prove the lower bound on the upper tail event $\mathrm{UT}_r(\bm \delta)$.  
As in the previous case, let $\overline {\bm \delta}=(\bm 1+\bm \delta)^2-\bm 1$, where $\bm 1=(1, 1, \ldots, 1)$ is the all 1 vector of length $r$. In this case, recall from Lemma \ref{lem:blem} that $\P(\overline{\cB_0}) = 1-e^{-\omega(\log n)}$. This implies, by Proposition \ref{ppn:degree_ut} (which gives $\P(\cG_{n, p} \in \mathrm{degUT}_r(\overline {\bm \delta}))=e^{-\Theta(\log n)}$), 
$$\P(\cG_{n, p} \in \overline{\cB_0}|\cG_{n, p} \in \mathrm{degUT}_r(\overline {\bm \delta})) \rightarrow 1.$$ Then, $\P(\cG_{n, p} \in \mathrm{degUT}_r(\overline{\bm \delta} ) ) = (1+o(1)) \P(\cG_{n, p} \in \mathrm{degUT}_r(\overline{\bm \delta} ) \bigcap \overline{\cB_0} ) $, and, by \eqref{eq:degree_ut},
\begin{align}\label{eq:deg_B}
\lim_{n \rightarrow \infty} \frac{-\log \P(\cG_{n, p} \in \mathrm{degUT}_r(\overline{\bm \delta} ) \bigcap \overline{\cB_0} )}{\log n}=  \sum_{a=1}^{r}(2\delta_a+\delta^2_a).  
\end{align}
Now, suppose $G \sim \mathrm{degUT}_r(\overline{\bm \delta}) \cap \overline{\cB_0}$, and iteratively remove cycles from $G$, in arbitrary order, until $G$ is cycle free.  Denote by $H$ the graph obtained by the union of the edges removed from $G$, and $G_1=G\backslash H$. Note that, by construction, the edges of $H$ can be partitioned into disjoint union of cycles. Moreover, by definition of the set $\cB_0$, no vertex of $G$ has more than $M=L_p^{5/12} $ edge disjoint cycles passing through it.  Therefore, $d_{(1)}(H) = O(L_p^{5/12}) = o(\sqrt{L_p})$. 
This implies, for $1 \leq a \leq r$, 
\begin{align}\label{eq:degreeG1}
d_{(a)}(G_1) \geq d_{(a)}(G)  - d_{(1)}(H) \geq (1+ \overline \delta_a) L_p - o(L_p)
\end{align}
that is, $G_1\in \mathrm{degUT}_r((1+o(1)) \overline{\bm \delta}).$  Further, by definition, $G_1$ is a disjoint union of the trees.  To proceed we need an estimate on the sizes of the tree components. {To this end, for $\theta>0$, denote by $\cD_{\theta}$ the event that $\cG_{n, p}$ has a tree of size 
$\ceil{\theta L_p}+1$. Next, for $s \geq 1$ and $\theta_1, \theta_2, \ldots, \theta_s > 0$, let $\cD_{\theta_1, \theta_2, \ldots, \theta_s}$ be the event that $\cG_{n, p}$ has $s$ disjoint trees of sizes  $\left\lceil \theta_1 L_p  \right\rceil +1 , \left\lceil \theta_2 L_p \right \rceil +1, \ldots,  \left\lceil \theta_s L_p  \right \rceil +1$, respectively} (note that they do not have to be connected components of $\cG_{n,p}$). We now bound the probability of $\cD_{\theta_1, \theta_2, \ldots, \theta_s}$.

\begin{lem}\label{lem:dlem} Suppose $(\log \log n)^2  \leq \log(1/n p)  \ll \log n$. Then for $s \geq 1$ and $\theta_1, \theta_2, \ldots, \theta_s > 0$, 
\begin{align*}
\P(\cD_{\theta_1, \theta_2, \ldots, \theta_s})  \leq \exp\left\{- \left( \sum_{a=1}^s\theta_a - s  \right) \log n  + o(\log n) \right\}.
\end{align*}
Moreover, $\P(\cD_{\log n}) \leq e^{-\Theta(\log^2 n)}$. 
\end{lem}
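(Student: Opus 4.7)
The plan is to prove the bound by a direct first-moment calculation on the expected number of labeled tree subgraphs. Writing $k_a:=\lceil\theta_a L_p\rceil$, Cayley's formula says that any fixed $(k_a+1)$-element subset of $[n]$ supports exactly $(k_a+1)^{k_a-1}$ labeled trees, each of which is contained in $\cG_{n,p}$ with probability $p^{k_a}$ (since each such tree has $k_a$ edges). A union bound over the choice of the $s$ (not necessarily disjoint) vertex sets therefore yields
\[
\P(\cD_{\theta_1,\ldots,\theta_s}) \le \prod_{a=1}^s \binom{n}{k_a+1}(k_a+1)^{k_a-1}p^{k_a}.
\]
Stirling's formula then gives $\binom{n}{k_a+1}(k_a+1)^{k_a-1}p^{k_a}\le \frac{e^{k_a+1}\,n\,(np)^{k_a}}{\sqrt{2\pi}\,(k_a+1)^{5/2}}(1+o(1))$, so each factor contributes $\log n + k_a - k_a\log(1/np) - \tfrac{5}{2}\log(k_a+1) + O(1)$ to the log-bound.

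The key identity, coming directly from the definition $L_p=\log n/(\log\log n+\log(1/np))$, is $L_p\log(1/np)=\log n - L_p\log\log n$, which gives $k_a\log(1/np)=\theta_a\log n - \theta_a L_p\log\log n+O(\log(1/np))$. In the current regime $(\log\log n)^2\le \log(1/np)\ll \log n$, we have $L_p\le \log n/\log(1/np)\ll \log n/(\log\log n)^2$, and hence each of $\log(1/np)$, $k_a$, $\log(k_a+1)$, and $L_p\log\log n$ is $o(\log n)$. Summing the per-tree bound over $a$ collapses everything to
\[
\log\P(\cD_{\theta_1,\ldots,\theta_s}) \le \Bigl(s-\sum_{a=1}^s\theta_a\Bigr)\log n + o(\log n),
\]
which is the first claim.

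For the second assertion, apply the single-tree bound with $k=\lceil \log n\cdot L_p\rceil$. Using the same identity, $k\log(1/np)=\log n\cdot L_p\log(1/np)+O(\log(1/np))=\log^2 n-L_p\log n\log\log n+o(\log^2 n)$; and since $L_p\log n\log\log n\ll \log^2 n/\log\log n$, this dominant term equals $\log^2 n(1+o(1))$. The remaining contributions $\log n$, $k=O(\log^2 n/(\log\log n)^2)$, and $\log(k+1)=O(\log\log n)$ are each $o(\log^2 n)$, so $\P(\cD_{\log n})\le e^{-\log^2 n(1+o(1))}=e^{-\Theta(\log^2 n)}$. The only non-routine step in the whole argument is verifying that the regime hypothesis $\log(1/np)\gg(\log\log n)^2$ is precisely the strength needed to render $L_p\log\log n=o(\log n)$ (respectively $o(\log^2 n)$ for the second claim); once that remainder is controlled, all the other estimates are a clean Stirling expansion combined with the trivial union bound above.
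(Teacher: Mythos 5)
Your argument is correct and arrives at exactly the same per-factor bound $\binom{n}{k_a+1}(k_a+1)^{k_a-1}p^{k_a}$ that the paper uses, but the route to the product form differs. The paper first bounds the single-tree event via $\P(\cD_\theta)\le \E(T_{n,\theta})$ and then passes to the product by invoking the Van den Berg–Kesten (BK) inequality, since $\cD_{\theta_1,\ldots,\theta_s}$ is the disjoint occurrence of the increasing events $\cD_{\theta_a}$. You instead run a direct first-moment count over $s$-tuples: the number of vertex-disjoint tuples of labeled trees of the prescribed sizes is at most the product of the single-tree counts $\binom{n}{k_a+1}(k_a+1)^{k_a-1}$, and for each disjoint tuple the appearance probability factors exactly as $p^{\sum_a k_a}$. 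This avoids appealing to the BK inequality and is arguably more elementary. One small exposition point: the phrase ``union bound over \ldots (not necessarily disjoint) vertex sets'' could be misread as union-bounding over non-disjoint tuples with probability $p^{\sum k_a}$ each, which would not be valid (overlapping trees have appearance probability larger than $p^{\sum k_a}$); the correct reading, which I believe is what you intend, is that the union bound runs over disjoint tuples and the relaxation to non-disjoint tuples only happens in bounding the \emph{count}. With that clarified, the rest of the calculation — the exact identity $L_p\log(1/np)=\log n-L_p\log\log n$ in place of the paper's asymptotic $L_p=(1+o(1))\log n/\log(1/np)$, the verification that $L_p\log\log n$, $k_a$, $\log(1/np)$, and $\log(k_a+1)$ are all $o(\log n)$ under $(\log\log n)^2\le\log(1/np)\ll\log n$, and the analogous estimates for $\theta=\log n$ — is all sound and matches the paper's conclusion.
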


\begin{proof} For $\theta >0$, denote by $T_{n, \theta}$ the number of labeled trees of size $t=\ceil{ \theta L_p} +1 $ that can be formed with $n$ vertices. Note that in the regime  $(\log \log n)^2  \leq \log(1/n p)  \ll \log n$, $L_p= (1+o(1)) \frac{\log n}{\log(1/np)}$. Then, using the fact that the number of labelled trees on $t$ vertices is $t^{t-2}$, gives
\begin{align}
\P(\cD_{\theta}) = \P(T_{n, \theta} > 0) \leq \E(T_{n, \theta})  & = {n \choose t} t^{t-2} p^{t-1} \nonumber \\ 
& \leq \frac{n^t}{t!} t^{t-2} p^{t-1} \nonumber \\ 
& \leq \frac{en}{t^2} (enp)^{t-1}  \leq e^{t - (t-1)\log(1/np) + \log n  } \tag*{(since $t > 1$)}\nonumber \\ 
& {\leq \exp\left\{ O_{\theta}\left(\frac{\log n}{\log(1/np)}\right)  -  ( \theta -1 ) \log n +O(\log (1/np))  \right \} } 
\nonumber \\ 
& \leq \exp\left\{- \left( \theta - 1  \right) \log n  + o(\log n) \right\}, \nonumber 
\end{align}
where the last step uses both ${\log n}/{\log(1/np)}$ and $\log(1/np)$ are $o(\log n)$, which follows by our hypothesis on $p$ in the current regime. Now, taking 
$\theta=\log n$, it follows that $\P(\cD_{\log n}) \leq e^{-\Theta(\log^2 n)}$.

{Next, for $\theta_1, \theta_2, \ldots, \theta_s > 0$, since the event $\cD_{\theta_1, \theta_2, \ldots, \theta_s}$ demands the disjoint occurrence of the events $\cD_{\theta_1}, \ldots , \cD_{\theta_s},$ we can apply the Van-den Berg-Kesten inequality (BK) inequality \cite{inequality_II} to obtain
\begin{align*}
\P(\cD_{\theta_1, \theta_2, \ldots, \theta_s})  \le  \prod_{a=1}^s \P(\cD_{\theta_a})  \leq \exp\left\{- \left( \sum_{a=1}^s\theta_a - s  \right) \log n  + o(\log n) \right\},
\end{align*} 
completing the proof of the lemma.}
\end{proof}

We will now analyze the event $G_1\in \mathrm{degUT}_r((1+o(1))\overline{\bm \delta}).$ 
Note that $\mathrm{degUT}_r((1+o(1))\overline{\bm \delta})$ entails the existence of vertices $v_1,v_2,\ldots, v_r \in V(G_1)$, with $d_{v_a}(G_1)\ge (1+o(1))(1+ \overline \delta_a )L_p$, for $1 \leq a \leq r$. Denoting the connected tree components of $G_1$ as $G_{1, 1}, G_{1, 2}, \ldots, G_{1, \nu}$, for some $\nu \geq 1$, let $\eta_a \in \{1, 2, \ldots, \nu\}$, be the index of the tree component containing the vertex $v_a$,  for $1 \leq a \leq r$. The following lemma shows, on the event  $G_1\in \mathrm{degUT}_r((1+o(1))\overline{\bm \delta})$, it is overwhelming likely that $\eta_a \neq \eta_b$, for all $1\le a\neq b \le r$.  

\begin{lem}  $\P\left(\{\eta_a \ne \eta_b \text{ for all } 1\le a \neq b \le r\}| \cG_{n, p} \in \mathrm{degUT}_r(\overline {\bm \delta}) \bigcap \overline{\cB_0} \right) \rightarrow 1$. 
\end{lem}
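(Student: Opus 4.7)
The plan is to show that if the bad event $\{\eta_a = \eta_b \text{ for some } a \neq b\}$ occurs, then $\cG_{n,p}$ must contain an atypically large collection of edge-disjoint subtrees, and the probability of this is smaller than $\P(\cG_{n,p} \in \mathrm{degUT}_r(\overline{\bm\delta}) \cap \overline{\cB_0})$ by a factor tending to zero.

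First, I partition $\{1, 2, \ldots, r\}$ into equivalence classes $I_1, \ldots, I_s$ by declaring $a \sim b$ iff $\eta_a = \eta_b$, and let $T_j$ denote the tree component of $G_1$ containing the vertices indexed by $I_j$. On the bad event, at least two indices get identified, so $s \leq r - 1$. The key geometric observation is that in any tree, two distinct vertices share at most one common neighbor (otherwise a $4$-cycle appears). Combining this with inclusion-exclusion applied to $\bigcup_{c \in I_j} N(v_c) \cup \{v_c : c \in I_j\}$ inside $T_j$, and using $|I_j| \leq r = O(1)$, gives
$$|V(T_j)| \;\geq\; \sum_{c \in I_j} d_{v_c}(G_1) - O(r^2) \;\geq\; \Bigl( |I_j| + \sum_{c \in I_j} \overline\delta_c - o(1) \Bigr) L_p,$$
where the last inequality uses $d_{v_c}(G_1) \geq (1+\overline\delta_c - o(1)) L_p$ from \eqref{eq:degreeG1}.

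Since $T_1, \ldots, T_s$ are vertex-disjoint subgraphs of $G_1 \subseteq \cG_{n,p}$ (and therefore edge-disjoint), I can extract a subtree of size exactly $\lceil \theta_j L_p \rceil + 1$ from each $T_j$, where $\theta_j := |I_j| + \sum_{c \in I_j}\overline\delta_c - \epsilon$ for an arbitrarily small fixed $\epsilon > 0$. This realizes the event $\cD_{\theta_1, \ldots, \theta_s}$, so Lemma \ref{lem:dlem} applies. Since $\sum_{j=1}^{s} \theta_j = r + \sum_{a=1}^{r} \overline\delta_a - s\epsilon$ and $s \leq r - 1$,
$$\P(\cD_{\theta_1, \ldots, \theta_s}) \;\leq\; \exp\!\left\{ -\Bigl( r - s + \sum_a \overline\delta_a - s\epsilon \Bigr) \log n + o(\log n) \right\} \;\leq\; \exp\!\left\{ -\Bigl( 1 + \sum_a \overline\delta_a - r\epsilon \Bigr) \log n + o(\log n) \right\}.$$

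Finally I union bound over the $B_r = O(1)$ partitions of $\{1, \ldots, r\}$ with $s \leq r - 1$ blocks and divide by $\P(\cG_{n,p} \in \mathrm{degUT}_r(\overline{\bm\delta}) \cap \overline{\cB_0}) = \exp\{-(\sum_a \overline\delta_a + o(1)) \log n\}$ from \eqref{eq:deg_B}. Choosing $\epsilon$ sufficiently small (say $\epsilon < 1/(2r)$), the conditional probability of the bad event is at most $\exp\{-(1/2 - o(1)) \log n\} = n^{-1/2 + o(1)} \to 0$. The main obstacle is the combinatorial tree-size bound of Step 1: one has to squeeze out the full $|I_j| + \sum_{c \in I_j}\overline\delta_c$ contribution (not just $\max$ over $c$), because that is precisely what produces the gain of $r - s \geq 1$ in the exponent needed to beat the denominator; this is where the tree property (any two vertices share at most one common neighbor) is essential.
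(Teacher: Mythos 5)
Your proof is correct and follows essentially the same route as the paper's: partition $[r]$ by tree-component identity, lower-bound component sizes via the at-most-one-common-neighbor property of forests, invoke Lemma~\ref{lem:dlem} on the resulting disjoint trees, and compare against the denominator from \eqref{eq:deg_B} after a union bound over partitions. The only cosmetic difference is that you carry an explicit $\epsilon$ where the paper absorbs the slack into $o(1)$ terms, which makes your final exponent $1/2$ rather than $1$, but this still tends to zero as required.
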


\begin{proof} 
Throughout this proof we order the connected components $G_{1,1}, G_{1,2}, \ldots, G_{1, \nu}$ of $G_1$,  such that $|V(G_{1,1})| \geq |V(G_{1,2})| \geq \cdots \geq |V(G_{1, \nu})|$. To proceed with the proof we develop some notation which allows us to encode which high degree vertices come from the same tree component.

To this end, fix a partition $\Pi$  of the set $[r]=\{1, 2, \ldots, r\}$ with parts $\Pi=\{\pi_1, \pi_2, \ldots, \pi_B\}$, for some $B \geq 1$. (Note that $\pi_1, \pi_2, \ldots, \pi_B$ are mutually disjoint subsets of $[r]$ and $\sum_{b=1}^B |\pi_b|=r$.) Given a partition $\Pi$ as above, denote by $\mathrm{deg}_\Pi$ the event that, for all $1 \leq b \leq B$, 
\begin{align}\label{eq:degree_pi}
\eta_{j_1}=\eta_{j_2}, \quad \text{ for all } \quad j_1, j_2 \in \pi_b, 
\end{align}
that is, all the vertices in $\pi_b$ belong to the same tree component. In other words, if we denote the common value of $\eta$ as $\phi(b)$,  then \eqref{eq:degree_pi} can be rewritten as $\{v_{a}: a \in \pi_b\} \subseteq V(G_{1, \phi(b)}),$  where $\phi: \{1, 2, \ldots, B\} \rightarrow  \{1, 2, \ldots, \nu\} $ is some injective function.
{Now, note that, since $G_1$ is a forest, no two vertices in $G_1$ can have more than 1 common neighbor (otherwise the graph $G_1$ will have a cycle). Then, on the event $\mathrm{deg}_\Pi\bigcap \left( \mathrm{degUT}(\overline{\bm \delta}) \bigcap \overline{\cB_0}  \right)$, for $1 \leq b \leq B$,  we have the following bound on the size of $V(G_{1, \phi(b)}),$
\begin{align*}
|V(G_{1, \phi(b)})| \geq \sum_{a \in \pi_b} d_{v_a}(G_1) - O(r^2) \geq \sum_{a \in \pi_b} (1+\overline \delta_a) (1+o(1)) L_p = \sum_{a \in \pi_b} (1+\delta_a)^2  (1+o(1)) L_p. 
\end{align*} 
Above the first inequality uses the mentioned fact that no two vertices have more than one common neighbor, and the second inequality uses the above mentioned lower bound on $d_{v_a}(G_1)$.  This shows $|V(G_{1, \phi(b)})| \geq \sum_{a \in \pi_b} (1+\delta_a)^2  (1+o(1)) L_p + {1}$ (The additional plus $1$ which can be achieved by adjusting the $o(1)$ term is written to help invoke Lemma \ref{lem:dlem} verbatim).} For notational brevity, denote $w_\Pi(b)= (1+o(1)) \sum_{a \in \pi_b} (1+\delta_a)^2$. Therefore, $\mathrm{deg}_\Pi\bigcap \left( \mathrm{degUT}(\overline{\bm \delta}) \bigcap \overline{\cB_0}  \right)$ implies the event $\cD_{w_\Pi(1), \ldots, w_\Pi(B)}$. This implies, by Lemma \ref{lem:dlem}, 
\begin{align}\label{eq:deg_partiton}
\P\left(\cG_{n, p} \in  \mathrm{deg}_\Pi\bigcap \left( \mathrm{degUT}(\overline{\bm \delta}) \bigcap \overline{\cB_0}  \right)  \right) & \leq \P(\cD_{w_\Pi(1), \ldots, w_\Pi(B)})  \nonumber \\ 
&  \leq \exp\left\{- \left( \sum_{b=1}^B w_\Pi(b) - B  \right) \log n  + o(\log n) \right\} \nonumber \\ 
&  \leq \exp\left\{- \left(   \sum_{a=1}^r (1+\delta_a)^2 - B  \right) \log n  + o(\log n) \right\}, 
\end{align} 
where the last step uses $\sum_{b=1}^B w_\Pi(b) = (1+o(1)) \sum_{a=1}^r (1+\delta_a)^2$. Now, let $\cP_{< r}$ be the set of all partitions of $\{1, 2, \ldots, r\}$ with strictly less than $r$ parts. Then, using a union bound and $|\cP_{< r}| \lesssim_r 1$.  
\begin{align*}
\P& \left(\{\eta_a = \eta_b \text{ for some } 1\le a \neq b \le r\}| \cG_{n, p} \in \mathrm{degUT}_r(\overline {\bm \delta}) \bigcap \overline{\cB_0} \right) \nonumber \\   
  &  ~~~~~~~~~~~~~~~~ \leq \frac{\sum_{\Pi  \in \cP_{< r}}  \P\left(\cG_{n, p} \in  \mathrm{deg}_\Pi\bigcap \left( \mathrm{degUT}(\overline{\bm \delta}) \bigcap \overline{\cB_0}  \right)  \right)}{\P(\cG_{n, p} \in \mathrm{degUT}_r(\overline {\bm \delta}) \bigcap \overline{\cB_0})} \nonumber \\ 
&   \lesssim_r \frac{ e^{- \left(   \sum_{a=1}^r (1+\delta_a)^2 - (r-1)  \right) \log n  + o(\log n) } }{e^{- \left(   \sum_{a=1}^r (1+\delta_a)^2 - r  \right) \log n  + o(\log n) } } \tag*{(using \eqref{eq:deg_B} and $|B| \leq r-1$ in \eqref{eq:deg_partiton})} \nonumber \\ 
& \rightarrow 0. 
\end{align*}
This completes the proof of the lemma. 
\end{proof}

The lemma above implies 
$$\P\left(\cG_{n, p} \in \mathrm{degUT}_r(\overline {\bm \delta}) \bigcap \overline{\cB_0} \right) = (1+o(1)) \P\left(\{\eta_a \ne \eta_b \text{ for all } 1\le a \neq b \le r\} \bigcap  \mathrm{degUT}_r(\overline {\bm \delta}) \bigcap \overline{\cB_0} \right).$$ 
Thus, \eqref{eq:deg_B} gives,  
$$\lim_{n \rightarrow \infty} \frac{-\log \P\left(\cG_{n, p} \in \mathrm{degUT}_r((1+o(1))\overline{\bm \delta})\bigcap \overline{\cB_0} \bigcap \{ \eta_a \ne \eta_b \text{ for all } 1\le a \neq b \le r\}\right)}{\log n}=\sum_{a=1}^{r}(2 \delta_a + \delta_a^2).$$
Then, by Lemma \ref{lem:deg_eta} below, 
\begin{align}\label{eq:ut_lb_B}
\P(\cG_{n, p} \in \mathrm{UT}_r(\bm \delta)) \geq \exp\left\{- (1+o(1)) \sum_{a=1}^r (\delta_a^2 + 2 \delta_a)  \log n  \right\}, 
\end{align}
which proves the lower bound in the regime $(\log \log n)^2  \leq \log(1/n p)  \ll \log n$. 

\begin{lem}\label{lem:deg_eta}The event $\mathrm{degUT}_r((1+o(1))\overline{\bm \delta})\bigcap \overline{\cB_0} \bigcap \{ \eta_a \ne \eta_b \text{ for all } 1\le  a \neq b \le r\}$  implies the upper tail event $\mathrm{UT}_r( (1+o(1)) \overline{\bm \delta})$. 
\end{lem}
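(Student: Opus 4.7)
My plan is to start from the decomposition $G=G_1\cup H$ already set up in the discussion preceding the lemma, where $G_1$ is the forest obtained by iteratively deleting cycles from $G$ and $H$ collects the deleted edges. On the event $\overline{\cB_0}$ every vertex of $G$ lies on at most $M_0 = L_p^{5/12}$ edge-disjoint cycles, so $d_{(1)}(H)=O(L_p^{5/12})=o(\sqrt{L_p})$, and hence $\|H\|\le d_{(1)}(H)=o(\sqrt{L_p})$ by Lemma~\ref{lem:gfact}(a). Since each high-degree witness $v_a$ loses at most $d_{(1)}(H)=o(L_p)$ of its degree to $H$, the estimate \eqref{eq:degreeG1} gives $d_{v_a}(G_1)\ge (1+o(1))(1+\delta_a)^2 L_p$ for every $a=1,\dots,r$.

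Next I would exploit the assumption $\eta_a\ne\eta_b$ for all $1\le a\ne b\le r$: each $v_a$ lies in a distinct tree component $G_{1,\phi(a)}$ of $G_1$, and since a connected component of $G_1$ contains all of a vertex's neighbors, the entire degree $d_{v_a}(G_1)$ is realized inside $G_{1,\phi(a)}$. In particular, $G_{1,\phi(a)}$ contains the star $K_{1,d_{v_a}(G_1)}$ centered at $v_a$ as a subgraph, so Lemma~\ref{lem:gfact}(a) applied to this component yields
\[
\lambda_1(G_{1,\phi(a)})\;\ge\;\sqrt{d_{v_a}(G_1)}\;\ge\;(1+o(1))(1+\delta_a)\sqrt{L_p}.
\]
Because the components $G_{1,\phi(1)},\dots,G_{1,\phi(r)}$ are pairwise vertex-disjoint, the multiset of eigenvalues of $G_1$ contains each of the $\lambda_1(G_{1,\phi(a)})$ as a summand in its spectrum; combined with the monotonicity $\delta_1\ge\cdots\ge\delta_r$, a straightforward sorting argument (for each $a$, at least $a$ of the $r$ component-maxima exceed $(1+o(1))(1+\delta_a)\sqrt{L_p}$) gives $\lambda_a(G_1)\ge (1+o(1))(1+\delta_a)\sqrt{L_p}$ for every $1\le a\le r$.

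I would finish by transferring this bound from $G_1$ to $G$ via Weyl's inequality: since $\|H\|=o(\sqrt{L_p})$,
\[
\lambda_a(G)\;\ge\;\lambda_a(G_1)-\|H\|\;\ge\;(1+o(1))(1+\delta_a)\sqrt{L_p},
\]
for every $1\le a\le r$, which after absorbing the multiplicative $(1+o(1))$ into the parameter vector is precisely the desired upper tail event on the scale $\sqrt{L_p}$ (this is how the conclusion is used in \eqref{eq:ut_lb_B}). The one point requiring care is the bookkeeping of $o(1)$ losses at three stages — the degree absorbed into $H$, the square-root conversion from degree to eigenvalue, and the Weyl perturbation — all of which are benign in the regime $(\log\log n)^2\le \log(1/np)\ll\log n$ because $L_p^{5/12}=o(\sqrt{L_p})$; conceptually the crux of the proof is the cycle-count control supplied by $\overline{\cB_0}$ coupled with the disjointness of the tree components provided by $\{\eta_a\ne\eta_b\}$, both of which have already been harnessed in the arguments leading up to the lemma.
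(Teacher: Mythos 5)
Your proposal mirrors the paper's proof: both rely on the decomposition $G = G_1 \cup H$ from the iterative cycle removal, the bound $d_{(1)}(H) = O(L_p^{5/12}) = o(\sqrt{L_p})$ from $\overline{\cB_0}$, the observation that the $r$ distinct tree components each contribute a top eigenvalue at least $\sqrt{d_{v_a}(G_1)} \geq (1+o(1))(1+\delta_a)\sqrt{L_p}$ to the spectrum of $G_1$, and a final transfer to $G$ via Weyl's inequality. Your explicit "sorting argument" simply spells out what the paper leaves implicit when asserting that the multiset of component maxima being a subset of $G_1$'s spectrum yields $\lambda_a(G_1) \geq (1+o(1))(1+\delta_a)\sqrt{L_p}$; the proof is correct.
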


\begin{proof}
Recall that $G_1$ is a disjoint union of trees. Therefore, on the event $$\mathrm{degUT}_r((1+o(1))\overline{\bm \delta})\bigcap \overline{\cB_0} \bigcap \{ \eta_a \ne \eta_b \text{ for all } 1\le  a \neq b \le r\},$$ $G_1$ has at least $r$ connected components $G_{1, 1}, G_{1, 2}, \ldots, G_{1, r}$, such that, for $1 \leq a \le r$, there exists a vertex $v_a \in V(G_{1,  a})$ with $d_{v_a}(G_{1,  a})\ge (1+o(1))(1+ \overline \delta_a)L_p$. This implies, by Lemma \ref{lem:gfact}(a), 
\begin{align}\label{eq:lambda1r}
\lambda_{1}(G_{1,  a})\ge (1+o(1))\sqrt{(1+ \overline \delta_a)L_p}= (1+o(1)) (1+ \delta_a) \sqrt{L_p}.
\end{align}
Since, $G_{1, 1}, G_{1, 2}, \ldots, G_{1, r}$ are disjoint, the multi-set $\{\lambda_1(G_{1, 1}), \lambda_1(G_{1, 2}), \ldots, \lambda_1(G_{1, r})\}$ is a subset of the spectrum of $G_1$. 
Now, since $G_1= G\backslash H$ (recall that $H$ is the graph obtained by the union of the edges removed from $G$ during the iterative cycle removal process) and $d_{(1)}(H) = O( L_p^{5/12} )$, it follows that 
\begin{align}\label{eq:lambdaG1H}
\lambda_1(H) \leq d_{(1)}(H) = o(\sqrt{L_p}) .
\end{align}
This implies, by Weyl's inequality, $\lambda_a(G)= \lambda_a(G_1) + o(\sqrt{L_p})$, for $1 \leq a \leq r$, which together with \eqref{eq:lambda1r} shows that $G \in \mathrm{UT}_r( (1+o(1)) \overline{\bm \delta})$. 
\end{proof}

 We now move on to proving a matching upper bound on $\mathrm{UT}_r(\bm \delta)$. Again, recall the event $\cB_0$ from Lemma \ref{lem:blem} and denote the event $\cN=\overline \cD_{\log n} \bigcap \overline \cB_0$, where $\cD_{\log n}$ is as defined in  Lemma \ref{lem:dlem}. Combining Lemma \ref{lem:blem} and \ref{lem:dlem} gives, 
\begin{align}\label{eq:N}
\P(\cN)=1-e^{\omega(\log n)}. 
\end{align}
Now, suppose $G \in  \mathrm{UT}(\bm \delta) \bigcap \cN$. As in the proof of the lower bound above, iteratively remove cycles from $G$, in arbitrary order, until $G$ cycle free, and denote by $H$ the graph obtained by the superposition of cycles removed from $G$, and $G_1=G\backslash H$. Note that $d_{(1)}(H) = O(L_p^{5/12} )$, by \eqref{eq:lambdaG1H} and Weyl's inequality, $\lambda_a(G)= \lambda_a(G_1) + o(\sqrt{L_p})$, for $1 \leq a \leq r$ and hence 
\begin{equation}\label{lowerboundspec}
\lambda_a(G_1)\ge (1+\delta_a) (1+o(1)) \sqrt{L_p}.
\end{equation}
Moreover, by definition, $G_1$ is a disjoint union of trees. Denote by $G_{1,1}, G_{1,2}, \ldots, G_{1, \nu}$ the connected components of $G_1$,  such that $|V(G_{1,1})| \geq |V(G_{1,2})| \geq \cdots \geq |V(G_{1, \nu})|$, for some $\nu \geq 1$. Since $G \in \cN$, and in the current regime $L_p= (1+o(1)) \frac{\log n}{\log(1/np)}\ll \log n,$ it follows that $|V(G_{1,b})| \ll \log^2 n$, for all $1 \leq b \leq \nu$, which implies, $\nu \gtrsim n/\log^2 n$. 

Now, similar to the proof of the lower bound where we partitioned the high degree vertices depending on which tree component they came from, we now partition the largest $r$ eigenvalues accordingly. Thus, as before, fix a partition $\Pi$  of the set $[r]=\{1, 2, \ldots, r\}$ with parts $\Pi=\{\pi_1, \pi_2, \ldots, \pi_B\}$, for some $B \geq 1$. 
Given a partition $\Pi$ as above, denote by $\cT_\Pi$ the event that, for all $1 \leq b \leq B$, 
\begin{align}\label{eq:pi}
\lambda_{i_a}(G_1)=\lambda_a(G_{1, \phi(b)}), \text{ for all } 1 \leq a \leq |\pi_b|, 
\end{align} 
where $\phi: \{1, 2, \ldots, B\} \rightarrow  \{1, 2, \ldots, \nu\} $ is some injective function and $\pi_b=\{i_1, i_2, \ldots, i_{|\pi_b|}\}$ with the indices arranged in increasing order, that is, $i_1 < i_2 < \ldots < i_{|\pi_b|}$. In other words, $\cT_\Pi$ prescribes that for each $1 \leq b \leq B$, all the eigenvalues $\lambda_{s}$, with $s \in \pi_b$, belong to the spectrum of a single distinct tree component denoted as $G_{1, \phi(b)}$. Then, for any $1 \leq b \leq B$, and $G \in  \mathrm{UT}(\bm \delta) \bigcap \overline \cB_0$, 
\begin{align*}
2\sum_{a \in \pi_b} (1+\delta_a)^2 (1+o(1)) L_p  \le  2\sum_{a \in \pi_b} \lambda_a^2(G_1) 
& = 2\sum_{a=1}^{|\pi_b|}\lambda_a^2(G_{1, \phi(b)})\\
& \leq  \sum_{a=1}^{|V(G_{1, \phi(b)})|} \lambda_a^2(G_{1, \phi(b)})   \\
& = 2 |E(G_{1, \phi(b)})| = 2 (|V(G_{1, \phi(b)})|-1). 
\end{align*} 
To see this note that the first inequality follows from the lower bound in \eqref{lowerboundspec}. The second equality is by \eqref{eq:pi} and the third inequality uses the fact that the spectrum of $G_{1, \phi(b)}$ is symmetric, since it is a tree and, hence, bipartite \cite[Proposition 4.5.4]{spectral_graph_theory}.  
The fourth equality is a standard fact for any graph which states that the second moment of the spectrum, which is the Frobenius norm square of the adjacency matrix, is equal to twice the number of edges. The final equality is because the number of edges in any tree is one less than the number of vertices. 
Thus, it follows that $$|V(G_{1, \phi(b)})| \geq \left(\sum_{a \in \pi_b} (1+\delta_a)^2  (1+o(1)) L_p\right) + 1.$$ 
(As before, the additional plus $1$ which can be achieved by adjusting the $o(1)$ term is written to help invoke Lemma \ref{lem:dlem} verbatim).  For notational brevity, denote $w_\Pi(b)=(1+o(1)) \sum_{a \in \pi_b} (1+\delta_a)^2$. Therefore, the event $\cT_\Pi\bigcap \left( \mathrm{UT}(\bm \delta) \bigcap \cN  \right)$ implies the event $\cD_{w_\Pi(1), \ldots, w_\Pi(B)}$. This implies, by using Lemma \ref{lem:dlem}, $\sum_{b=1}^B w_\Pi(b) = (1+o(1)) \sum_{a=1}^r (1+\delta_a)^2,$ and $|B|\leq r$, 
\begin{align*}
\P\left(\cG_{n, p} \in \cT_{\Pi} \bigcap \left( \mathrm{UT}(\bm \delta) \bigcap \cN  \right) \right) & \leq \P(\cD_{w_\Pi(1), \ldots, w_\Pi(B)})  \nonumber \\  
& \leq \exp\left\{- \left( \sum_{a=1}^r (1+\delta_a)^2 - r  \right) \log n  + o(\log n) \right\}.   
\end{align*} 
Now, taking a union bound over the set of all partitions
$\cP_r$ of $[r]=\{1, 2, \ldots, r\}$ and using $|\cP_r| \lesssim_r 1$ gives, 
\begin{align}\label{eq:utN}
\P( \cG_{n, p} \in \mathrm{UT}(\bm \delta) \bigcap \cN ) & = \P\left(\bigcup_{\Pi \in \cP_r}\cT_{\Pi} \bigcap \left(\mathrm{UT}(\bm \delta) \bigcap \cN \right)\right)  \nonumber \\ 
& \lesssim_r \exp\left\{- \sum_{a=1}^r \left( \delta_a^2 + 2 \delta_a   \right) \log n  + o(\log n) \right\}.  
\end{align} 
{Finally, using $\P(\cG_{n, p} \in \mathrm{UT}_r(\bm \delta)) \geq e^{-\Theta(\log n)}$ (see \eqref{eq:ut_lb_B}) and \eqref{eq:N} implies, $\P(\cG_{n, p} \in  \cN| \cG_{n, p} \in \mathrm{UT}_r(\bm \delta)) \rightarrow 1$.} Therefore, by \eqref{eq:utN}, 
\begin{align*}
\P(\cG_{n, p} \in \mathrm{UT}_r(\bm \delta))  = (1+o(1)) \P\left(\cG_{n, p} \in \mathrm{UT}_r(\bm \delta) \bigcap  \cN   \right) \leq \exp\left\{- (1+o(1)) \sum_{a=1}^r (\delta_a^2 + 2 \delta_a)  \log n  \right\}, 
\end{align*}
which gives the upper bound. \\

\begin{remark}\label{remark:ut_marginal} {\em The proof of Theorem \ref{thm:1ut} shows that conditional on the upper tail event $\mathrm{UT}(\bm \delta)$, the random graph looks   
(after removing a graph with `small' spectral norm and maximum degree) either like a disjoint union of stars (in the case  $e^{-(\log \log n)^2} \leq n p \ll \sqrt{\log n/\log \log n}$), or a disjoint union of trees of specific sizes (in the case $(\log \log n)^2  \leq \log(1/n p)  \ll \log n$)   This structural information can be used to obtain the marginal large deviations for the upper tail of the edge eigenvalues as well. More generally, from the proof of Theorem \ref{thm:1ut}  one easily can show that, for $1 \leq t \leq r$ and $0=i_0 \leq i_1 < i_2 < \cdots < i_t \leq r$,  
$$\lim_{n \rightarrow \infty}\frac{-\log \P(\lambda_{i_1}(\cG_{n, p}) \geq (1+\delta_{i_1})  \sqrt{L_p}, \ldots, \lambda_{i_t}(\cG_{n, p}) \geq (1+\delta_{i_t} )  \sqrt{L_p} ) }{\log n} = \sum_{s=1}^t (i_{s}-i_{s-1})(2 \delta_{i_s} + \delta_{i_s}^2),$$ 
where $\delta_{i_1} \geq \delta_{i_2} \geq \cdots \geq \delta_{i_t} >0$. This follows by observing that the marginal event $\{\lambda_{i_1}(\cG_{n, p}) \geq (1+\delta_{i_1})  \sqrt{L_p}, \ldots, \lambda_{i_t}(\cG_{n, p}) \geq (1+\delta_{i_t} )  \sqrt{L_p} \}$ is equivalent to the full event  $\mathrm{UT}_t(\bm \delta)$, where 
$$\bm\delta=(\delta_{i_1}, \ldots, \delta_{i_1}, \delta_{i_2}, \ldots, \delta_{i_2}, \ldots, \delta_{i_t}, \ldots, \delta_{i_t}),$$
where the $\delta_{i_s}$ is repeated $i_s-i_{s-1}$ times, for $1 \leq s \leq t$. }
\end{remark}

\subsection{Alternate Approach via Strong Concentration Results}
\label{sec:pf_concentration}
 
Another potential approach to proving the large deviations of the upper tail event $\mathrm{UT}_r(\bm \delta)$ is via the strong concentration results of the spectral norm  of sparse graphs established in \cite{vershynin}. It is well known that the spectral norm of the adjacency matrix $||A(\cG_{n, p})||$ concentrates arounds the spectral norm of its expected value $||\E(A(\cG_{n, p}))||= (1+o(1)) np$ in the dense regime \cite{spectral_techniques}.  On the other hand, as we have already seen, in the sparse regime \eqref{eq:rangep}, one cannot expect such a result, because the spectral norm of $\cG_{n, p}$ is dictated by the maximum degree which is typically much larger than $np$. However, the results of \cite{vershynin} indicate that this is the only barrier and a concentration about $||\E(A(\cG_{n, p}))||$ indeed holds after suitable regularization by pruning the high degree vertices.  More precisely, among other things, they prove the following result (restated in our notation). 

\begin{thm}\label{thmversh}\cite[Theorem 2.1]{vershynin}  Let $\cG_{n, p}^-$ be the subgraph of $\cG_{n, p}$ obtained by removing any subset consisting of at most $10/p$ vertices from $\cG_{n, p}$. Then, for any $b \ge 1$, the following holds with probability at least $1- n^{-b}$, 
$$\|A(\cG_{n, p}^{-}) - \E(A(\cG_{n, p})) \|\le K b^{3/2}\left(\sqrt{np}+\sqrt{d_{(1)}(\cG_{n, p}^{-})} \right),$$
for some absolute constant $K >0$. 
\end{thm}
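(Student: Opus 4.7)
My plan is to follow the Feige--Ofek style argument, refined in the manner of Le--Levina--Vershynin, which proves operator norm concentration for regularized sparse random matrices by combining an $\varepsilon$-net discretization with a dichotomy between ``light'' and ``heavy'' pairs. The starting point is the standard dual representation
\[
\|A(\cG_{n,p}^{-}) - \E A(\cG_{n,p})\| = \sup_{x,y \in S^{n-1}} \langle (A(\cG_{n,p}^{-}) - \E A(\cG_{n,p})) x, y\rangle,
\]
which we will discretize using a grid net $\mathcal{N} \subset S^{n-1}$ consisting of vectors whose coordinates lie in $\frac{1}{\sqrt{n}}\Z$. A standard approximation argument replaces the supremum over $S^{n-1}$ by one over $\mathcal{N}$ at the cost of a constant factor, while the cardinality $|\mathcal{N}| \le e^{Cn}$ sets the budget for the union bound.

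For fixed $x, y \in \mathcal{N}$, the goal is to control $\sum_{i,j} (A_{ij} - p)x_i y_j$. Split the index set of pairs $(i,j)$ into a light part $\mathcal{L}(x,y) = \{(i,j) : |x_i y_j| \le \sqrt{np}/n\}$ and a heavy part $\mathcal{H}(x,y)$, and treat the two sums separately. On $\mathcal{L}(x,y)$ each summand is bounded and the variance is controlled by $p\sum x_i^2 y_j^2 \le p$, so Bernstein's inequality applied with deviation $t \asymp b^{3/2}\sqrt{np}$ yields failure probability $\le e^{-c b^3 n}$ for each $(x,y)$. Taking $b$ sufficiently large absorbs the entropy $|\mathcal{N}|^2 \le e^{2Cn}$, giving a uniform light bound of order $b^{3/2}\sqrt{np}$ with probability at least $1 - n^{-b}$.

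The main obstacle lies in controlling the heavy couples, because these capture the genuinely non-concentrated portion of the spectrum that forces the $\sqrt{d_{(1)}(\cG_{n,p}^{-})}$ term. The centering by $\E A = p J - pI$ contributes at most $p \sum_{\mathcal{H}} |x_i y_j| \le \sqrt{np}$ to this sum by the definition of heaviness and Cauchy--Schwarz. For the random part $\sum_{\mathcal{H}} A_{ij} x_i y_j$, one partitions the coordinates of $x$ and $y$ dyadically into level sets $I_s = \{i : |x_i| \in [2^{s}/\sqrt{n}, 2^{s+1}/\sqrt{n})\}$ and $J_t$, and uses the regularization together with a high-probability bound on edge discrepancies between vertex subsets $S,T$ (proved via a union bound over all $(S,T)$ using Bernstein and a separate case analysis when $p|S||T|$ is small). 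The critical input is that the bounded maximum degree $d_{(1)}(\cG_{n,p}^{-})$ caps the row sums of $A(\cG_{n,p}^{-})$ restricted to any subset, so that the contribution of each dyadic block is at most $\sqrt{d_{(1)}(\cG_{n,p}^{-})}$ times the $\ell^2$ masses of the blocks, and summing the geometric series over $s,t$ gives the advertised bound $K b^{3/2}\sqrt{d_{(1)}(\cG_{n,p}^{-})}$.

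The hard part will be executing the heavy-couple estimate cleanly: keeping the exponent $b^{3/2}$ rather than a worse power requires carefully balancing the deviation parameters in Bernstein against the entropy of both $\mathcal{N}$ and of the discretization over subsets $(S,T)$, and ensuring that the regularization step (allowing any choice of up to $10/p$ removed vertices) does not spoil the union bound. This last point is subtle because $\cG_{n,p}^{-}$ depends on the graph, so the high-probability bound on edge discrepancies must be proved simultaneously for \emph{all} admissible vertex removals, which is what dictates the specific threshold $10/p$ and the appearance of $\sqrt{np}$ as an irreducible additive term.
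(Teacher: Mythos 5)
The paper itself does not prove Theorem~\ref{thmversh}; it is imported verbatim from the cited reference \cite{vershynin} (Le, Levina and Vershynin), so there is no in-paper argument to compare your sketch against. That said, your outline is indeed the Kahn--Szemer\'edi / Feige--Ofek strategy used in the cited proof: a grid-net reduction of the operator norm, a light/heavy dichotomy on coordinate pairs, Bernstein together with the $e^{Cn}$ net entropy for light couples, and a dyadic decomposition combined with a bounded-degree property and a discrepancy lemma for heavy couples.

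Two points are stated too loosely to stand on their own, and one is actually backwards. First, the light-couple union bound does not close by ``taking $b$ sufficiently large'': the statement must hold for every $b\ge 1$, so the $e^{2Cn}$ entropy has to be absorbed by choosing the \emph{absolute} constant $K$ inside the deviation $Kb^{3/2}\sqrt{np}$ large enough, not by enlarging $b$. Second, your heavy-couple estimate conflates the bounded-degree property with the discrepancy property: knowing that row sums of $A(\cG_{n,p}^{-})$ are at most $d_{(1)}(\cG_{n,p}^{-})$ does not on its own make each dyadic block contribute $\sqrt{d_{(1)}(\cG_{n,p}^{-})}$ times the $\ell^2$ masses of the blocks. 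One needs the two-regime discrepancy bound on $e(S,T)$ (either close to $p|S||T|$, or at most a multiple of $|S|$ times a logarithmic factor), proved for the \emph{unregularized} graph, and then fed into the Feige--Ofek bookkeeping together with the degree cap. Third, your concluding worry --- that the discrepancy lemma must be proved ``simultaneously for all admissible vertex removals,'' and that this is what forces the $10/p$ threshold and the $\sqrt{np}$ additive term --- is misplaced: removing rows and columns can only decrease $e(S,T)$, so the discrepancy property for $\cG_{n,p}$ passes to every subgraph $\cG_{n,p}^{-}$ at no extra union-bound cost, and the $\sqrt{np}$ term already arises from the light couples and from the deterministic $\E A$ contribution on the removed rows, not from any entropy over removal sets.
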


Although our arguments in Section \ref{sec:ut_pf_I} yield precise structural information about the random graph conditioned on the rare events, and can be easily adapted to address various large deviation questions, including both upper tail and lower tail probabilities, to illustrate the promise of Theorem \ref{thmversh} we sketch below, omitting various technicalities,  an alternate proof strategy for the upper tail large deviation of $\lambda_1(\cG_{n, p})$, in the regime  $p=c/n$. \\ 

\noindent {\it Alternate Proof Sketch of Theorem \ref{thm:1ut} for $r=1$ and $p=c/n$}: To begin with observe that the lower bound on the upper tail probability can be easily derived when $r=1$, since, by Lemma \ref{lem:gfact}(a),  
\begin{align}\label{eq:ut_lb_1}
\P(\lambda_1(\cG_{n, p}) \geq (1+\delta_1) \sqrt{L_p}) \geq \P(d_{(1)}(\cG_{n, p}) \geq (1+\delta_1)^2 L_p)= e^{ -(2\delta_1+\delta_1^2) \log n + o(\log n) } , 
\end{align} 
where the last step uses Proposition \ref{ppn:degree_ut}. 

For the upper bound, suppose $G \sim \cG_{n, p}$, fix $\varepsilon \in (0, \frac{1}{4})$, and denote by $\cV_{\e}$ the set of all vertices in $G$ with degree at least $\e L_p$ (we will refer to these as the `high-degree' vertices). The proof strategy of the upper bound now entails the following three steps: 

\begin{itemize}

\item {\it Bounding the number of high-degree vertices}: Note that when $p=c/n$, then $L_p = (1+o(1)) \frac{\log n}{\log \log n}$, and, hence, $\P(v \in \cV_{\varepsilon})= \P(\dBin(n-1, c/n) \geq \varepsilon L_p)= e^{\varepsilon \log n + o(\log n)}$. Then it is not hard to show that there $\kappa=\kappa(\e)>0$ such that, for all large enough $n$,
\begin{align}\label{eq:degree_Lp}
\P(|\cV_{\e}|\ge n^{1-\kappa})\le e^{-O_{\varepsilon}(n^{1-\kappa})}.
\end{align} 
(Heuristically, $|\cV_{\e}|$ is approximately a Poisson random variable with mean $n^{1-\e + o(1)}$, hence, taking $\kappa < \varepsilon$ small enough should imply \eqref{eq:degree_Lp}.) Denote by $\cE_1$ the event $\{|\cV_{\e}|\le n^{1-\kappa}\}$. Then using \eqref{eq:ut_lb_1} and \eqref{eq:degree_Lp} gives, 
\begin{align}\label{eq:event1}
\P( \cE_1 | \mathrm{UT}_1(\delta_1))=1-o(1). 
\end{align}

\item {\it Pruning edges between low-degree vertices}: The next step is to show that conditional on the upper tail event, the spectral norm of the subgraph of $G$ obtained by removing the edges between $\overline{\cV_{\varepsilon}}$ remains large with high probability. To this end, we denote by $G^{-}$ the graph obtained by removing from $G$ all the vertices in $\cV_{\varepsilon}$ and the edges incident on them. Then the removed graph $G^+=G\backslash G^{-}$ is the subgraph of $G$ consisting of all edges which has at one least endpoint with degree at least $\varepsilon L_p$. Now, since $\E(A)$ is the matrix all of whose entries are exactly $p$, except the diagonal entries, $\|\E(A)\|\le np$. Therefore,  by Theorem \ref{thmversh},  
\begin{align}\label{eq:ut_V}
\P\left(\|A(G^-)\|\ge c +K b^{3/2}(\sqrt{c} +\sqrt{\e L_p})\right) \le e^{ - b \log n}.
\end{align} 
Now, suppose $\cE_2:= \{|| A(G^+)|| \geq (1+\delta_1)\sqrt{L_p}- c - K b^{3/2}(\sqrt{c} + \sqrt{\e L_p } )    \}$ and $G \in  \mathrm{UT}_1(\delta_1) \bigcap \overline{\cE_2}$. Then, by triangle inequality, 
$G \in  \{\|A(G^-)\| \geq c +K b^{3/2}(\sqrt{c} +\sqrt{\e L_p}) \}$. This implies, choosing $b > 2\delta_1+ \delta_1^2 + 1$ and using \eqref{eq:ut_lb_1} and \eqref{eq:ut_V}, 
\begin{align}\label{eq:event2}
\P( \cE_2 | \mathrm{UT}_1(\delta_1))=1-o(1). 
\end{align}

\item {\it High degree vertices have small overlap}:  The final step is to show that the neighborhoods of the vertices in $\cV_\varepsilon$ have small overlaps. For this one can follow the proof of \cite[Lemma 2.5]{bbk} and show the following: For any $b >0,$ there exists a constant $C(\varepsilon)$ such that 
\begin{align}\label{eq:G_neighbor}
\P\left(\exists v\in \cV_{\e}  \text{ such that } N_G(v) \bigcap \left\{\cV_{\e}\bigcup_{u\in \cV_{\e}\setminus \{v\}}N_G(u) \right\}\ge C(\varepsilon) \right) \leq e^{- b \log n}. 
\end{align}
We denote the complement of the event above by $\cE_3$. Then choosing $b$ large enough and using \eqref{eq:ut_lb_1} and \eqref{eq:G_neighbor}, it follows that 
\begin{align}\label{eq:event3}
\P( \cE_3 | \mathrm{UT}_1(\delta_1))=1-o(1).
\end{align}
\end{itemize}
The arguments above can be combined to show that the event $\mathrm{UT}_1(\delta_1) \bigcap \cE_1 \bigcap \cE_2 \bigcap \cE_3 $ implies the event $\mathrm{degUT}_1((1+o(1)) \bar \delta_1)$, where $\bar \delta_1= (1+\delta_1 + \Theta_{r, c}(\sqrt{\varepsilon}) + o(1) )^2-1$. 
The required upper bound can then be derived by using Proposition \ref{ppn:degree_ut} and 
by taking limit as $n \rightarrow \infty$ followed by $\varepsilon \rightarrow 0$.

\section{Proof of Theorem \ref{thm:lt}}
\label{sec:lt_pf}

\subsection{Lower Bound on $\P(\mathrm{LT}_r(\bm \delta))$}
\label{sec:lt_I}
Recall the definitions of the lower tail events $\mathrm{LT}_r(\bm \delta)$ and $\mathrm{degLT}_r(\bm \delta)$ from \eqref{eq:lt_delta} and \eqref{eq:degree_lt_delta}, respectively.  The following lemma gives a lower bound of the lower tail probability in terms of the probability of degree lower tail event. 

\begin{lem}\label{lm:lt_lb} For $0 < \delta_1 \leq \delta_2 \leq \cdots \leq \delta_r < 1$, 
\begin{align}
\P(\mathrm{LT}_r(\bm \delta)) & \geq (1+o(1))  \P( \mathrm{degLT}_r((1+o(1)) \overline{\bm \delta})), \nonumber 
\end{align}
where $\overline{\bm \delta} = 1-(\bm 1 - \bm \delta)^2 $
\end{lem}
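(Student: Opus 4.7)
My plan is to exhibit a sub-event $\cE \subseteq \mathrm{LT}_r(\bm \delta)$ whose probability matches $\P(\mathrm{degLT}_r((1+o(1))\overline{\bm \delta}))$ up to a $(1-o(1))$ factor. The natural candidate is $\cE := \cM \cap \mathrm{degLT}_r((1+o(1))\overline{\bm \delta})$, where $\cM$ is the high-probability structural event developed in Section \ref{sec:ut_pf_I}: the event of \eqref{eq:utgood} when $np \geq e^{-(\log\log n)^2}$, and the event $\cN = \overline{\cD_{\log n}} \cap \overline{\cB_0}$ in the complementary regime.

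First I will verify that $\cE \subseteq \mathrm{LT}_r(\bm \delta)$. On $\cM$, Lemma \ref{lem:decompose} (respectively its Case~2 analogue) yields a decomposition $F = F_1 \cup F_2$, where $F_1$ is a disjoint union of stars (respectively small trees) and $\|F_2\| = o(\sqrt{L_p})$, $d_{(1)}(F_2) = o(L_p)$. If in addition $F \in \mathrm{degLT}_r((1+o(1))\overline{\bm \delta})$, then $d_{(a)}(F_1) \leq d_{(a)}(F) \leq (1-\delta_a)^2 (1+o(1)) L_p$, so the star identity $\lambda_1(K_{1,s}) = \sqrt{s}$ from Lemma \ref{lem:gfact}(c) gives $\lambda_a(F_1) \leq (1-\delta_a)(1+o(1))\sqrt{L_p}$. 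Weyl's inequality then produces
$$\lambda_a(F) \leq \lambda_a(F_1) + \|F_2\| \leq (1-\delta_a + o(1))\sqrt{L_p},$$
placing $F$ in $\mathrm{LT}_r(\bm \delta)$ after absorbing the $o(1)$ slack into the parameter.

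Next I will lower bound $\P(\cE)$ by the FKG inequality, following the pattern of Lemma \ref{lem:degfkg} and Section \ref{sec:degltpf}. Every constituent of $\cM$ (absence of a long cycle intersecting $X$ in half its vertices, absence of a vertex with too many edge-disjoint cycles through it, absence of a high-degree vertex with too many degree-inflated neighbors within distance two, and, in Case~2, absence of a tree of size $\log n$) is a \emph{decreasing} event in the edge-indicator product space $\{0,1\}^{\binom{n}{2}}$, and the same is clearly true of $\mathrm{degLT}_r(\overline{\bm \delta})$. Hence FKG for product measures gives
$$\P(\cE) \geq \P(\cM)\,\P(\mathrm{degLT}_r((1+o(1))\overline{\bm \delta})),$$
and Lemmas \ref{lem:largecycle}, \ref{lem:blem}, \ref{lem:clem}, and \ref{lem:dlem} together imply $\P(\cM) = 1 - o(1)$, finishing the argument.

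The main obstacle, I expect, is handling Case~2, where the components of $F_1$ are general trees rather than stars, and Lemma \ref{lem:gfact}(c)'s estimate $\lambda_1(T) \leq 2\sqrt{d_{(1)}(T) - 1}$ loses a factor of two and is therefore not sharp enough to close the argument with the correct constant $(1-\delta_a)$. To recover the sharp constant, the step bounding $\lambda_a(F_1)$ must instead invoke the alternative bound $\lambda_1(T) \leq \sqrt{|V(T)| - 1}$, coupled with an argument—mirroring the tree-size estimates of Lemma \ref{lem:dlem} and the BK-inequality-based partition argument in the proof of Theorem \ref{thm:1ut}—showing that on $\cN \cap \mathrm{degLT}_r((1+o(1))\overline{\bm \delta})$ the relevant tree components of $F_1$ each contain at most $(1-\delta_a)^2 L_p (1+o(1))$ vertices. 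This is where most of the technical work should concentrate; the other parts (FKG and Weyl) are routine given the structural inputs already built up in Section \ref{sec:ut_pf_I}.
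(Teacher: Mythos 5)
Your Case 1 argument is essentially correct, but in Case 2 (the range $(\log\log n)^2 \leq \log(1/np) \ll \log n$) there is a genuine gap, which you anticipate but your proposed fix does not close.

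In Case 1, using $\cM$ from \eqref{eq:utgood} is fine: $\cM$ is decreasing (each of $\cA_1, \cA_2, \cB, \cC$ is increasing because adding edges only grows $X$, creates cycles, and shortens distances), Lemma \ref{lem:decompose} gives $\cM \subseteq \cD$ where $\cD$ is the star decomposition event from \eqref{eq:event_D}, and the star identity $\lambda_1(K_{1,s}) = \sqrt{s}$ closes the argument without losing a factor of $2$. This is essentially the same route the paper takes.

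In Case 2, however, replacing $\cM$ by $\cN = \overline{\cD_{\log n}} \cap \overline{\cB_0}$ loses exactly the structure you need. $\cN$ guarantees only that, after removing a sparse union of cycles, the residual forest has no tree on more than $\log n \cdot L_p$ vertices; it does \emph{not} give the star decomposition, and $\log n \cdot L_p$ is far too large for the bound $\lambda_1(T) \leq \sqrt{|V(T)|-1}$ to say anything. Your proposed repair --- showing that ``the relevant tree components have at most $(1-\delta_a)^2 L_p (1+o(1))$ vertices'' on $\cN \cap \mathrm{degLT}_r$ --- is not a deterministic consequence of that event, and the BK / Lemma \ref{lem:dlem} machinery is a probability bound for the upper tail problem, not a pointwise implication you can slot into $\cE \subseteq \mathrm{LT}_r$. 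Moreover, even if you could establish that the tree components are small, the bound $\lambda_1(T)\leq\sqrt{|V(T)|-1}$ is only tight for stars, so you would still need to argue star structure rather than bound tree sizes. The paper's resolution is different and cleaner: it defines the \emph{decomposition event} $\cD$ of \eqref{eq:event_D}, proves in Lemma \ref{lm:events_II} that $\cD$ is decreasing, and in Case 2 proves $\P(\cD)\to 1$ directly (Lemma \ref{lem:lt_psmall}) by starting from the high-probability forest event $\cT$ and showing deterministically that $\cT \cap \cA_2 = \cT \cap \cC = \emptyset$, so $\cT \subseteq \cD$. Note that $\cT$ itself is not decreasing (removing an edge from a cycle can create a large tree component), so the paper applies FKG to $\cD$, not to $\cT$ or $\cN$; this is precisely the point your proposal misses. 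To repair your proof, replace $\cN$ with $\cD$ throughout and supply the argument of Lemma \ref{lem:lt_psmall}.
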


The lower bound on $\P(\mathrm{LT}_r(\bm \delta))$ in Theorem \ref{thm:lt} now follows from Proposition \ref{ppn:deglt}.

\subsubsection{\textbf{Proof of Lemma \ref{lm:lt_lb}}}
\label{sec:lt_pflb}
We rely heavily on the FKG inequality. To this end, define 
\begin{align}\label{eq:event_D}
\cD:= \left\{F \in \sG_n: F= F_1 \bigcup F_2 \text{ such that } 
~\begin{array}{l}
 E(F_1)\bigcap E(F_2) = \emptyset  \\
 F_1 \text{ is a disjoint union of stars }   \\
  ||F_2|| = o(\sqrt{L_p}) \text{ and } d_{(1)}(F_2)=o(L_p) 
\end{array} 
\right\}.
\end{align}
Thus in words, $\cD$ is the event that the graph admits a decomposition into two disjoint graphs where the former is a disjoint union of stars and the  latter has negligible maximum degree as well as spectral norm. We start with the following easy but useful fact.

\begin{lem}\label{lm:events_II} The events $\cD$ and $\mathrm{degLT}_r(\bar{\bm \delta})$ are decreasing events. 
\end{lem}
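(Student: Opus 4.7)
The plan is to unwind the definition of a decreasing event: $\cE \subseteq \sG_n$ is decreasing if whenever $F \in \cE$ and $F' \in \sG_n$ satisfies $E(F') \subseteq E(F)$, one has $F' \in \cE$. Thus for each of the two events we fix $F$ in the event, an arbitrary edge-subgraph $F' \subseteq F$, and verify that $F'$ lies in the same event. For $\mathrm{degLT}_r(\bar{\bm\delta})$ this is essentially immediate since removing edges cannot increase any degree; I expect the only care needed for $\cD$ is producing a witnessing decomposition for $F'$ from one for $F$, and checking that each of the three structural constraints in \eqref{eq:event_D} is preserved.

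For $\mathrm{degLT}_r(\bar{\bm\delta})$ I would argue as follows. Let $F \in \mathrm{degLT}_r(\bar{\bm\delta})$ and $F' \subseteq F$. Since $d_v(F') \le d_v(F)$ for every vertex $v$, sorting yields $d_{(a)}(F') \le d_{(a)}(F) \le (1-\bar\delta_a)L_p$ for every $1\le a \le r$, so $F' \in \mathrm{degLT}_r(\bar{\bm\delta})$.

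The slightly richer step is $\cD$. Given $F \in \cD$ with decomposition $F = F_1 \cup F_2$ as in \eqref{eq:event_D} and an edge-subgraph $F' \subseteq F$, I would set
\[
F_1' \;=\; \bigl([n], E(F_1) \cap E(F')\bigr), \qquad F_2' \;=\; \bigl([n], E(F_2) \cap E(F')\bigr),
\]
so that $F' = F_1' \cup F_2'$ with $E(F_1') \cap E(F_2') = \emptyset$. Since every connected component of $F_1$ is a star (possibly a single vertex), deleting edges within a star produces a smaller star together with isolated vertices; hence $F_1'$ is again a disjoint union of stars. For $F_2'$, trivially $d_{(1)}(F_2') \le d_{(1)}(F_2) = o(L_p)$, and I would invoke the standard monotonicity of the spectral radius for entrywise-nonnegative symmetric matrices ($0 \le B \le A$ entrywise implies $\|B\| \le \|A\|$, since for symmetric nonnegative matrices the operator norm coincides with the Perron--Frobenius eigenvalue) to conclude $\|F_2'\| \le \|F_2\| = o(\sqrt{L_p})$. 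All three defining properties of $\cD$ therefore hold for $F'$.

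The only step that is not utterly routine is the spectral inequality $\|F_2'\| \le \|F_2\|$, since removing an edge from a general symmetric matrix can in principle enlarge the operator norm; however, for adjacency matrices of simple graphs the matrices are nonnegative, and the Perron--Frobenius-type monotonicity rescues the argument cleanly. Once that observation is in place the proof is immediate, so I do not anticipate a genuine obstacle.
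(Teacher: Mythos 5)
Your proof is correct and follows essentially the same route as the paper: restrict the witnessing decomposition $F = F_1 \cup F_2$ to the edge-subgraph, observe that a subgraph of a disjoint union of stars is again a disjoint union of stars, that the maximum degree can only decrease, and that the spectral norm of a (nonnegative symmetric) adjacency matrix is monotone under edge removal; and for $\mathrm{degLT}_r$, that pointwise domination of degree sequences implies domination of the sorted order statistics. If anything, your invocation of Perron--Frobenius monotonicity is a more precise citation than the paper's reference to Lemma~\ref{lem:gfact} for the bound $\|F_2'\| \le \|F_2\|$, and your order-statistics step compresses the paper's more verbose case analysis into the standard one-line sorting fact.
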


\begin{proof} First we show $\cD$ is decreasing. Take $G_1 \in \cD$ and $G_2$ a subgraph of $G_1$. By definition of $\cD$, $G_1= F_1 \bigcup F_2$ such that $F_1$ is a disjoint union of stars, and $F_2$ is such that $||F_2|| = o(\sqrt{L_p})$ and $d_{(1)}(F_2)=o(L_p)$. Take $F_3$ and $F_4$ to be the corresponding subgraphs of $F_1$ and $F_2$ in $G_2$. Observe that, $||F_4|| \leq ||F_2||=o(\sqrt{L_p})$ by Lemma \ref{lem:gfact} and $d_{(1)}(F_4) \leq d_{(1)}(F_2)=o(L_p)$ as the maximum degree can only decrease in a subgraph. Finally, since any subgraph of a disjoint union of stars is also a disjoint union of stars, we are through.

To show $\cE := \mathrm{degLT}_r(\bar{\bm \delta})$ is a decreasing  event, take $F_1 \in \cE$ and $F_2 \in \sG_n$ with  $E(F_2) \subset E(F_1)$. Label the vertices of $F_1$ by $v_1, \ldots, v_n$ in non-increasing order of their degrees in $F_1$. Similarly, suppose $u_1, \ldots, u_n$ are the vertices of $F_2$ in non-increasing order of their degrees in $F_2$. Now, fix a vertex $u_i$ in $F_2$ and assume that it corresponds to the vertex $v_j$ in $F_1$. If $i=j$, then $d_{F_2}(u_i) \leq d_{F_1}(v_i)$, since $F_2$ is a subgraph of $F_1$. If $i <j$, then $d_{F_2}(u_i) \leq d_{F_1}(v_j) \leq d_{F_1}(v_j)$. Finally, $i >j$, then there exists $i' < i \leq j'$ such that the vertex $u_{i'}$ in $F_2$ corresponds to the vertex $v_{j'}$ in $F_1$. Then we have, $d_{F_2}(u_i) \leq d_{F_2}(u_{i'}) \leq d_{F_1}(v_{j'}) \leq d_{F_1}(v_i)$. This shows that $i$-th largest degree in $F_2$ is at most the $i$-th largest degree in $F_1$, showing that $\cE$ is a decreasing event.  
\end{proof}

Next, we show that the events  $\cD$ and $\mathrm{degLT}_r(\bar{\bm \delta})$ together imply the lower tail event $\mathrm{LT}_r(\bm \delta  )$, up to a $(1+o(1))$ factor.  

\begin{lem} The event $\cD \bigcap \mathrm{degLT}_r((1+o(1)) \bar{\bm \delta})$ implies the eigenvalue lower tail event $\mathrm{LT}_r(\bm \delta )$. 
\end{lem}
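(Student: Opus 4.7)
The plan is a short, direct argument combining the explicit spectrum of a disjoint union of stars with Weyl's inequality. Take $F \in \cD \cap \mathrm{degLT}_r((1+o(1))\bar{\bm\delta})$ and invoke the decomposition guaranteed by $\cD$ to write $F = F_1 \cup F_2$, where $F_1$ is a disjoint union of stars of sizes $s_1 \geq s_2 \geq \cdots \geq s_\ell$ and $F_2$ satisfies $\|F_2\| = o(\sqrt{L_p})$, $d_{(1)}(F_2) = o(L_p)$, with $E(F_1) \cap E(F_2) = \emptyset$. Since each star $K_{1,s}$ contributes eigenvalues $\{\sqrt{s}, -\sqrt{s}\}$ together with zeros, and the spectrum of $F_1$ is the disjoint union of the spectra of its connected components, the top $r$ eigenvalues of $F_1$ are exactly $\sqrt{s_1}, \sqrt{s_2}, \ldots, \sqrt{s_r}$ (with the convention that $s_a = 0$ if $a > \ell$).

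Next, I would show that $s_a \leq d_{(a)}(F)$ for $1 \leq a \leq r$. The quantity $s_a$ is the degree in $F_1$ of the center of the $a$-th largest star, so $s_a \leq d_{(a)}(F_1)$. Since $F_1$ is a subgraph of $F$, pointwise $d_v(F_1) \leq d_v(F)$ for every vertex $v$, which on sorting both sequences in non-increasing order yields $d_{(a)}(F_1) \leq d_{(a)}(F)$. Combined with the degree hypothesis, namely $d_{(a)}(F) \leq \bigl(1 - (1+o(1))\bar\delta_a\bigr) L_p = \bigl((1-\delta_a)^2 + o(1)\bigr) L_p$ (using $1 - \bar\delta_a = (1-\delta_a)^2$), this yields
\[
\lambda_a(F_1) \;=\; \sqrt{s_a} \;\leq\; \bigl(1 - \delta_a + o(1)\bigr)\sqrt{L_p}.
\]

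Finally, applying Weyl's inequality to $A(F) = A(F_1) + A(F_2)$ (valid by the edge-disjointness in the definition of $\cD$) gives
\[
\lambda_a(F) \;\leq\; \lambda_a(F_1) + \lambda_1(F_2) \;\leq\; \lambda_a(F_1) + \|F_2\| \;\leq\; (1-\delta_a)\sqrt{L_p} + o(\sqrt{L_p}).
\]
The final step is to absorb the residual $o(\sqrt{L_p})$: one chooses the $o(1)$ factor defining $(1+o(1))\bar{\bm\delta}$ to vanish slowly enough that the corresponding gap $\sqrt{(1-\delta_a)^2 L_p} - \sqrt{((1-\delta_a)^2 + o(1))L_p}$ dominates the error $\|F_2\| = o(\sqrt{L_p})$, which is easily arranged since the gap is $\Theta(o(1))\sqrt{L_p}/(1-\delta_a)$ and only the leading asymptotics matters. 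This delivers $\lambda_a(F) \leq (1-\delta_a)\sqrt{L_p}$ for all $1 \leq a \leq r$, i.e.\ $F \in \mathrm{LT}_r(\bm\delta)$. The only subtlety is this final bookkeeping of the $o(1)$ factors, but it is purely cosmetic and does not affect the log-log-scale probability bounds used in Theorem~\ref{thm:lt}.
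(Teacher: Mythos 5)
Your proof is correct and takes essentially the same route as the paper's: decompose $F = F_1 \cup F_2$ via $\cD$, read off the spectrum of the star forest $F_1$ explicitly (so $\lambda_a(F_1) = \sqrt{s_a} \leq \sqrt{d_{(a)}(F)}$), apply Weyl's inequality to control the perturbation by $\|F_2\| = o(\sqrt{L_p})$, and absorb the residual error into the $(1+o(1))$ slack in the degree hypothesis. The only cosmetic difference is that you carry the argument out uniformly over all $1 \leq a \leq r$ and spell out the monotonicity $d_{(a)}(F_1) \leq d_{(a)}(F)$, whereas the paper writes the chain of inequalities only for the index $r$ (implicitly the same calculation applies to each $a$) and phrases the slack via an auxiliary parameter $\bm\delta' = \bm\delta + o(1)$ rather than directly through the $(1+o(1))\bar{\bm\delta}$ factor.
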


\begin{proof} Take $0 < \delta_1' \leq \delta_2' \leq  \ldots \leq \delta_r' < 1$ and $F \in \cD \bigcap \mathrm{degLT}_r(\overline{\bm \delta'})$, where $\overline{\bm \delta'} = \bm 1-(\bm 1 - \bm \delta')^2 $. Then  $F = F_1 \bigcup F_2$ as in \eqref{eq:event_D} and, by Weyl's inequality,  
$$\lambda_r(F) \leq \sqrt{d_{(r)}(F_1)}+ o(\sqrt{L_p}) \leq \sqrt{d_{(r)}(F)} + o(\sqrt{L_p}) \leq  (1- \delta_r' + o(1))  \sqrt{L_p},$$ 
which implies $F \in \mathrm{LT}_r(\bm \delta )$, by choosing $\bm \delta'= \bm \delta + o(1)$ in such a way that the $o(1)$-term above vanishes.
\end{proof}

The proof of Lemma \ref{lm:lt_lb} can be completed using the above lemmas. We consider, as in the proof of Theorem \ref{thm:1ut}, the two cases: (1) where  $e^{-(\log \log n)^2} \leq n p \ll \sqrt{\log n/\log \log n}$ and (2) for  $(\log \log n)^2  \leq \log(1/n p)  \ll \log n$. \\

\noindent{{\it Case}} $1$: $e^{-(\log \log n)^2} \leq n p \ll \sqrt{\log n/\log \log n}$. Recall the event $\cM$ from \eqref{eq:utgood} and that $\P(\cM)\ge 1-e^{-\omega(\log n)}$and note that $\cM$
implies the event $\cD$, thanks to Lemma \ref{lem:decompose}. Therefore, 
$$\P(\cD) \geq \P(\cM) \rightarrow 1.$$ Then, by Lemma \ref{lm:events_II} and the FKG inequality, $\P(\cD|\mathrm{degLT}_r((1+o(1)) \bar{\bm \delta})) \geq \P(\cD) \rightarrow 1$, which implies, 
\begin{align*}
\P(\mathrm{LT}_r(\bm \delta  ) ) \geq \P\left(\cD \bigcap \mathrm{degLT}_r((1+o(1))  \bar{\bm \delta})\right) =(1+o(1)) \P(\mathrm{degLT}_r((1+o(1))  \bar{\bm \delta})), 
\end{align*}
completing the proof of Lemma \ref{lm:lt_lb}, when $e^{-(\log \log n)^2} \leq n p \ll \sqrt{\log n/\log \log n}$. \\

\noindent{{\it Case $2$}} : $(\log \log n)^2  \leq \log(1/n p)  \ll \log n$. In this case, even though we cannot invoke Lemmas \ref{lem:largecycle} and \ref{lem:clem}, we will modify the proof of Lemma \ref{lem:decompose} to directly argue that $\P(\cD) \rightarrow 1$. To this end, recall the decomposition of the vertices of $G$ in to three components $X_1, X_2, Y$, as in the proof of Lemma \ref{lem:decompose} as illustrated in Fig \ref{fig:graph_decomposition}. 

\begin{lem}\label{lem:lt_psmall} For  $(\log \log n)^2  \leq \log(1/n p)  \ll \log n$, $\P(\cD) \rightarrow 1$.
\end{lem}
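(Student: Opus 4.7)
The plan is to follow the strategy of Lemma \ref{lem:decompose} while substituting, in the very sparse regime $\log(1/np) \geq (\log\log n)^2$, direct first-moment inputs in place of Lemmas \ref{lem:largecycle} and \ref{lem:clem}, which are not available here. The crucial new feature of Case 2 is that the random graph is now drastically sparser: the expected number of cycles of length $L$ in $\cG_{n,p}$ is bounded by $(np)^L/(2L)$, so
\[
\sum_{L\geq 3} \E[\#\{\text{cycles of length }L\}] \leq \frac{(np)^3}{1-np} = o(1),
\]
and hence with probability $1-o(1)$, $G := \cG_{n,p}$ is already a forest. This makes the cycle-removal step vacuous, i.e.\ $H_1 = H_3 = H_4 = \emptyset$ and $G_1 = G$; Properties (1) and (3) of the set $\cM$ from \eqref{eq:utgood} then hold trivially.

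Next I would establish direct analogues of Properties (2) and (4) of $\cM$ by Markov's inequality applied to carefully chosen expected counts, using the Chernoff-Stirling tail
\[
\P(\deg(v) \geq k) \leq \left(\frac{enp}{k}\right)^{k}
\]
together with the identity $L_p\log(1/np) = (1+o(1))\log n$ valid throughout Case 2. For Property (4), the relevant expected count is of triples $(v,u,w)$ with $u \sim v$, $w \sim v$, and $\deg(u), \deg(w) \geq \Delta_p^{3/4}$; the very rapid decay of $(np)^{\Delta_p^{3/4}}$ in this regime (since $\log(1/np) \gg \log L_p$, so the Stirling correction $\log\Delta_p^{3/4}$ is negligible compared to $\log(1/np)$) provides the needed slack. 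For Property (2), the analogous expected count controls the size of the star in $G_1$ centered at any vertex $v$, exploiting that in a forest, a large neighborhood of $v$ in $X$ immediately gives a star subgraph in $G_1$.

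Once these four properties are in place, the decomposition argument from the proof of Lemma \ref{lem:decompose} applies verbatim: partitioning $[n] = X_1 \cup X_2 \cup Y$ as in Figure \ref{fig:graph_decomposition}, defining $F_1$ as the union of stars from $X_1$ to $T_1$ and setting $F_2 = G_1 \cup G_2 \cup G_4 \cup G_{3,2}$, the same bounds $\|G_1\|, \|G_2\|, \|G_4\|, \|G_{3,2}\| = o(\sqrt{L_p})$ obtain, together with $d_{(1)}(F_2) = o(L_p)$. Combined with $\P(G\text{ is a forest}) = 1-o(1)$ this yields $\P(\cG_{n,p} \in \cD) \to 1$.

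The main technical obstacle will be in controlling Property (2): in Case 2 the threshold set $X$ can have size $n^{1-o(1)}$, considerably larger than in Case 1, so the first-moment estimate for $\max_v |N_G(v) \cap X|$ must be carried out sharply and the combinatorial overhead of union-bounding over vertices absorbed by the exponential gain from the degree tails. The key quantitative fact that carries everything through is $\log(1/np)/\log L_p \to \infty$, which ensures that the tail exponents in the Chernoff bounds compound strictly faster than the combinatorial blowup, even when summing over all vertices and their distance-$2$ neighborhoods.
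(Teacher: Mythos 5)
Your strategic outline — show $\cG_{n,p}$ is whp a forest by a first-moment bound on cycles, replace Lemmas~\ref{lem:largecycle} and \ref{lem:clem} by direct moment estimates in the very sparse regime, and then run the decomposition of Lemma~\ref{lem:decompose} — is coherent, and the cycle count is fine. However there is a genuine gap in the moment estimate you propose for Property~(4). The quantity $(np)^{\Delta_p^{3/4}}$ does \emph{not} decay rapidly here: taking logarithms and using $\Delta_p \log(1/np) = (1+o(1))\log n$ (valid throughout \eqref{eq:rangep}), one gets $(np)^{\Delta_p^{3/4}} = n^{-(1+o(1))\Delta_p^{-1/4}} = n^{-o(1)}$, since $\Delta_p \to \infty$ in Case~2. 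Consequently, the expected number of "triples" $(v,u,w)$ you describe is of order $n\,(np)^2\,n^{-2(1+o(1))\Delta_p^{-1/4}}$, and since $n(np)^2 = n^{1-o(1)}$ while the tail factor is only $n^{-o(1)}$, this expected count diverges: the union bound over the vertex $v$ overwhelms the slowly decaying tails. To make a first-moment argument close here one must count $k$-tuples of high-degree vertices within distance two of $v$ with $k = \Delta_p^{1/3}$ growing; only then does the compounded tail $n^{-(1+o(1))k\Delta_p^{-1/4}} = n^{-(1+o(1))\Delta_p^{1/12}}$ beat the polynomial factors. So the counting object is wrong, and the fix is a real change rather than a cosmetic one. (An analogous but less severe concern applies to Property~(2); there the natural count is already over $\Delta_p^{7/8}$-tuples so the exponent comes out right, but you should say so.)

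You also do not exploit the ingredient that makes the paper's proof essentially computation-free. The paper invokes \cite[Lemma 2.2]{KS} for the stronger event $\cT$: that whp $\cG_{n,p}$ is a forest \emph{all of whose tree components have size at most $(1+o(1))\Delta_p$}. Given $\cT$ it is then a short \emph{deterministic} argument that $\cA_2$ and $\cC$ cannot occur: a vertex $v$ with $\geq \Delta_p^{1/3}$ vertices of degree $\geq \Delta_p^{3/4}$ within distance two would — since in a forest those neighborhoods are essentially disjoint — lie in a tree of size $\gtrsim \Delta_p^{1/3}\cdot\Delta_p^{3/4}=\Delta_p^{13/12}\gg \Delta_p$, contradicting the component-size bound, and similarly for $\cA_2$. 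Hence $\P(\cD)\ge \P(\cT)\to 1$ with no moment computation at all. The extra piece you are missing is precisely the component-size bound in $\cT$, which turns the probabilistic assertions about $\cA_2$ and $\cC$ into purely combinatorial ones.
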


\begin{proof} Let  $G \sim  \cG_{n, p}$, be a realization of the random graph, and $\cT$ be the event that $G$ is a forest and each component of $G$ is of size at most $(1+o(1)) \Delta_p$. It follows from \cite[Lemma 2.2]{KS} that $\P(\cT) \rightarrow 1$. Next, recall the event $\cA_2$ from \eqref{eq:A2} and the event $\cC$ from Lemma \ref{lem:clem}. To begin with assume $G \in \cT \bigcap \left( \cA_2   \bigcup \cC \right)$. There are two cases: 

\begin{itemize}

\item $G \in  \cT \bigcap \cA_2 $: Then there exists a vertex  $v \in X_1$ which has at least $\Delta_p^{7/8}$ neighbors in $X_1 \bigcup X_2$ (recall the definitions of $X_1$ and $X_2$ from the proof of Lemma \ref{lem:decompose}). Note that every vertex in $X_1 \bigcup  X_2$ has degree at least $\Delta_p^{1/3}$, which implies that the induced subgraph of $G$ on the set vertices\footnote{For $S \subseteq [n]$, $N_G(S)$ denotes the neighborhood of $S$ in $G$, that is, $N_G(S)=\{v \in [n]: (u, v) \in E(G) \text{ for some } u \in S\}. $} 
$$v \bigcup N_{G}(v) \bigcup N_G(N_{G}(v)) $$ is a tree of size at least $\Delta_p^{7/8} (\Delta_p^{1/3}- 1 ) \gg \Delta_p$, which contradicts $G \in \cT$. This shows that $\cT \bigcap \cA_2$ is empty.

\item $G \in  \cT \bigcap \cC $: Then there exists a vertex  $v \in [n]$ which has at least $\Delta_p^{1/3}$ other vertices of $G$, each with degree greater than $\Delta_p^{3/4}$, within distance at most two. This implies the induced subgraph of $G$ on the set vertices 
$$v \bigcup N_{G}(v) \bigcup N_G(N_{G}(v)) \bigcup N_G(N_G(N_{G}(v)))$$ is a tree of size at least $\frac{1}{2}\Delta_p^{1/3} (\Delta_p^{3/4}- 1) \gg \Delta_p$, which contradicts $G \in \cT$. To see the last deduction, note that by hypothesis either $N_{G}(v)$ or $N_G(N_{G}(v))$ contains $\frac{1}{2}\Delta_p^{1/3}$ vertices of degree at least $\Delta_p^{3/4}$. This shows that $\cT \bigcap \cC$ is empty.  

\end{itemize} 
The two cases above show that $\cT \bigcap \left( \cA_2   \bigcup \cC \right)$ is empty. Therefore, it suffices to assume that  $G \in  \cT \bigcap  \overline \cA_2   \bigcap \overline \cC$.  We will show below that $\cT \bigcap  \overline \cA_2   \bigcap \overline \cC  \subseteq \cD$. This will complete the proof of the lemma, since 
$$\P(\cD) \geq \P \left( \cT \bigcap  \overline \cA_2   \bigcap \overline \cC \right) = \P(\cT) \rightarrow 1.$$

To show $\cT \bigcap  \overline \cA_2   \bigcap \overline \cC \subseteq \cD$, assume $G \in \cT \bigcap  \overline \cA_2   \bigcap \overline \cC$.  As in the proof of Lemma \ref{lem:decompose}, decompose the vertices of $G$ in to three components $X_1, X_2, Y$, and consider the graphs $G_1, G_2, G_3, G_4$ as in Fig \ref{fig:graph_decomposition}. Also, as in the proof of Lemma \ref{lem:decompose}, partition the vertices of $Y$ into two parts $Y=T_1\bigcup T_2$, where $T_1$ is the set of vertices in $Y$ with degree 1 in the graph $G_3$, and $T_2= Y \backslash T_1$. Since, $G \in \cT \bigcap  \overline \cA_2   \bigcap \overline \cC,$ we can assume that for every $v \in X_1$, the number edges from $v$ to $T_2$ in the graph  $G_3$ is at most $\Delta_p^{1/3}$, since otherwise the fact that $G\in \cT$ would imply $G\in \cC.$

  Then we split the graph $G_3$ in to two parts $F_1$ and $G_{3, 2}$, where $F_1$ is the set of all edges from $X_1$ to $T_1$ and $G_{3, 2}$ the set of edges from $X_1$ to $T_2$. Note that the maximum degree of the graph $G_{3, 2}$ is $np(1+1/\log\log n)+\Delta^{1/3}_p=o(\sqrt{L_p})$ as argued in Lemma \ref{lem:decompose}. This is because the maximum degree of any vertex in $Y$ is at most this and so is the maximum degree of any vertex in $X_1$ in to $T_2$ by the above discussion. Hence $\lambda_1(G_{3, 2}) \leq d_{(1)}(G_{3, 2})=o(\sqrt{L_p})$. Moreover,  
 since $G_1, G_2$ and $G_4$ are all forests and $\max\{d_{(1)}(G_1), d_{(1)}(G_2), d_{(1)}(G_4)\}=o(L_p)$, for $G \in \cT \bigcap  \overline \cA_2   \bigcap \overline \cC$,  by Lemma \ref{lem:gfact}(c), 
$$||G_1|| \leq 2 \sqrt{d_{(1)}(G_1)} =o(\sqrt{L_p}), \quad ||G_2|| =o(\sqrt{L_p}), \quad \text{and} \quad  ||G_4|| =o(\sqrt{L_p}).$$ 
Here the bounds on $\max\{d_{(1)}(G_2), d_{(1)}(G_4)\}$ follow from definition while the bound on $d_{(1)}(G_1)$ follows since we are on $\overline \cA_2. $ Therefore, setting $F_2=G_1\bigcup G_2 \bigcup G_4\bigcup G_{3, 2}$, it follows that $||F_2||=o(\sqrt{L_p})$. Finally, noting that $F_1$ is a disjoint union of stars, it follows that $G \in \cD$, as required. 
\end{proof}

The proof of Lemma \ref{lm:lt_lb} can now be completed as before: By Lemma \ref{lm:events_II} and the FKG inequality, $\P(\cD|\mathrm{degLT}_r((1+o(1)) \bar{\bm \delta} )) \geq \P(\cD) \rightarrow 1$, where the last step uses Lemma \ref{lem:lt_psmall} above. This shows, $$\P(\mathrm{LT}_r(\bm \delta  ) ) \geq (1+o(1)) \P(\mathrm{degLT}_r((1+o(1)) \bar{\bm \delta})),$$ for $(\log \log n)^2  \leq \log(1/n p)  \ll \log n$, completing the proof of Lemma \ref{lm:lt_lb}. \hfill $\Box$ \\

\subsection{Upper bound on $\P(\mathrm{LT}_r(\bm \delta))$}
\label{sec:lt_II}

To begin with, observe that if $r=1$, then the upper bound follows trivially from 
Lemma \ref{lem:gfact}(a) and Proposition \ref{ppn:deglt},
$$\P(\lambda_1(\cG_{n, p}) \leq (1-\delta_1) \sqrt{L_p}) \leq \P(d_{(1)}(\cG_{n, p})\leq (1-\delta_1)^2 L_p) = e^{- n^{2\delta_1-\delta_1^2 + o(1)}},$$
which gives the desired limit. 
For general $r$, however, the situation is more delicate. Here, the key step is the following Ramsey-type result which shows, fixing $\varepsilon > 0$ small enough depending on $r$, if for some $K$ (depending on $\varepsilon$), the $K$-th largest degree of $\cG_{n, p}$ is $\varepsilon$ more larger than a certain atypical value, then either the largest eigenvalue $\lambda_1(\cG_{n, p})$ or the $r$-largest eigenvalue $\lambda_r(\cG_{n, p})$ will be large enough to violate the lower tail event.

\begin{lem}\label{lm:r_I} 
Given $r \geq 1$ and $\bm \delta=(\delta_1, \delta_2, \ldots, \delta_r)$ as above, choose $0 < \varepsilon <  \min \{ (1-\delta_r)^2/8 r, 1/r^{1/4}, \frac{1}{4} \}$ and $\e_1 > 0$ such that $ \e_1 \le \min\{ \e^2/r,  \frac{1}{4} \}$. Then for $K \geq (4/\varepsilon_1^4+1) 4^{1/\varepsilon^4}$ and for $n$ large enough, the event $\{G \in \sG_n : d_{(K)}(G) \geq (1-\delta_r + \varepsilon)^2 L_p \} \bigcap \{G \in \sG_n : d_{(1)}(G)\leq L_p\}$, implies  the event 
$$\left\{\{G \in \sG_n: \lambda_1(G) \geq 2 \sqrt{L_p}\} \bigcup \{ G \in \sG_n: \lambda_r (G) \geq (1-\delta_r + \e/2 ) \sqrt{L_p}\} \right\}.$$ 
\end{lem}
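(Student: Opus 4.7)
The plan is to apply Ramsey's theorem to the set of $K$ highest-degree vertices, producing a dichotomy between a concentrated substructure that forces $\lambda_1(G) \ge 2\sqrt{L_p}$ and an ``almost-disjoint'' $r$-tuple whose induced star-like structure forces $\lambda_r(G) \ge (1-\delta_r + \varepsilon/2)\sqrt{L_p}$. Let $v_1,\ldots,v_K$ be vertices with $D := (1-\delta_r+\varepsilon)^2 L_p \le d_G(v_i) \le L_p$, which exist by the degree hypotheses. Aiming to produce the second alternative, I assume $\lambda_1(G) < 2\sqrt{L_p}$ throughout.

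The main step is to colour pairs $\{v_i,v_j\}$ by how ``dependent'' their neighbourhoods are: call the pair \emph{red} when $|N(v_i)\cap N(v_j)| \ge \alpha_1 L_p$ for an appropriately chosen threshold $\alpha_1$ of order $\varepsilon_1^{O(1)}$, and \emph{blue} otherwise. The Erd\H{o}s--Szekeres bound $R(r,s)\le \binom{r+s-2}{r-1}$, combined with the hypothesis $K\ge(4/\varepsilon_1^4+1)4^{1/\varepsilon^4}$, forces either a red clique of size $s$ of order $1/\alpha_1$ or a blue clique of size $r$. In the red case, inserting the indicator $\mathbf{1}_W$ of a red $s$-clique $W=\{w_1,\ldots,w_s\}$ into the Rayleigh quotient for $A(G)^2$ gives
\[
\lambda_1(G)^2 \;\ge\; \frac{\mathbf{1}_W^{T} A(G)^2 \mathbf{1}_W}{s} \;=\; \frac{1}{s}\Bigl(\sum_{k} d_G(w_k) + 2\!\!\sum_{k<l}|N(w_k)\cap N(w_l)|\Bigr) \;\ge\; D + (s-1)\alpha_1 L_p,
\]
which exceeds $4L_p$ once $s\gtrsim 1/\alpha_1$, contradicting $\lambda_1(G)<2\sqrt{L_p}$. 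In the blue case, for each of the $r$ selected vertices $w_1,\ldots,w_r$ I assign a leaf set
\[
L_k \;\subseteq\; N(w_k)\setminus\Bigl(\{w_1,\ldots,w_r\}\cup\bigcup_{j\ne k} N(w_j)\Bigr),
\]
whose size is at least $D-(r-1)\alpha_1 L_p - (r-1) \ge (1-\delta_r+\varepsilon/2)^2 L_p(1+o(1))$ by the hypotheses $\varepsilon_1\le \varepsilon^2/r$ and $\varepsilon<(1-\delta_r)^2/(8r)$; consequently the induced vertex-disjoint stars $S_k$ satisfy $\lambda_1(S_k)=\sqrt{|L_k|}\ge (1-\delta_r+\varepsilon/2)\sqrt{L_p}(1+o(1))$.

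The main difficulty is to pass from $\lambda_r\bigl(\bigsqcup_k S_k\bigr)$ to $\lambda_r(G)$, since $\bigsqcup_k S_k$ is only a subgraph of $G$ and extra edges of $G$---particularly those running between the leaf sets $L_k$ and $L_l$---could in principle depress the $r$-th eigenvalue. I plan to overcome this via a second Ramsey colour: declare $\{v_i,v_j\}$ \emph{orange} if $|E(N(v_i),N(v_j))|$ exceeds a threshold of order $\alpha_2 L_p^{3/2}$ with $\alpha_2$ polynomial in $\varepsilon$. A single orange pair already exposes an induced subgraph whose average-degree bound $\lambda_1 \ge 2e/|V|$ produces $\lambda_1(G)\ge 2\sqrt{L_p}$, returning us to the first alternative. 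Otherwise, after removing an $o(1)$-fraction of ``heavy'' leaves from each $L_k$, the remaining cross-edges between distinct leaf sets form a graph with spectral norm $o(\sqrt{L_p})$, and Weyl's inequality transfers the eigenvalue bound from the cleaned disjoint star union to $G$. The doubly-exponential factor $4^{1/\varepsilon^4}$ in the hypothesis on $K$ reflects the cost of this three-colour (equivalently, iterated two-colour) Ramsey application with thresholds tuned to the scales $\varepsilon$ and $\varepsilon_1$; the factor $4/\varepsilon_1^4+1$ records the size of the red clique needed in the first Ramsey step.
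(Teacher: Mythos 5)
Your overall two-stage plan --- first isolate a set of vertices whose neighbourhoods have pairwise small overlap, then control the cross-edges between the surviving neighbourhoods --- is the right high-level shape, and your Case 2 (disjoint stars) is in the same spirit as the paper's. But there are two concrete gaps that make the argument, as written, fail with the stated hypothesis on $K$.

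\textbf{The first step cannot afford Ramsey.} You propose to two-colour pairs of $V_K$ by neighbourhood overlap and apply Ramsey to extract either a red clique (pairwise overlap $\ge \alpha_1 L_p$) or a blue clique. To then run a second colouring on the blue clique you need that blue clique to have size $L = 4^{1/\varepsilon^4}$, and your red clique needs size $s \sim 1/\alpha_1 \sim 1/\varepsilon_1$ so that the $A^2$-Rayleigh bound $\lambda_1(G)^2 \ge D + (s-1)\alpha_1 L_p$ crosses $4L_p$. So you need $K \ge R(s, L)$, which by the very bound you cite is of order $L^{\Omega(1/\varepsilon_1)}$. The hypothesis only gives $K \ge (4/\varepsilon_1^4 + 1)L$. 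The paper sidesteps Ramsey here by forming the auxiliary graph $H$ on $V_K$ whose edges are the high-overlap pairs, and observing that one needs only a bound on the \emph{maximum degree} of $H$: if some vertex $v$ has $\ge 4/\varepsilon_1^4$ high-overlap partners (a \emph{star} in $H$, not a clique), a carefully weighted test vector (weight $\sqrt{L_p}$ on $v$, $\varepsilon_1^2\sqrt{L_p}$ on the partners, $1$ on the leaves of $v$) already forces $\lambda_1 \ge 2\sqrt{L_p}$; otherwise $H$ has maximum degree $< 4/\varepsilon_1^4$ and therefore an independent set of size $K/(4/\varepsilon_1^4+1) \ge L$ by the greedy bound. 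This is exactly why the budget $K = (4/\varepsilon_1^4+1)\,4^{1/\varepsilon^4}$ is the right order: a Tur\'an-type step followed by a \emph{single} Ramsey step. Your unweighted $\mathbf 1_W^\top A^2 \mathbf 1_W$ computation is clean, but it requires \emph{pairwise} large overlap (a clique in $H$); on the star configuration it only gives $\lambda_1^2 \gtrsim D + 2\alpha_1 L_p$, which is bounded and does not exceed $4L_p$, so it cannot replace the weighted test vector.

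\textbf{A single ``orange'' pair does not force $\lambda_1 \ge 2\sqrt{L_p}$.} You claim that if one pair $(v_i,v_j)$ has $\ge \alpha_2 L_p^{3/2}$ edges between their (disjointified) neighbourhoods, with $\alpha_2$ a small power of $\varepsilon$, then the average-degree bound on the induced subgraph gives $\lambda_1 \ge 2\sqrt{L_p}$. But that subgraph has at most $\approx 2L_p$ vertices, so the bound gives only $\lambda_1 \gtrsim \alpha_2\sqrt{L_p}$, which is $\ll 2\sqrt{L_p}$ since $\alpha_2 < 1$. The paper's Case~1 instead accumulates contributions from \emph{all} $\binom{r'}{2}$ cross-edge-rich pairs within the $r'$-element independent set of the second auxiliary graph, with $r' = 1/\varepsilon_2^2$; the resulting Rayleigh quotient is $\gtrsim r'\varepsilon_2 \sqrt{L_p} = \varepsilon_2^{-1}\sqrt{L_p}$, which \emph{does} exceed $2\sqrt{L_p}$. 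This is why the second-stage Ramsey must produce a monochromatic set of size $r'$ (not just size $r$, and not just a single pair). Consequently your final ``clean heavy leaves and apply Weyl'' transfer also isn't needed in the paper: rather than modifying $G$, the paper builds $r$ disjointly-supported test vectors $\phi_1,\dots,\phi_r$ and bounds the Rayleigh quotient of an arbitrary linear combination directly via Courant--Fischer, using the few-cross-edges property of the clique in the second auxiliary graph to control the off-diagonal terms $\langle\phi_a,A\phi_b\rangle$. Your cleaning strategy would have to show that the residual cross-edge graph has spectral norm $o(\sqrt{L_p})$, but with a per-pair edge budget of order $\alpha_2 L_p^{3/2}$ it is not clear how; this is precisely the delicate point the paper's Rayleigh-quotient computation avoids.
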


The proof of the lemma is given below in Section \ref{sec:pfr_I}.  First, we show  how this can be used to complete the proof of upper bound of the lower tail probability. To this end, note that, since $\lambda_1(\cG_{n, p}) \geq \sqrt{d_{(1)}(\cG_{n, p})}$,  the event $\{d_{(1)}(\cG_{n, p}) > L_p\}$ implies the event $\{\lambda_1(\cG_{n, p}) > \sqrt{L_p}\}$. This shows, $\P(\mathrm{LT}_r(\bm \delta) \bigcap \{d_{(1)}(\cG_{n, p}) > L_p\}) =0$. Then, by Proposition \ref{ppn:deglt}, 
\begin{align*}
\P( \mathrm{LT}_r(\bm \delta)  )
& \leq \P(\{d_{(K)}(\cG_{n, p}) \leq (1-\delta_r + \varepsilon)^2 L_p\}) +  \P\left(\mathrm{LT}_r(\bm \delta) \bigcap \{d_{(K)}(\cG_{n, p}) \geq (1-\delta_r + \varepsilon)^2 L_p\}\right) \nonumber \\ 
& \leq e^{-n^{(1+o(1)) (2\delta_r - \delta_r^2 + O(\varepsilon))}} +  \P\left(\mathrm{LT}_r(\bm \delta) \bigcap \{d_{(K)}(\cG_{n, p}) \geq (1-\delta_r + \varepsilon)^2 L_p\} \bigcap \{d_{(1)}(\cG_{n, p})\leq L_p\}\right) \nonumber \\ 
&=e^{-n^{(1+o(1)) (2\delta_r - \delta_r^2 + O(\varepsilon))}},   
\end{align*}
where the last step  uses
$$\P\left(\mathrm{LT}_r(\bm \delta) \bigcap \{d_{(K)}(\cG_{n, p}) \geq (1-\delta_r + \varepsilon)^2 L_p\} \bigcap \{d_{(1)}(\cG_{n, p})\leq L_p\}\right)=0,$$
by Lemma \ref{lm:r_I}. This completes the proof of the upper bound for the lower tail probability by choosing $\e$ arbitrary small.

\subsection{Proof of Lemma \ref{lm:r_I}}  
\label{sec:pfr_I} 

Let $G \in \sG_n$ and arrange the vertices of $G$ in the decreasing order of the degrees,  labeled as $v_1,  v_2, \ldots, v_n$. Recall that the neighborhood of the vertex $v_i$ in the graph $G$ is denoted as $N_G(v_i)$. 
\begin{lem}\label{lm:overlap_I} There exists a constant $C > 0$ such that for all $0< \varepsilon_1<  \frac{1}{4}$ the following holds. Let $K$ be such that there exists a vertex $v \in V_K:=\{v_1, v_2, \ldots, v_K\}$, with
$$|\{u \in V_K: |N_{G}(v) \cap N_{G}(u)|  > \varepsilon_1 L_p\}| \geq \tfrac{4}{\varepsilon_1^4}.$$
Then on the event $\{G \in \sG_n : d_{(1)}(G) \leq L_p \}$, for $n$ large enough and $\varepsilon_1$ small enough, $\lambda_1(G) \geq 2 \sqrt{L_p}$. 
\end{lem}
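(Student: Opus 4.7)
My plan is to exhibit an explicit bipartite subgraph of $G$ whose spectral norm is already $\geq 2\sqrt{L_p}$ and then transfer this lower bound to $G$ itself via Perron-Frobenius monotonicity.

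First I would let $S := \{u \in V_K : |N_G(v) \cap N_G(u)| > \varepsilon_1 L_p\}$, so that $|S| \geq 4/\varepsilon_1^4$ by hypothesis. Next I form the bipartite subgraph $H$ of $G$ with parts $S$ and $N_G(v)$, keeping every $G$-edge between these two sets. For each $u \in S$, the common neighbors of $u$ and $v$ lie in $N_G(v)$ and are endpoints of at least $\varepsilon_1 L_p$ distinct $G$-edges out of $u$; summing over $u \in S$ yields $|E(H)| \geq |S|\,\varepsilon_1 L_p$. On the event $\{d_{(1)}(G)\leq L_p\}$, one also has $|N_G(v)| \leq L_p$.

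Now I would apply the standard Rayleigh quotient bound for bipartite graphs: writing the adjacency matrix of $H$ in block form as $\begin{pmatrix} 0 & M \\ M^\top & 0 \end{pmatrix}$ and testing its quadratic form against the concatenation of the normalized indicator vectors of $S$ and $N_G(v)$ gives
\[
\lambda_1(H) \;\geq\; \frac{|E(H)|}{\sqrt{|S|\,|N_G(v)|}} \;\geq\; \frac{|S|\,\varepsilon_1 L_p}{\sqrt{|S|\,L_p}} \;=\; \sqrt{|S|}\,\varepsilon_1\sqrt{L_p} \;\geq\; \frac{2}{\varepsilon_1}\sqrt{L_p} \;\geq\; 2\sqrt{L_p},
\]
where the penultimate step uses $|S| \geq 4/\varepsilon_1^4$ and the last uses $\varepsilon_1 \leq 1$. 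Finally, since $A(H) \leq A(G)$ entrywise and both matrices are symmetric and non-negative, Perron-Frobenius (monotonicity of the spectral radius under entrywise domination for non-negative symmetric matrices) gives $\lambda_1(G) \geq \lambda_1(H) \geq 2\sqrt{L_p}$.

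I do not expect a serious obstacle; the only point needing a moment of care is the possibility that $v \in S$, which at worst drops one vertex from $S$ in the computation above, something the $4/\varepsilon_1^4$ cushion absorbs effortlessly once $\varepsilon_1$ is small. The clauses ``$\varepsilon_1$ small enough'' and ``$n$ large enough'' are needed only for this cushion and to place us in the asymptotic regime where $L_p \to \infty$ so that bounds like $|N_G(v)| \leq L_p$ are quantitatively meaningful. The real content is the observation that the cap $d_{(1)}(G) \leq L_p$ forces one side of the bipartite graph to have size at most $L_p$, converting the linear-in-$L_p$ edge lower bound into a $\sqrt{L_p}$-sized spectral lower bound.
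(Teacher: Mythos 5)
Your overall strategy --- test a Rayleigh quotient against indicators of $S$ and $N_G(v)$ and use the cap $d_{(1)}(G)\le L_p$ to control $|N_G(v)|$ --- is essentially the same as the paper's; the paper's own test vector assigns weight $\sqrt{L_p}$ to $v$, weight $\varepsilon_1^2\sqrt{L_p}$ to the rest of $\widetilde V_K$, and weight $1$ to $N_G(v)\setminus\widetilde V_K$, but this is only a cosmetic difference. The genuine gap in your write-up is the transfer step via Perron--Frobenius. You build a bipartite graph $H$ with parts $S$ and $N_G(v)$ and then claim $A(H)\le A(G)$ entrywise; but the two sets are \emph{not} disjoint in general, and nothing in the hypotheses prevents $S\setminus\{v\}\subseteq N_G(v)$. (Note $|S|\le K$ is a fixed constant while $|N_G(v)|$ can be $\Theta(L_p)$, so the overlap can be essentially all of $S$; since a vertex is not its own neighbor, $v\in S$ but $v\notin N_G(v)$, so your flagged concern about ``$v\in S$'' does not capture the real issue.) When $S\cap N_G(v)\ne\emptyset$, the $(|S|+|N_G(v)|)$-dimensional bipartite matrix $\bigl(\begin{smallmatrix}0 & M\\ M^\top & 0\end{smallmatrix}\bigr)$ has duplicated vertices and is neither a principal submatrix of $A(G)$ nor entrywise dominated by any reindexing of $A(G)$, so the monotonicity argument does not apply as stated.

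The fix is immediate and preserves your computation: skip $H$ and compute the Rayleigh quotient on $G$ directly with $\phi = |S|^{-1/2}\mathbf 1_S + |N_G(v)|^{-1/2}\mathbf 1_{N_G(v)}$. The cross term of $\langle\phi,A(G)\phi\rangle$ is exactly $2\sum_{u\in S}|N_G(u)\cap N_G(v)|/\sqrt{|S||N_G(v)|}\ge 2\sqrt{|S|}\,\varepsilon_1 L_p/\sqrt{|N_G(v)|}\ge 2\sqrt{|S|}\,\varepsilon_1\sqrt{L_p}$, which is the numerator you wanted; the overlap enters only through $\|\phi\|_2^2 = 2 + 2|S\cap N_G(v)|/\sqrt{|S||N_G(v)|}\le 2 + 2\sqrt{|S|/|N_G(v)|}$. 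Since $|S|\le K$ is fixed and (because $|S|\ge 2$ forces $d_G(v)>\varepsilon_1 L_p$) $|N_G(v)|\ge\varepsilon_1 L_p\to\infty$, this extra term is $o(1)$, and your chain of inequalities goes through with the ``$n$ large enough'' clause doing the work of absorbing it. The edge double-counting inside $S\cap N_G(v)$ that your ``count once from each side'' introduces only helps, since it can only increase the numerator. With this one repair the proposal is correct.
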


\begin{proof}The proof proceeds by constructing a test vector $\phi$ and lower bounding the Rayleigh quotient $\langle \phi, A(G) \phi \rangle/ \|\phi\|^2_2,$  where $\langle x,y \rangle$ denotes the inner product of vectors $x$ and $y$. 

Denote the set of vertices $\widetilde{V}_K:=\{u \in V_K: |N_{G}(v) \cap N_{G}(u)|  > \varepsilon_1 L_p\}$. Then define a function $\phi: V(G) \rightarrow \R$ as follows: 
$$
\phi(x)=
\left\{
\begin{array}{cc}
\sqrt{L_p}  & \text{ if }  x=v  \\
\varepsilon_1^2 \sqrt{L_p}  &   \text{ if }  x \in \widetilde{V}_K\backslash\{v\}    \\
1  &   \text{ if }  x \in N_{G}(v)\backslash \widetilde{V}_K  \\
0 & \text{ otherwise}.
\end{array}
\right.
$$ 
Note that 
\begin{align}\label{eq:phi}
||\phi||_2^2 =\sum_{v \in V(G)} \phi(v)^2 \leq L_p + |\widetilde{V}_K| \varepsilon_1^4 L_p + d_{G}(v) \leq L_p (2+ |\widetilde{V}_K| \varepsilon_1^4),
\end{align} 
where the last step uses the fact $d_{G}(v) \leq L_p$ on the given event. On the other hand,
\begin{align}\label{eq:overlap_I}
\langle \phi, A(G) \phi \rangle  \geq  2 \sum_{x \in \widetilde{V}_K\backslash\{v\}}  \sum_{y \in N_{G}(v)\backslash \widetilde{V}_K} a_{xy} \phi(x) \phi(y) & =   2 \varepsilon_1^2 \sqrt{L_p} \sum_{x \in \widetilde{V}_K\backslash\{v\}}  \sum_{y \in N_{G}(v)\backslash \widetilde{V}_K} a_{xy} \nonumber \\ 
& \geq  2(|\widetilde{V}_K|-1) \varepsilon_1^2 \sqrt{L_p} (\varepsilon_1 L_p- K) \nonumber \\ 
& = 2(|\widetilde{V}_K|-1) \varepsilon_1^3 L_p^{3/2} - O_{K, \varepsilon_1}(\sqrt{L_p}) \nonumber \\ 
& \geq |\widetilde{V}_K|  \varepsilon_1^3 L_p^{3/2},
\end{align}
for any small $\varepsilon_1$ and $n$ large enough depending on $\e_1.$ Then using \eqref{eq:phi} and \eqref{eq:overlap_I} gives, 
\begin{align}
\lambda_1(G) \geq \frac{\langle \phi, A(G) \phi \rangle}{ ||\phi||_2^2} \geq  \frac{ |\widetilde{V}_K|  \varepsilon_1^3 L_p^{3/2} }{L_p (2+ |\widetilde{V}_K| \varepsilon_1^4)} \geq \frac{ |\widetilde{V}_K|  \varepsilon_1^3 \sqrt{L_p}}{2 |\widetilde{V}_K| \varepsilon_1^4} \geq 2 \sqrt{L_p}, \nonumber   
\end{align}
where the third inequality uses $|\widetilde{V}_K| \geq 2/\varepsilon_1^4$ and the last inequality holds since $\varepsilon_1 \le \frac{1}{4}$ in the hypothesis of the lemma. 
\end{proof}

For the remainder of the proof, we will assume that the events $d_{(K)}(G) \geq (1-\delta_r + \varepsilon)^2 L_p$ and $d_{(1)}(G)\leq L_p$ hold.  The next lemma says if $K$ is large enough, either $\lambda_1(G)$ is large or the neighborhoods of many high degree vertices have small mutual overlap. 
\begin{lem} Fix any $L \geq 1$ and $\varepsilon_1 >0$, with $K \geq (4/\varepsilon_1^4+1)L$. Then, either  $\lambda_1(G) \geq 2 \sqrt{L_p}$ or there exists a set of vertices $ V'_L:=\{v_{j_1}, v_{j_2}, \ldots, v_{j_L} \} \subseteq V_K$, such that $$N_{G}(v_{j_a}) \cap N_{G}(v_{j_b}) \leq \varepsilon_1 L_p,\quad \text { for } 1 \leq a < b \leq L.$$
\end{lem}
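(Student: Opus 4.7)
The plan is to argue by a greedy selection followed by a pigeonhole reduction to Lemma \ref{lm:overlap_I}. Specifically, I would attempt to build $V'_L$ iteratively: at step $s = 1, 2, \ldots, L$, pick any $v_{j_s} \in V_K \setminus \{v_{j_1}, \ldots, v_{j_{s-1}}\}$ satisfying $|N_G(v_{j_s}) \cap N_G(v_{j_i})| \leq \varepsilon_1 L_p$ for every $i < s$. If this greedy procedure succeeds all the way through step $L$, we are done. Otherwise, it fails at some first step $s$, where necessarily $s \geq 2$ (since at $s = 1$ no prior overlap constraint exists and $V_K$ is nonempty).

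Failure at step $s$ means every one of the at least $K - (s-1)$ remaining vertices $u \in V_K \setminus \{v_{j_1}, \ldots, v_{j_{s-1}}\}$ satisfies $|N_G(u) \cap N_G(v_{j_i})| > \varepsilon_1 L_p$ for at least one $i \in \{1, \ldots, s-1\}$. Hence, by pigeonhole, there exists an index $i$ for which the set $\{u \in V_K : |N_G(v_{j_i}) \cap N_G(u)| > \varepsilon_1 L_p\}$ has cardinality at least $\lceil (K-s+1)/(s-1) \rceil$. Using $s - 1 \leq L - 1$ and the hypothesis $K \geq (4/\varepsilon_1^4 + 1) L$, one checks
\[
\frac{K - s + 1}{s - 1} \;\geq\; \frac{(4/\varepsilon_1^4 + 1) L - (s-1)}{s-1} \;\geq\; \frac{4}{\varepsilon_1^4},
\]
since $(4/\varepsilon_1^4 + 1)(s-1) \leq (4/\varepsilon_1^4 + 1)(L-1) < K$. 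So the vertex $v = v_{j_i} \in V_K$ meets the hypothesis of Lemma \ref{lm:overlap_I}, which, under the ambient assumption $d_{(1)}(G) \leq L_p$ already in force in the proof of Lemma \ref{lm:r_I}, yields $\lambda_1(G) \geq 2\sqrt{L_p}$. This establishes the stated dichotomy.

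The proof is essentially a single pigeonhole argument, so there is no real obstacle; the only point demanding some care is verifying that the specific constant $(4/\varepsilon_1^4 + 1) L$ in the lower bound on $K$ is exactly the threshold making the counting feed correctly into the hypothesis of Lemma \ref{lm:overlap_I} uniformly over all possible failure steps $s \leq L$.
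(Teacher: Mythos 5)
Your proof is correct and is essentially the same argument as the paper's, merely presented in a more elementary form. The paper constructs the auxiliary ``overlap'' graph $H$ on $\{1,\ldots,K\}$ with an edge between $i,j$ whenever $|N_G(v_i)\cap N_G(v_j)| \geq \varepsilon_1 L_p$, and then dichotomizes on the maximum degree of $H$: if it exceeds $4/\varepsilon_1^4$ then Lemma \ref{lm:overlap_I} applies, and otherwise the standard bound that a graph of maximum degree $\Delta$ on $K$ vertices has an independent set of size at least $K/(\Delta+1) \geq L$ finishes the proof. Your greedy construction is precisely the textbook proof of that independence-number bound, and the pigeonhole step upon failure is the contrapositive identification of a high-degree vertex in $H$. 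One small point worth remembering to state explicitly when you invoke Lemma \ref{lm:overlap_I} is that the cardinality you bound from below is that of a subset of $\{u \in V_K : |N_G(v_{j_i}) \cap N_G(u)| > \varepsilon_1 L_p\}$ (namely its intersection with the as-yet-unchosen vertices), so the lower bound passes to the full set; you implicitly use this but it deserves a sentence. Otherwise the argument and the constant-tracking are both fine.
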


\begin{proof} 
The argument proceeds by first constructing  an auxiliary graph $H=(V(H), E(H))$, with $V(H)=\{1, 2, \ldots, K\}$ and an edge between $(i, j)$, for $1 \leq i < j \leq K$, if $$|N_{G}(v_i) \cap N_{G}(v_j)| \geq \varepsilon_1 L_p.$$ 
Thus the lemma will be proved once we exhibit that that $H$ admits an independent set of size at least $L.$
Now, it suffices to assume that the maximum degree of the graph $H$ is $4/\varepsilon_1^4$ (since, otherwise, by Lemma \ref{lm:overlap_I},   $\lambda_1(G) \geq 2 \sqrt{L_p}$). Hence, the graph $H$ has an independent of size at least $\frac{K}{4/\varepsilon_1^4+1} \geq L$, for $\varepsilon_1$ small enough and we are done. 
\end{proof}

Now, choose $\varepsilon_2 = \varepsilon^2$ where $\e$ appears in the assumption on $d_K(G),$ and construct the graph $F=(V(F), E(F))$ as follows: $V(F)=\{j_1, j_2, \ldots, j_L\}$ and there is an edge between $(j_a, j_b)$ if\footnote{Given a graph $G=(V(G), E(G))$ and disjoint subsets $S, T \subset V(G)$, $E(S, T)$ denotes the set of edges with one endpoint $S$ and the other end point in $T$.}
\begin{align}\label{eq:edge_ab}
\left|E\left(N_{G}(v_{j_a})\backslash \left(N_{G}(v_{j_a}) \bigcap N_{G}(v_{j_b}) \right), N_{G}(v_{j_b}) \backslash \left(N_{G}(v_{j_a}) \bigcap N_{G}(v_{j_b}) \right)\right) \right | \leq  \varepsilon_2 L_p^{3/2}.
\end{align}
(In other words, there is an edge between  $(j_a, j_b)$ if the number of edges between the non-overlapping neighborhoods of $v_{j_a}$ and $v_{j_b}$ is at most $\varepsilon_2 L_p^{3/2}$.) Choose $L = 4^{r'}$, where $r'=1/\varepsilon_2^2$.
Then by Ramsey's theorem \cite{conlon_fox_sudakov}, one of the following holds: (1) there is an independent set in $F$ of size at least $r'$, or (2) there is a clique in $F$ of size at least $r'$. By choosing a careful dependence across the different parameters we will show Case 1 implies $\lambda_1(G)\ge 2\sqrt{L_p}$ while Case 2 implies $\lambda_2(G)\ge (1-\delta_r+ \varepsilon/2)\sqrt{L_p}.$ The arguments relies on the variational characterization of the spectrum and bounding the appropriate Rayleigh quotients. \\

\noindent
{\it Case } 1: In this case $F$ has an independent set of size $r'$.  Without loss of generality assume that the vertices $\sX:=\{v_{j_1}, v_{j_2}, \ldots, v_{j_{r'}}\}$ form an independent set.  Now, let $H$ be the subgraph of $G$ obtained by deleting all edges among the vertices $V'_L$ and all edges with one endpoint in $V_K$ and the other endpoint in  
$$\bigcup_{1\leq a \ne b \leq L}N_{G}(v_{j_a}) \bigcap N_{G}(v_{j_b}).$$ Thus by construction in $H,$ the set of vertices $V'_L$ is an independent set, and the neighborhoods $\{N_{H}(u): u \in V'_L\}$ are mutually disjoint. Then define a function $\phi: V(G) \rightarrow \R$ as follows: 
$$
\phi(x)=
\left\{
\begin{array}{cc}
\sqrt{L_p}  & \text{ if }  x \in \sX  \\
1  &   \text{ if }  x \in \bigcup_{u \in \sX} N_{H}(u)  \\
0 & \text{ otherwise}.
\end{array}
\right.
$$ 
Note that on the event $\{d_{(1)}(G) \leq L_p\}$, 
\begin{align}\label{eq:phi_II}
||\phi||_2^2 =\sum_{v \in V(G)} \phi(v)^2 \leq L_p r' + \sum_{v\in \sX} d_{G}(v) \leq 2 r' L_p. 
\end{align} 
Also, for $1 \leq a < b \leq r'$, there are at least $\varepsilon_2 L_p^{3/2} $ edges between $N_{H}(v_{j_a})$ and $N_{H}(v_{j_b})$. This implies, 
\begin{align}\label{eq:overlap_II}
\langle \phi, A(H) \phi \rangle   \geq  2 {r' \choose 2} \varepsilon_2 L_p^{3/2},
\end{align}
Using \eqref{eq:phi_II} and \eqref{eq:overlap_II} gives, 
\begin{align*}
\lambda_1(G) \geq \lambda_1(H)  \geq \frac{\langle \phi, A(H) \phi \rangle}{ ||\phi||_2^2}  \ge  \frac{r'}{4}\varepsilon_2 \sqrt{L_p} \gtrsim 2 \sqrt{L_p},    
\end{align*}
since $\frac{r'}{4}\varepsilon_2 =1/4\varepsilon_2 = 1/4\varepsilon^2 > 2$, since $\e \leq \frac{1}{4}$. \\

\noindent
{\it Case } 2:  In this case $F$ has a clique of size $r'$ and hence of any size $s\le r'$. Note that our choice of $\e$ and that $\e_2=\e^2$ implies that $r'=1/\e_2^2 \ge r.$ 
 Let $\sX:=\{v_{j_1}, v_{j_2}, \ldots, v_{j_{r}}\}$ be $r$ vertices which form a clique. Here, we will construct $r$-functions, which are linearly independent,  $\phi_1, \phi_2, \ldots, \phi_r: V(G) \rightarrow \R$, such that for $\phi=\sum_{a=1}^{r} \gamma_a \phi_a$, we will have $\frac{\langle \phi, A\phi \rangle}{ ||\phi||^2} \geq (1-\delta_r + \varepsilon/2) \sqrt{L_p}$, for all $\gamma_1, \gamma_2, \ldots, \gamma_{r} \in \R$. This would imply by standard variational characterization, that $\lambda_r(G) \ge (1-\delta_r + \varepsilon/2) \sqrt{L_p}$. 
For $1 \leq a \leq r$, define the function $\phi_a: V(G) \rightarrow \R$ as follows: 
\begin{align*}
\phi_a(x)=
\left\{
\begin{array}{cc}
\sqrt{d_{G}(v_{j_a})}  & \text{ if }  x = v_{j_a}  \\
1  &   \text{ if }  x \in N_{G}(v_{j_a}) \backslash \left( \bigcup_{u \in \sX\backslash \{v_{j_a}\}} N_{G}(u)  \right)  \\
0 & \text{ otherwise}.
\end{array}
\right.
\end{align*}
Note that the functions $\phi_1, \phi_2, \ldots, \phi_r$ have disjoint supports and hence are linearly independent. Furthermore, 
\begin{align}\label{eq:norm_phi} 
||\phi||_2^2 = \sum_{a=1}^r \gamma_a^2 ||\phi_a||_2^2 \leq  2 \sum_{a=1}^r  d_{G}(v_{j_a}) \gamma_a^2. 
\end{align}
However, for our purposes we also need the easy lower bound $||\phi||_2^2\ge (1-\delta_r + \varepsilon)^2 L_p \sum_{a=1}^r \gamma_a^2 $.  
where we use $d_{G}(v_{j_a}) \geq d_{(K)}(G) \geq  (1-\delta_r + \varepsilon)^2 L_p$.
Also observe that, for $1 \leq a \leq r$,   
\begin{align}\label{eq:degree_L}
\left|\bigcup_{u \in \sX \backslash \{v_{j_a}\}} \left(N_{G}(u) \bigcap N_{G}(v_{j_a}) \right) \right| \leq r  \varepsilon_1 L_p,
\end{align} 
since $N_{G}(v_{j_a}) \cap N_{G}(u) \leq \varepsilon_1 L_p$, for $ v_{j_a}, u \in V_L$. Using this we get,
\begin{align}
& \langle \phi, A(G) \phi \rangle  \nonumber \\ 
& = \sum_{a=1}^r \gamma_a^2 \langle \phi_a, A(G) \phi_a \rangle +  \sum_{1 \leq a\ne b \leq r} \gamma_a \gamma_b \langle \phi_a, A(G) \phi_b \rangle \nonumber \\ 
& \geq 2 \sum_{a=1}^r \gamma_a^2 \sqrt{d_{G}(v_{j_a})} (d_{G}(v_{j_a}) - r \varepsilon_1 L_p )+  \sum_{1 \leq a\ne b \leq r} \gamma_a \gamma_b \langle \phi_a, A(G) \phi_b \rangle \tag*{(by \eqref{eq:degree_L})} \nonumber \\ 
 & \geq 2 (1-\delta_r + \varepsilon) \sqrt{L_p} \sum_{a=1}^r \gamma_a^2 (d_{G}(v_{j_a}) - r \varepsilon_1 L_p)+  \sum_{1 \leq a\ne b \leq r} \gamma_a \gamma_b \langle \phi_a, A(G) \phi_b \rangle \nonumber \\
 \label{eq:phi_I}
 & \geq 2 (1-\delta_r + \varepsilon) \sqrt{L_p} \left(1- \frac{\e^2}{(1-\delta_r)^2}  \right)\sum_{a=1}^r \gamma_a^2 (d_{G}(v_{j_a}) )+  \sum_{1 \leq a\ne b \leq r} \gamma_a \gamma_b \langle \phi_a, A(G) \phi_b \rangle, 
\end{align}
where the last step uses $d_{G}(v_{j_a}) \geq d_{(K)}(G) \geq  (1-\delta_r + \varepsilon)^2 L_p \geq   (1-\delta_r )^2 L_p$,  and $r\e_1\le {\e^2}$.  Now, recalling \eqref{eq:edge_ab},
note that 
\begin{align*}
\left|\sum_{1 \leq a\ne b \leq r} \gamma_a \gamma_b \langle \phi_a, A(G) \phi_b \rangle \right| & \leq \sum_{1 \leq a\ne b \leq r} |\gamma_a| |\gamma_b| \langle \phi_a, A(G) \phi_b \rangle \nonumber \\
&  \leq  \left(\varepsilon_2  L_p^{3/2}  + L_p \right) \sum_{1 \leq a < b \leq r} 2 |\gamma_a| |\gamma_b|  \leq  r \left(\varepsilon_2  L_p^{3/2}  + L_p \right) \sum_{a=1}^r \gamma_a^2, \nonumber \end{align*}
{where the third inequality uses $2\gamma_a\gamma_b \leq \gamma_a^2 + \gamma_b^2$.}
The extra $L_p$ additive term above comes from the possible edge between $v_{j_a}$ and $v_{j_b}$ contributing $\sqrt{d_{G}(v_{j_a})d_{G}(v_{j_b})}\le L_p$. This implies, 
\begin{align}\label{eq:phi_Lp}
\frac{1}{{||\phi||_2^2} } \left|\sum_{1 \leq a\ne b \leq r} \gamma_a \gamma_b \langle \phi_a, A(G) \phi_b \rangle \right| & \leq \frac{1}{{||\phi||_2^2} } \sum_{1 \leq a\ne b \leq r} |\gamma_a| |\gamma_b| \langle \phi_a, A(G) \phi_b \rangle \nonumber \\ 
&\leq  \frac{r \left(\varepsilon_2  L_p^{3/2}  + L_p \right) \sum_{a=1}^r \gamma_a^2 }{ (1-\delta_r + \varepsilon)^2 L_p \sum_{a=1}^r \gamma_a^2 }  \nonumber \\  
&  \le \frac{r \left(\varepsilon_2  \sqrt{L_p}  + 1 \right)}{(1-\delta_r)^2} \nonumber \\ 
&=\frac{r \left(\e^2 \sqrt{L_p}  + 1 \right)}{(1-\delta_r)^2}  ,
\end{align}
where we use the previously stated lower bound on $\|\phi\|_2^2$ and that $\varepsilon_2 = \varepsilon^2.$  
{We now lower bound the contribution of the diagonal term in \eqref{eq:phi_I} using the upper bound on $\|\phi\|_2^2$ in \eqref{eq:norm_phi}: 
\begin{align*}
& \frac{2 (1-\delta_r + \varepsilon) \sqrt{L_p} \left(1- \frac{\e^2}{(1-\delta_r)^2}  \right)\sum_{a=1}^r \gamma_a^2 (d_{G}(v_{j_a}) )}{||\phi||_2^2} \nonumber \\ 
& \;\;\;\;\;\;\;\;\;\;\;\;\;\;\;\;\;\;\;\;\;\;\;\;\;\;\;\;\;\;\;\;\;\;\;\;\;\;\;\;\;\; \ge \frac{2 (1-\delta_r + \varepsilon) \sqrt{L_p} \left(1- \frac{\e^2}{(1-\delta_r)^2}  \right)\sum_{a=1}^r d_{G}(v_{j_a})  \gamma_a^2 }{2 \sum_{a=1}^r  d_{G}(v_{j_a}) \gamma_a^2}\\
&\;\;\;\;\;\;\;\;\;\;\;\;\;\;\;\;\;\;\;\;\;\;\;\;\;\;\;\;\;\;\;\;\;\;\;\;\;\;\;\;\;\; \ge (1-\delta_r + \varepsilon) \sqrt{L_p} \left(1- \frac{\e^2}{(1-\delta_r)^2}  \right) 
\end{align*}
} 
Thus, putting things together, for $n$ large enough, 
$$\frac{\langle \phi, A(G) \phi \rangle}{\|\phi\|_2^2}\ge (1-\delta_r + \varepsilon) \sqrt{L_p} \left(1- \frac{\e^2}{(1-\delta_r)^2}  \right) -\frac{r \left(\e^2 \sqrt{L_p}  + 1 \right)}{(1-\delta_r)^2}\ge (1-\delta_r + \varepsilon/2) \sqrt{L_p},$$
since,  $\frac{ (1-\delta_r + \varepsilon)  \e^2}{(1-\delta_r)^2} \leq \frac{ 2  \e^2}{(1-\delta_r)} \leq \frac{\varepsilon}{4}$ and $\frac{r \e^2 }{(1-\delta_r)^2} \leq \frac{\varepsilon}{8}$, by using $\e \le (1-\delta_r)^2/8 r \leq (1-\delta_r)/8$ and $\frac{r}{\sqrt{L_p}(1-\delta_r)^2} \leq \frac{\varepsilon}{8}$, by choosing $n$ large enough (because $L_p \rightarrow \infty$ in the regime \eqref{eq:rangep}).  This completes the proof of the lemma.

\begin{remark}\label{remark:lt_marginal}
{\em Note that in the proof above (recall Lemma \ref{lm:r_I}) we have crucially used the fact that the lower tail event $\mathrm{LT}_r(\bm \delta)$ includes the marginal lower tail event for $\lambda_1(\cG_{n,p})$. Here, unlike in the case of the upper tail (recall Remark \ref{remark:ut_marginal}), analyzing the marginal lower tail behavior of the edge eigenvalues, other than the largest one, seems delicate. For concreteness let us only consider $\lambda_{2}(\cG_{n,p})$. While a marginal upper tail event of $\lambda_2(\cG_{n,p})$ also forces a similar event of $\lambda_1(\cG_{n,p})$, such an implication is absent in the lower tail setting. The key difficulty in this case is caused by the lack of monotonicity, that is, even though $\lambda_1(\cG_{n,p})$ is a monotone function of the edges of the graph, $\lambda_2(\cG_{n,p})$ is not (for instance the second eigenvalue of the complete graph is negative). This rules out the crucial application of FKG inequality that our current proofs rely on, and leaves open the problem of marginal large deviation for the second and smaller edge eigenvalues in the lower tail. } 
\end{remark}

\section{Refinements and Open Problems for the Largest Eigenvalue}
\label{sec:lambda1}

In the results above we derived the large deviations of the edge eigenvalues of $\cG_{n, p}$, in the regime $p \ll  \sqrt{\log n/\log \log n}/n$. It is now natural to wonder what happens when $p \gg \sqrt{\log n/\log \log n}/n$. Note that, as alluded to in \eqref{eq:lambda1_N}, the typical value of $\lambda_1(\cG_{n,p})$ and the corresponding eigenvector undergoes a transition at  $\sqrt{\log n/\log \log n}/n$: below the threshold the typical value of $\lambda_1(\cG_{n,p})$ is  asymptotically $\sqrt{L_p}$, which is governed by the maximum  degree of the graph, and corresponding eigenvector is localized; whereas above this threshold the typical value is asymptotically $np$, which is dictated by the total number of edges in the graph, and the corresponding eigenvector is completely delocalized. As a consequence, the large deviations behavior of $\lambda_1(\cG_{n,p})$ above the threshold is often very different.  In the following we summarize the existing results,  refine a result of Cook and Dembo \cite{CD18} regarding the lower tail of $\lambda_1(\cG_{n,p})$, and mention various open problems in this regime.

\subsection{Lower Tail for the Largest Eigenvalue}

Here, we discuss what happens to the lower tail probability $\lambda_1(\cG_{n, p})$, when $p \gg \sqrt{\log n/\log \log n}/n$. In this direction, the results of Lubetzky and Zhao \cite[Proposition 3.9]{LZ-dense} in the dense case, and, more generally, Cook and Dembo \cite[Theorem 1.21]{CD18}, give the precise asymptotics of the logarithm of the lower tail probability for $\lambda_1(\cG_{n, p})$, for $p \gg \log n/n$. In this regime, the problem exhibits replica symmetry, that is, the structure of the random graph conditioned on the lower tail event is approximately like another Erd\H os-R\'enyi random with a smaller edge density. }Therefore, given that the results in the previous sections cover the regime $p \ll \sqrt{\log n/\log \log n}/n$, the question that remains is, what happens to the lower tail of $\lambda_1(\cG_{n, p})$, in the intermediate regime $\sqrt{\log n/\log \log n}/n \ll p \lesssim \log n/ n$?  In the proposition below we show that this replica symmetric behavior, in fact, extends to all way down to $p \gg \sqrt{\log n/\log \log n}/n$.

\begin{ppn}\label{ppn:lt_lambda1} 
For $\sqrt{\log n/\log \log n}/n \ll p \leq \frac{1}{2}$ and $0 < q < p $ (such that $s:=q/p \in (0, 1)$ is fixed), 
\begin{align}\label{eq:lambda1_lt}
\P(\lambda_1(\cG_{n, p})  \leq q(n-1))=e^{-(1+o(1)) {n \choose 2} I_p(q)},
\end{align}
where $I_p(x) := x \log\frac{x}{p}+ (1-x) \log\frac{1-x}{1-p}$. 
\end{ppn}

The proof of this result is given in Appendix \ref{sec:largest_lt_pf}. The upper bound, which holds for all $0 < p < 1$, follows from  \cite[Theorem 1.18]{CD18}. The result in \cite[Theorem 1.18]{CD18} also gives the matching lower bound for $\log n/n \ll p \leq \frac{1}{2}$. In Appendix \ref{sec:largest_lt_pf}, we show that the proof of their lower bound, which proceeds by a measure titling argument, extends to $p \gg \sqrt{\log n/\log \log n}/n$ as well, because the typical value of $\lambda_1(\cG_{n, p})$ is asymptotically $np$, whenever $p \gg \sqrt{\log n/\log \log n}/n$. Note that this result together with Theorem \ref{thm:lt}, which covers the regime $p \ll \sqrt{\log n/\log \log n}/n$, which resolves the lower tail large deviation problem for the largest eigenvalue both above and below the threshold. 

\subsection{Upper Tail for the Largest Eigenvalue}

We end with a discussion on the upper tail of  $\lambda_1(\cG_{n, p})$, when $p \gg \sqrt{\log n/\log \log n}/n$. 
This problem in the regime $p \gg 1/\sqrt n$ was treated recently in \cite{uppertail_eigenvalue}, where arguments proceeded by observing that atypically large values of $\lambda_1(\cG_{n,p})$ forces atypically large cycle homomorphism counts.  The proof then relied on the recent progress in understanding mean field behavior of such observables, particularly the results in \cite{BGLZ,CD16}, which yielded 
\begin{align}\label{eq:ut_sqrtn}
-\log \P(\lambda_1(\cG_{n, p})\ge (1+\delta) )=(1+o(1)) \min \left\{\frac{(1+\delta)^2}{2}, \delta(1+\delta)\right\} n^{2}p^{2}\log (1/p).
\end{align}
Here, unlike in Theorem \ref{thm:1ut}, the atypical constructions corresponding to the two terms in \eqref{eq:ut_sqrtn} above are obtained by planting a clique (of size $O(np)$) or an anti-clique (of size $O(np^2)$) in the random graph.\footnote{Given a graph $G=(V(G), E(G))$, a set $S \subseteq V(G)$ is said to be an anti-clique in $G$ if every vertex in $S$ is connected to every other vertex in $G$.} Using the above result and observing that the anti-clique construction ceases to be viable for $p\ll 1/\sqrt n,$ one can speculate that even for $\sqrt{\log n/\log \log n}/n \ll p \ll 1/\sqrt n,$ 
\begin{equation}\label{predic1}
-\log \P(\lambda_1(\cG_{n, p})\ge (1+\delta)np )=(1+o(1)) \frac{(1+\delta)^2}{2} n^{2}p^{2}\log (1/p).
\end{equation}
 On the other hand, Theorem \ref{thm:1ut} suggests an alternate strategy of planting a vertex of degree $(1+\delta)^2 n^2p^2,$ to get the required increase in the spectral norm.
It is plausible that the above two predictions govern $-\log \P(\lambda_1(\cG_{n, p})\ge (1+\delta)np)$ in the entire regime $\sqrt{\log n/\log \log n}/n \ll p \ll 1/\sqrt n$, and addressing this formally will be taken up in a future project. 

\small 
\subsection*{Acknowledgements} The authors thank Anirban Basak and Amir Dembo for several useful discussions. SG is partially supported by NSF grant DMS-1855688, NSF CAREER Award DMS-1945172 and a Sloan Research Fellowship.

\normalsize

\bibliographystyle{plain}
\bibliography{ldp_ref}

\appendix

\section{Proofs of Technical Lemmas}

\subsection{Proof of Lemma \ref{equivalence}} 
\label{sec:sqrtLp_pf} 

Recall from \eqref{eq:Lp} that $L_p=\frac{\log n}{\log \log n -\log (np)}$.  Let $I_1$ be the set of $p$ such that $n p \ll \sqrt{\log n/\log \log n}$ and $I_2$ be the set of $p$ such that $np \ll \sqrt{L_p}$. 

To begin with, we show $I_1 \subset I_2$. To this end, first assume $np > 1/\log n.$ Then, if $p \in I_1,$ then noting, $-\log\log n<\log (np)<\frac{1}{2} \log\log n$, it follows that $\sqrt{L_p}=\Theta(\sqrt{\log n/\log \log n})$, and, hence, $p\in I_2.$ Now, suppose $np\le 1/\log n$. In this case, $L_p = \Theta((\log n/\log(1/np)).$ Moreover, notice that since $np\le 1/\log n,$ for all large enough $n$, $$np\sqrt{\log(1/np)}\le 1\ll \sqrt{\log n} \implies np\ll \sqrt{L_p},$$ 
which shows $p \in I_2$. 

Next, we show $I_2 \subset I_1$, when $np\ll \log n$. To this end, suppose $p \in I_2$. Then, by the fact $np\ll \log n$, $\log\log(n)-\log(np)=\omega(1)$ and, hence, $L_p=o(\log n)$. Thus,  we have the following implications:
\begin{align*}
p\in I_2 &\implies np \ll \sqrt{\log n}
\implies \log(np)\le \frac{1}{2} \log\log n 
\implies L_p \lesssim \log n/\log \log n
\implies p\in I_1.
\end{align*}

Finally, we show that $np \ll \log n$ implies $np \ll L_p$. To this end, assume $np =c\log n$, where $c=o(1)$. Then to show $np \ll L_p$, it is enough to prove $c\log n\ll \frac{\log n}{\log (1/c)}$. This follows by observing that $c \log (1/c)\ll 1$, since $c=o(1).$

\subsection{Proof of Lemma \ref{lem:degreep}}
\label{sec:degreep_pf}

To begin with suppose $\log n \leq C \log (1/np)$, for some constant $C > 0$. Then, for $s \geq C+1$,  
$$n \binom{n-1}{s}p^s(1-p)^{n-s} \leq n n^s p^s = e^{\log n - s \log(1/np)} \leq e^{-\frac{1}{C} \log n} \ll 1.$$
Then recalling the definition in \eqref{eq:degreep} it follows that $\Delta_p \leq C $.  This completes the proof of part (a). 

For part (b) choose $\varepsilon_n=1/\sqrt{\log (\log n /np)}$. 
Then for any $s \geq (1+\varepsilon_n) L_p$, $$ s\log s \geq  (1+\varepsilon_n) L_p\log((1+\varepsilon_n) L_p) = L_p\log L_p +\varepsilon_n L_p\log L_p + O(L_p),$$ and $s \log (1/np) \geq  L_p\log(1/np) +\varepsilon_n L_p\log(1/np)$. This gives, using the fact $L_p \gg 1$ (which follows from the assumption $\log n \gg \log(1/np)$) and $p \ll \frac{\log n }{n}$, 
\begin{align}
n \binom{n-1}{s}p^s(1-p)^{n-s} \leq n \frac{n^s e^s}{s^s} p^s & = e^{\log n - s \log(1/np) - s \log s  +s } \nonumber \\ 
& \leq e^{\log n - L_p\log (L_p/np) - \varepsilon_n L_p\log (L_p/np)   + O(L_p) } \nonumber \\ 
\label{eq:deltap_I} & = e^{ L_p\log (\log( \log n/np)) - \varepsilon_n L_p\log (L_p/np)  + O(L_p) } \\ 
& \leq e^{ 2 L_p\log (\log (\log n/np) ) - \varepsilon_n L_p\log (\log n /np)  + O(L_p) }  \nonumber \\ 
& \leq e^{ - \Theta\left( L_p \sqrt{\log (\log n /np)}  \right)  } \ll 1, \nonumber 
\end{align}
where in the first step we use the bound $\binom{n}{s} \le \frac{n^s e^s}{s^s}$, and  \eqref{eq:deltap_I} uses $$L_p\log (L_p)-\frac{\log n \log \log n}{\log \log n+\log(1/np)} = -\frac{\log n \log (\log \log n+ \log(1/np))}{\log \log n+\log(1/np)}=-L_p \log (\log (\log n/np)),$$ and the observation that  $\log n - \frac{\log n \log \log n}{\log \log n + \log (1/np)} -L_p \log(1/np) =0 $. This shows, 
\begin{align}\label{eq:deltaub}
\Delta_p \leq  (1+\varepsilon_n) L_p. 
\end{align} 
For the lower bound, suppose $s \leq (1-\varepsilon_n) L_p$ and choose $n$ large enough such that $(1-p)^{n-s} \geq (1-p)^n \geq e^{-O(np)} \ge e^{-O(L_p)}$, where the last inequality uses Lemma \ref{equivalence}. Thus, similarly as above,  
\begin{align*}
n \binom{n-1}{s}p^s(1-p)^{n-s} \geq  \tfrac{1}{2} n \frac{n^s}{s^s} p^s & = e^{\log n - s \log(1/np) - s \log s   -O(L_p)} \nonumber \\ 
& \geq \tfrac{1}{2} e^{\log n - L_p\log (L_p/np) + \varepsilon_n L_p\log (L_p/np)    -O(L_p) }\nonumber \\ 
& =  e^{ L_p\log (\log( \log n/np)) + \varepsilon_n L_p\log (L_p/np)  - O(L_p) } \\ 
& \geq e^{ \tfrac{1}{2} L_p\log (\log (\log n/np) ) + \varepsilon_n L_p\log (\log n /np)  + O(L_p) }  \nonumber \\ 
& \geq e^{  \Theta\left( L_p \sqrt{\log (\log n /np)}  \right)  } \gg 1, \nonumber 
\end{align*}
This shows $\Delta_p \geq  (1 - \varepsilon_n) L_p$.  This, combined with \eqref{eq:deltaub}, completes the proof of part (b).

Note that for $1 \lesssim n p \ll  \log n$, the result in (c) follows from \cite[Proposition 1.13]{bbk}. Therefore, it suffices to assume that $ n p \ll 1$ and $\log n \gg \log (1/np)$.  The upper bound follows from the typical value $d_{(1)}(\cG_{n, p})$ \cite[Lemma 2.2]{KS} and  part (b) above, since, for $1 \leq a \leq r$,  
\begin{align}\label{eq:lambda_bound}
d_{(a)}(\cG_{n, p}) \leq d_{(1)}(\cG_{n, p}) = (1+o(1)) \Delta_p = (1+o(1)) L_p.
\end{align}
The lower bound follows from the proof of Proposition \ref{ppn:deglt}, which shows, for any $\varepsilon > 0$, $\P(d_{(a)}(\cG_{n, p}) \leq (1-\varepsilon) L_p) \rightarrow 0$, for $1 \leq a \leq r$.

As before, for $1 \lesssim n p \ll  \sqrt{L_p}$, the result in (d) follows from \cite[Proposition 1.9]{bbk}. Therefore by Lemma \ref{equivalence}, it suffices to assume that $ n p \ll 1$ and $\log n \gg \log (1/np)$.  The upper bound follows from typical value $ \lambda_1(\cG_{n, p})$ \cite[Theorem 1.1]{KS} and  part (b) above, 
\begin{align}\label{eq:lambda_bound}
\lambda_a(\cG_{n, p}) \leq \lambda_1(\cG_{n, p}) \leq (1+o(1)) \sqrt{\Delta_p} = (1+o(1)) \sqrt{L_p}.
\end{align}
For the lower bound, suppose  $G \sim  \cG_{n, p}$ be a realization of the random graph, and $\cT$ be the event that $G$ is a disjoint union of trees and each component of $G$ is of size at most $(1+o(1)) \Delta_p$. Also, let $\cD_r$ be the event that $d_{(a)}(G)=(1+o(1)) L_p$, for $1 \leq a \leq r$. Recall from \cite[Lemma 2.2]{KS} and part (c) above that $\P(\cT \cap \cD_r) \rightarrow 1$. Now, assume $G \in \cT \cap \cD_r$, and suppose $G_1, G_2, \ldots, G_\nu$ be the connected components of $G$. To begin with, suppose there is a $1 \leq b \leq \nu$ such that 2 vertices in $G_b$ have degree greater than  $(1+o(1)) \Delta_p$. Then the induced subgraph of $G$ on the set of vertices $\{u, v\} \bigcup N_{G_b}(u) \bigcup N_{G_b}(v)$ will have size greater than $(1+o(1)) 2 \Delta_p$, which is a contradiction. Therefore, each connected component of $G$ has at most one vertex of degree $(1+o(1)) \Delta_p= (1+o(1)) L_p$, which means, by part (c), the vertices 
$i_1, i_2, \ldots, i_r$ which attain the $r$ largest degrees of $G$ will belong to  different connected components of $G$. Assume, without loss of generality, these components are $G_1, G_2, \ldots, G_r$. This implies, by Lemma \ref{lem:gfact}(a), 
$$\lambda_1(G_a) \geq \sqrt{d_{(1)}(G_a)} = (1+o(1))\sqrt{L_p}, \quad \text{for} \quad 1 \leq a \leq r.$$ 
Since, $G_1, G_2, \ldots, G_r$ are disjoint, the multi-set $\{\lambda_1(G_1), \lambda_1(G_2), \ldots, \lambda_1(G_r)\}$ is a subset of the spectrum of $G$. This combined with the upper bound \eqref{eq:lambda_bound}, implies, for $1 \leq a \leq r$, $\lambda_a(\cG_{n, p})=(1+o(1)) \sqrt{L_p}$, with high probability. \hfill $\Box$ \\

\subsection{Proof of Lemma \ref{lem:largecycle}}
\label{sec:largecycle_pf}
Recall, from \eqref{eq:Lp}, $L_p = (1+o(1))(\log n /(\log\log n-\log(np)))$. To begin with consider the case $e^{-(\log \log n)^2} \leq np \leq \log^{1/4}n$. Assume there exists a set $T \subset V(\cG_{n, p})$ such that $|T|=t$ and $\min_{v \in T} d_v(\cG_{n, p}) \geq \log^{1/3}n / \log \log n$. If such a set $T$ exists, then either $|E(T, V(\cG_{n, p}) \setminus T)|$ or the $|E(\cG_{n, p}[T])|$ is at least $e_0 := (\log^{1/3}n / \log \log n)t/3$.\footnote{Given a graph $G=(V(G), E(G))$ and $S \subseteq V(G)$, $E(S, V(G) \backslash S)$  denotes the collection of edges across the cut $(S, V(G) \backslash S)$. Moreover, $G[S]$ denotes the induced subgraph $G$ with vertex set $S$.} Since 
$$|E(T,V(\cG_{n, p}) \setminus T)| \sim \dBin(t(n-t),p)  \quad \text{and} \quad |E(\cG_{n, p}[T])|\sim \dBin(t(t-1)/2,p),$$ the probability that they are at least $e_0$ is at most  $\binom{t(n-t)}{e_0}p^{e_0}$ and $\binom{t(t-1)/2}{e_0}p^{e_0}$, respectively. Plugging in the value of $e_0$, observe that both these quantities are upper bounded by	
\begin{equation*}
\left(\frac{3enp \log \log n}{\log^{1/3} n}\right)^{e_0} \leq \left(\frac{3e \log^{1/4}n \log \log n}{\log^{1/3} n}\right)^{\frac{\log^{1/3}n}{\log\log n}t}\leq e^{{-\Omega(t\log^{1/3}n)}}.
\end{equation*}
This shows, since $\log^{1/3}n / \log \log n \ll \log^{1/3}n /( \log \log n )^{2/3}   \lesssim  \Delta_p^{1/3} + np(1+1/\log\log n)$ by Lemma \cite[Lemma 2.1]{KS}, the probability that all vertices of a set of size $t$ has degree at least $\Delta_p^{1/3} + np(1+1/\log\log n)$ is at most $e^{{-\Omega(t\log^{1/3}n)}}$. Further, similar argument shows that conditioning on the presence of a fixed set of at most $2t$ edges in $\cG_{n, p}$, the desired probability is still at most $e^{{-\Omega(t\log^{1/3}n)}}$. Hence, the probability that there exists a cycle of length $s \geq L \geq \log^{5/6} n$ with at least $s/2$ vertices inside the set $X$ is at most
\begin{align}\label{eq:a1}
\sum_{s \geq L} n^s p^s \binom{s}{\lceil \frac{s}{2} \rceil} e^{-\Omega(\frac{s}{2}\log^{1/3}n)} \leq \sum_{s \geq L} (2np e^{-\Omega(\log^{1/3}n))})^s & \leq e^{-\Omega(L\log^{1/3}n)} \nonumber \\
& = e^{-\Omega(\log^{7/6} n)},
\end{align}
where LHS is obtained by fixing the $s$-vertices and $s$-edges of the cycle, and selecting a set $T$ of size $t= \lceil s/2 \rceil$ and considering the event that conditioned on the presence of the cycle, all vertices of $T$ belong to the set $X$. 

Now, consider $np \geq \log^{1/4}n$. Here, we estimate the probability that there exists a set of vertices $T$ with $|V|=t \leq n/2$ with $\min_{v \in V(T)} d_v(G) \geq np(1+1/\log\log n)$. Considering $|E(T, V(\cG_{n, p}) \setminus T)|$ and $|E(\cG_{n, p}[T])|$ as before, we obtain that such a set $T$  exists with probability at most $e^{-\Omega(tnp/(\log\log n)^2)}$. Also, conditioning on presence of $2t$ specific edges does not change the order of exponent. Hence, the probability that there is a cycle of length $s \geq L$ with $s/2$ edges in $X$ is at most
\begin{align}\label{eq:a2}
\sum_{s \geq L} n^s p^s \binom{s}{\lceil \frac{s}{2} \rceil} e^{-\Omega(\frac{s}{2} np/(\log\log n)^2)} &\leq \sum_{s \geq L} (2np e^{-\Omega(np/(\log\log n)^2))})^s \nonumber \\ &\leq e^{-\Omega(L\log^{1/4}n/(\log\log n)^2)}=  e^{-\Omega(\log^{17/16} n)}. 
\end{align}

Finally, following the proof of \cite[Lemma 2.3]{KS}, it is easy to see that $\P(\cA_2) = e^{-\Omega(\log^{\frac{17}{16}} n)}$. This combined \eqref{eq:a1} and \eqref{eq:a2}, shows that $\P(\cA_1\bigcup \cA_2) = e^{- \omega(\log n)}$, as required.

\section{Proof of Propostion \ref{ppn:lt_lambda1}} 
\label{sec:largest_lt_pf}

The upper bound holds for all $0 < q < p < 1/2$ by \cite[Theorem 1.18]{CD18}, 
$$\P(\lambda_1(\cG_{n, p})  \leq q(n-1)) \leq e^{- {n \choose 2} I_p(q)}.$$
The matching lower bound was also proved by \cite[Theorem 1.18]{CD18} for $\log n/n \ll p \leq \frac{1}{2}$. Here, we show that the same argument extends to $\sqrt{\log n/\log \log n} \leq n p \leq 
\log n$ with necessary modifications. The proof follows by a standard tilting argument. Fix an $\e>0$ small enough and let $q'=(1-\e)q.$ Denote by $\Q$ the measure on $\sG_n$ induced by $G(n, q')$ and $\mathrm{LT}(q)$ the event $\{G\in \sG_n: \lambda_1(G)  \leq q(n-1)\}$. We begin by defining the proxy event $\cE$ which implies $\mathrm{LT}(q)$: 
\begin{align} \label{proxy}
\cE & := 
\left\{ 
\begin{array}{cc}
\left\{\|A(\cG_{n,q'})-q'{J}_n\|\le K \sqrt{\frac{\log n}{\log\log n}}\right\}   &  \text{ for }   \sqrt{\log n/\log \log n} \ll nq' <  \log^{0.9}n,  \\
\left\{\|A(\cG_{n,q'})-q'{J}_n\|\le K \sqrt{\log^{1.1} n} \right\}   &  \text{ for }   \log^{0.9}n\le  nq'\le \log n, 
\end{array}
\right. 
\end{align}
where $J_n$ is the $n \times n$ matrix with zeros on the diagonal and 1 everywhere else. 

\begin{lem} $\Q(\cE)\ge \frac{1}{2}.$ 
\end{lem}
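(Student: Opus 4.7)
The plan is to deduce $\Q(\cE) \geq \tfrac{1}{2}$ by combining two high-probability inputs under $\Q$: the regularized spectral concentration estimate of Le--Levina--Vershynin (Theorem \ref{thmversh}), and the concentration of the maximum degree from Lemma \ref{lem:degreep}(c). Since $\E A(\cG_{n,q'}) = q'(J_n - I_n)$, one has $\|A(\cG_{n,q'}) - q'J_n\| \leq \|A(\cG_{n,q'}) - \E A(\cG_{n,q'})\| + q'$ with $q' \leq 1$, so it suffices to control $\|A(\cG_{n,q'}) - \E A(\cG_{n,q'})\|$ below the thresholds appearing in \eqref{proxy}.

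The first step is to apply Theorem \ref{thmversh} to $\cG_{n,q'}$ with the removed vertex subset taken to be empty (permitted, since $0 \leq 10/q'$), so that $\cG_{n,q'}^{-} = \cG_{n,q'}$; taking $b = 1$ in the theorem gives, with $\Q$-probability at least $1 - n^{-1}$,
\[
\|A(\cG_{n,q'}) - \E A(\cG_{n,q'})\| \;\leq\; K\bigl(\sqrt{nq'} + \sqrt{d_{(1)}(\cG_{n,q'})}\bigr).
\]
The next step is to feed in the degree bound: throughout the regime defining $\cE$, one has $\log n \gg \log(1/(nq'))$ and, away from a negligible sliver near $nq' = \log n$, $nq' \ll \log n$, so Lemma \ref{lem:degreep}(c) yields $d_{(1)}(\cG_{n,q'}) = (1+o(1)) L_{q'}$ under $\Q$ with probability $1 - o(1)$. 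Intersecting the two events produces, with $\Q$-probability at least $1 - n^{-1} - o(1) \geq 1/2$ for all large $n$,
\[
\|A(\cG_{n,q'}) - q'J_n\| \;\leq\; K\bigl(\sqrt{nq'} + \sqrt{(1+o(1))L_{q'}}\bigr) + 1.
\]

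It then remains to check that the right-hand side fits inside the relevant threshold in each subcase. For $\sqrt{\log n/\log\log n} \ll nq' < \log^{0.9}n$, one has $L_{q'} = \Theta(\log n/\log\log n)$ and $\sqrt{nq'} = o(\sqrt{L_{q'}})$ (since $\log^{0.9}n \ll \log n/\log\log n$), so the bound collapses to $O(\sqrt{\log n/\log\log n})$, fitting within the first threshold after possibly enlarging $K$. For $\log^{0.9}n \leq nq' \leq \log n$, direct computation of $L_{q'} = \log n/(\log\log n - \log(nq'))$ gives $L_{q'} \leq \log^{1.1} n$ throughout the stated range (possibly after trimming a negligible boundary sliver), while $\sqrt{nq'} \leq \sqrt{\log n}$, so the bound becomes $O(\sqrt{\log^{1.1} n})$, fitting the second threshold.

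I expect the main delicate point to be this last verification in the second subcase: when $nq'$ is close to $\log n$, Lemma \ref{lem:degreep}(c) no longer applies literally and the formula $L_{q'}$ grows rapidly, so one has to supplement the argument, for instance by invoking a Feige--Ofek-style sharp concentration bound valid for $nq' = \Theta(\log n)$ (which independently gives $\|A - \E A\| = O(\sqrt{\log n})$) to take over on the boundary. Once this bookkeeping is in place, the combination of the Vershynin concentration with the degree control yields $\Q(\cE) \geq 1/2$ for all large $n$, as required.
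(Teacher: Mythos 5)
Your treatment of the first subcase ($\sqrt{\log n/\log\log n} \ll nq' < \log^{0.9}n$) matches the paper exactly: apply Theorem \ref{thmversh} with no removed vertices, feed in $d_{(1)}(\cG_{n,q'}) = (1+o(1))L_{q'} = \Theta(\log n/\log\log n)$ from Lemma \ref{lem:degreep}(c).

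The gap is in the second subcase ($\log^{0.9}n \leq nq' \leq \log n$), and you have partially but not fully sensed it. Lemma \ref{lem:degreep}(c) requires $nq' \ll \log n$, which simply fails when $nq' = c\log n$ for any fixed $c \in (0,1]$, so you cannot invoke it across this entire window. Moreover, the claim that $L_{q'} \leq \log^{1.1}n$ "throughout the stated range (possibly after trimming a negligible boundary sliver)" is not a legitimate move: $q'$ is a single, fixed parameter, not an ensemble you can trim, and when $nq' = \log n(1 - \log^{-0.2}n)$ one gets $L_{q'} \approx \log^{1.2}n > \log^{1.1}n$, so the bound you need genuinely fails for admissible choices of $q'$. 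The proposed patch via a Feige--Ofek-style estimate is plausible in spirit, but as stated it is not a proof — Feige--Ofek requires $nq' \geq C\log n$ for a specific absolute constant, while the problematic window includes $nq' = c\log n$ for small $c < 1$, and you would need to show the two regimes (where $L_{q'}$ asymptotics apply and where the Feige--Ofek-type bound applies) actually overlap. The paper sidesteps all of this with a one-line monotone coupling: embed $\cG_{n,q'}$ into $\cG_{n,r}$ with $nr = \log^{1.1}n > nq'$, use $d_{(1)}(\cG_{n,r}) = (1+o(1))nr$ (valid since $nr \gg \log n$), and conclude $d_{(1)}(\cG_{n,q'}) \leq (1+o(1))\log^{1.1}n$ deterministically under the coupling. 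This crude upper bound is all Theorem \ref{thmversh} needs, and it avoids any delicate analysis of the maximum degree at density $\Theta(\log n/n)$ — a subtlety the paper explicitly flags as the reason for the threshold $\log^{1.1}n$ in the definition \eqref{proxy}. You should replace your sliver-trimming and Feige--Ofek patch with this coupling argument.
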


\begin{proof} To begin with suppose  $\sqrt{\log n/\log \log n} \ll nq' <  \log^{0.9}n$. In this case, by Lemma \ref{lem:degreep}(c), $d_{(1)}(\cG_{n, q'})=(1+o(1)) L_{q'}$ and $L_{q'} = \Theta( \log n/\log \log n)$. Then, by Theorem \ref{thmversh}, applied with $\cG_{n, p}^{-}=\cG_{n, p}$, $\Q(\cE)\ge \frac{1}{2}$. 
Next, suppose $\log^{0.9}n\le  nq'\le \log n$. Here, by a simple coupling argument and using  $d_{(1)}(\cG_{n, r})=(1+o(1))nr$, for $nr \geq \log^{1.1} n$ (which can be proved by using standard binomial concentration  (cf. \cite{KS})), it follows that $d_{(1)}(\cG_{n, q'}) \leq (1+o(1)) \log^{1.1} n$. Theorem \ref{thmversh} then implies $\Q(\cE)\ge \frac{1}{2}$. (Note that the reason behind our definition in \eqref{proxy}, in the case $\log^{0.9} n\le nq'\le \log n$ is that the behavior of $d_{(1)}(\cG_{n, q'})$ as $nq'=c\log n$ for different values of $c$ is a bit delicate exhibiting a transition from small to large values of $c.$ Though this can be pinned down using arguments similar to \eqref{eq:deltap_I}, or using the more precise results in \cite{edge_knowles}, the crude bound in \eqref{proxy} suffices for our purpose.)
\end{proof}

Now, observe that the hypothesis $n q \gg \sqrt{\log n/\log \log n} $ and triangle inequality for the spectral norm imply $\cE\subset \mathrm{LT}(q).$ Thus, 
\begin{align}\label{tilt}
\P(\mathrm{LT}(q))\ge  \P(\cE) = \int_{\cE} e^{-\log \frac{\mathrm d \Q}{\mathrm d \P}} \mathrm d \Q = \Q(\cE) \left(\frac{1}{\Q(\cE)} \int_{\cE} e^{-\log \frac{\mathrm d \Q}{\mathrm d \P}} \mathrm d \Q \right). 
\end{align}
Next, note that for $G \in \sG_n$ with adjacency matrix $A(G)=((a_{ij}))_{1 \leq i, j \leq n}$, 
$$\log \frac{\mathrm d \Q}{\mathrm d \P}(G)- \binom{n}{2}I_{p}(q')=\kappa(q',p)\sum_{1\le i < j \le n}(q'-a_{ij}),$$
where $\kappa(q',p)=\log \frac{1-q'}{1-p}+\log \frac{p}{q'}\le \log \left(\frac{2p}{q'}\right),$ which under our hypothesis is a constant.
Then as in \cite{CD18}, we use the bound $$\sum_{1\le i < j \le n}(q'-a_{ij})=\frac{1}{2} {\bm 1}^{\top}(q'{J}_n-A(\cG_{n,q'})){\bm 1} \le \frac{n}{2}\|q'{J}_n-A(\cG_{n,q'})\|\le n\alpha (q'),$$
where $\alpha(q')=\sqrt{ \log n / \log\log n }$ when $\sqrt{\log n/\log \log n} \ll nq' \le \log^{0.9}$, and $\alpha(q')=\sqrt{\log^{1.1} n}$ when $\log^{0.9} \leq n q' \leq \log n$. Thus, on $\cE,$ we get
$\log \frac{\mathrm d \Q}{\mathrm d \P}(G)- \binom{n}{2}I_{p}(q')\le n\alpha(q').$ Plugging this into \eqref{tilt} yields $\P(\mathrm{LT}(q))\ge \frac{1}{2}e^{-{n \choose 2} I_p(q')-n\alpha(q')}.$
A simple calculation yields $I_{p}(q')= (1+O(\e))I_{p}(q),$ where the last equality holds uniformly over all small $\e,$ and $p,q\le 1/2$ where $q/p$ is a fixed constant. Finally, the observation that $I_{p}(q) = \Theta(q) = \Theta(p)$ and $np \gg \alpha(p)$, for all $\sqrt{\log n/\log \log n}\ll np \le \log n$,  which follows by definition, implies 
$$e^{-{n \choose 2} I_p(q')-n\alpha(q')}\ge e^{-(1+O(\e)){n \choose 2} I_p(q)},$$ which completes the proof since $\e$ was arbitrary. 
\end{document}